\theoremstyle{plain}
\newtheorem {Lem}{Lemma}%[section]
\newtheorem {The}{Theorem}%[Lem]
\theoremstyle{remark}
\theoremstyle{definition}
\newtheorem {lemma}{Lemma}[section]
\newtheorem {fragment}[lemma]{}
\newtheorem {prob}{Problem}
\newtheorem {deff}[lemma]{Definition}
\newif\ifcomm
\let\ifcomm\iffalse
\newcommand\supb[1]{{}^{\fboxsep1pt\fbox{$\scriptstyle\mskip1mu#1\mskip-1mu$}}}
\def\Sp{\operatorname{Sp}}
\def\Ep{\operatorname{Ep}}
\def\SL{\operatorname{SL}}
\def\GL{\operatorname{GL}}
\def\GSp{\operatorname{GSp}}
\def\Max{\operatorname{Max}}
\def\Ann{\operatorname{Ann}}
\def\Hom{\operatorname{Hom}}
\def\Rad{\operatorname{Rad}}
\def\EO{\operatorname{EO}}
\def\diag{\operatorname{diag}}
\def\sic{\text{\rm sc}}
\def\ad{\text{\rm ad}}
\def\map{\longrightarrow}
\def\bar{\overline}
\def\rk{\operatorname{rk}}
\def\epsilon{\varepsilon}
\def\e{\varepsilon}
\def\a{\alpha}
\newcommand{\al}{\alpha}
\def\b{\beta}
\def\g{\gamma}
\def\Nat{{\Bbb N}}
\def\Int{{\Bbb Z}}
\newcommand{\ma}{\mathfrak{a}}
\newcommand{\mb}{\mathfrak{b}}
\newcommand{\mc}{\mathfrak{c}}
\newcommand{\md}{\mathfrak{d}}
\newcommand{\mm}{\mathfrak{m}}
\newcommand{\mq}{\mathfrak{q}}
\def\A{\operatorname{A}}
\def\B{\operatorname{B}}
\def\C{\operatorname{C}}
\def\D{\operatorname{D}}
\def\F{\operatorname{F}}
\def\G{\operatorname{G}}
\def\E{\operatorname{E}}
\title[Relative Commutator Calculus]{Relative commutator calculus\\
in Chevalley groups}
\author{Roozbeh Hazrat}
\address{School of Computing, Engineering and Mathematics, University of Western Sydney, Australia}
\email{r.hazrat@uws.edu.au}
\author{Nikolai Vavilov}
\address{Department of Mathematics and Mechanics,
St.-Petersburg State University, St,-Petersburg, Russia}
\email{nikolai-vavilov@yandex.ru}
\author{Zuhong Zhang}
\address{Department of  Mathematics, Beijing Institute of Technology, Beijing, China}
\email{zuhong@gmail.com}
\begin{document}

\begin{abstract}
We revisit localisation and patching method in the setting of
Chevalley groups. Introducing certain subgroups of relative elementary
Chevalley groups, we develop relative versions of the conjugation
calculus and the commutator calculus in Chevalley groups
$G(\Phi,R)$, $\rk(\Phi)\geq 2$, which are
both more general, and substantially easier than the ones available
in the literature. For classical groups such relative commutator
calculus has been recently developed by the authors in \cite{RZ,RNZ}.
As an application we prove the mixed commutator formula,
\[ \big [E(\Phi,R,\ma),G(\Phi,R,\mb)\big ]=\big [E(\Phi,R,\ma),E(\Phi,R,\mb)\big], \]
for two ideals $\ma,\mb\unlhd R$. This answers a problem posed in a
paper by Alexei Stepanov and the second author.}

\end{abstract}

\maketitle

\setfarsi
\novocalize

\begin{arabtext}
br rhgo_daram hzAr jAa dAm nhy\\ 
\, gUeyey keh begeyramt agar gAm nhy \\ 
\, xeyAm
\end{arabtext}

\medskip

\begin{flushright}
{\it O Life, you put thousand traps in my way\\
Dare to try, is what you clearly say\\}
Omar Khayam
\end{flushright}

%%%%%%%%%%%%%%%%%%%%%%%%%%%%%%%%%%%%%%%%%%%%%%%%%%%%

%In the present paper, which is a sequel of \cite{RZ,RNZ}, we develop
%a relative version of localisation and patching in the setting
%of Chevalley groups $G(\Phi,R)$, and apply it to prove the relative
%standard commutator formula.

%%%%%%%%%%%%%%%%%%%%%%%%%%%%%%%%%%

\section{Introduction}

One of the most powerful ideas in the study of groups of points
of reductive groups over rings is localisation. It allows to reduce
many important problems over arbitrary commutative rings, to
similar problems for semi-local rings. Localisation comes in a
number of versions. The two most familiar ones are {\bf localisation
and patching}, proposed by Daniel Quillen \cite{Qu}
and Andrei Suslin \cite{Sus},
and {\bf localisation--completion}, proposed by Anthony Bak \cite{B4}.
\par
Originally, the above papers addressed the case of the general linear
group $\GL(n,R)$. Soon thereafter, Suslin himself, Vyacheslav Kopeiko,
Marat Tulenbaev, Giovanni Taddei, Leonid Vaserstein, Li Fuan,
Eiichi Abe, You Hong, and others
proposed working versions of localisation and patching for other
classical groups, such as symplectic and orthogonal ones, as well
as exceptional Chevalley groups, see, for example,
\cite{kopeiko,taddei,V1,Li1,Li2,VY} and further references in
\cite{NV91,BV3,SV,RN}. Recently, these methods were further
generalised to unitary groups, and isotropic reductive groups,
by Tony Bak, Alexei Stepanov, ourselves, Victor Petrov,
Anastasia Stavrova, Ravi Rao, Rabeya Basu, and others
\cite{BBR,Basu,BRN,BS,BV2,BV3,BRK,RH,RH2,RN1,P1,petrov2,petrov3,PS08,
stavrova,SV10}
\par
As a matter of fact, both methods rely on a large body of common
calculations, and technical facts, known as {\bf conjugation calculus}
and {\bf commutator calculus}. Their objective is to obtain explicit
estimates of the modulus of continuity in $s$-adic topology for
conjugation by a specific matrix, in terms of the powers of $s$
occuring in the denominators of its entries, and similar estimates
for commutators of two matrices.
\par
These calculations are {\it elementary\/},
in the strict technical sense of \cite{vavplot}. But being elementary,
they are by no means easy. Sometimes these calculations are even called
the {\bf yoga of conjugation}, and the {\bf yoga of commutators},
to stress the overwhelming feeling of technical strain and exertion.
\par
A specific motivation for the present work was the desire to create
tools to prove {\it relative\/} versions of structure results for Chevalley
groups. Here we list three such immediate applications, in which we were
particularly interested.
\par
\smallskip
$\bullet$ Description of subnormal subgroups and subgroups normalised
by the relative elementary subgroup. In full generality such description
is only available for classical groups \cite{ZZ,ZZ1,ZZ2,youhong}, but,
apart from the case of $\GL(n,R)$
\cite{wilson72,bak82,vaser86,LL,vavilov90,vaser90}, sharp bounds are not
obtained even in this case.
\par\smallskip
$\bullet$ Results on description of intermediate subgroups, such as,
for example, overgroups of regularly embedded semi-simple subgroups,
overgroups of exceptional Che\-valley groups in an appropriate $\GL(n,R)$,
etc., see, for example, \cite{luzgarev2004,luzgarev2008,SVY} and
\cite{LV,NV95,VSsamara} for a survey and further references.
\par\smallskip
$\bullet$ Generalisation of the mixed commutator formula
$$ \big[E(n,R,\ma),\GL(n,R,\mb)\big]=\big[E(n,R,\ma),E(n,R,\mb)\big], $$
\par\noindent
to exceptional Chevalley groups.
\par
The first two problems are discussed in somewhat more detail in the last
section, complete proofs are relegated to subsequent papers by the authors.
Here we discuss only the third one, relative standard commutator formulae,
another major objective of the present paper, apart from developing the
localisation machinery itself.
\par
The above formula was proved in the setting of general linear groups by
Alexei Stepanov and the second author~\cite{VS8}. This formula is a common
generalisation of both absolute standard commutator formulae.
At the stable level, absolute commutator formulae were first established
in the foundational work of Hyman Bass~\cite{Bass2}.
In another decade, Andrei Suslin, Leonid Vaserstein, Zenon Borewicz,
and the second author \cite{Sus,V1,BV85,SV} discovered that for
commutative rings similar formulae hold for all $n\geq 3$.
For two relative subgroups such formulae were proven only at the
stable level, by Alec Mason \cite{Mason74} -- \cite{MAS3}.
\par
However, the proof in \cite{VS8} relied on a {\it very\/} strong
and precise form of decomposition of unipotents \cite{SV}, and was
not likely to easily generalise to groups of other types. Stepanov
and the second-named author raised the following problems.
\par\smallskip
$\bullet$ Establish the relative standard commutator formula via
localisation method \cite[Problem~2]{VS8}.
\par\smallskip
$\bullet$ Generalise the relative standard commutator formula to
Bak's unitary groups and to Chevalley groups \cite[Problem~1]{VS8}.
\par\smallskip
In the paper \cite{RZ} the first and the third authors developed relative
versions of conjugation calculus and commutator calculus in the
general linear group $\GL(n,R)$, thus solving \cite[Problem~2]{VS8}. In
\cite{RNZ} we developed a similar relative conjugation calculus in Bak's
unitary groups, thus accounting for all {\it even\/} classical groups.
\par
In the present paper, which is a direct sequel of \cite{RZ,RNZ}, we
in a similar way evolve relative conjugation calculus and commutator
calculus in arbitrary Chevalley groups.
%%% and, in particular, completely
%%% solve \cite[Problem~1]{VS8} and \cite[Problem 4]{BRN}.
Actually, the present
paper does not depend on the calculations from \cite{RN1,SV10}. Instead,
here we develop {\it relative\/} versions of the yoga of conjugation,
and the yoga of commutators {\it from scratch\/}, in a more general setting.
The reason is that in the relative setting it is not enough to prove
the continuity of conjugation by $g$. What we now need, is its
{\it equi-continuity\/}  on all congruence subgroups $G(\Phi,R,I)$.
In other words, we need explicit bounds for the modulus of continuity,
{\it uniform\/} in the ideal $I$.
The resulting versions of conjugation calculus and commutator calculus
are both {\it substantially\/} more powerful, and {\it easier\/} than
the ones available in the literature.
\par
The overall scheme is always the same as devised by the first and the
second authors in \cite{RN1} (which, in turn, was a further
elaboration of \cite{B4,RH,RH2}), and as later implemented by Alexei Stepanov
and the second author \cite{SV10} in a slightly more precise version,
with length bounds. However, we propose several major technical
innovations, and simplifications. Most importantly, following \cite{RZ}
and \cite{RNZ} we construct another base of $s$-adic neighbourhoods
ot 1, consisting of {\it partially\/} relativised elementary
groups, and prove all results not at the absolute, but at the
relative level.
\par
As an immediate application of our methods we prove the following
result which, together with \cite{RNZ}, solves \cite[Problem 1]{VS8}
and \cite[Problem 4]{BRN}. Specifically, for Chevalley groups the
same question was reiterated as \cite[Problem 6]{RNZ}. Definitions
of the elementary subgroup $E(\Phi,R,\ma)$ and the full congruence
subgroup $C(\Phi,R,\ma)$ of level $\ma\unlhd R$ are recalled in
\S\S~3,~4.
\begin{The}\label{main}
Let\/ $\Phi$ be a reduced irreducible root system,\/ $\rk(\Phi)\ge 2$.
Further, let\/ $R$ be a commutative ring, and\/ $\ma,\mb\unlhd R$ be
two ideals of\/ $R$. In the cases\/ $\Phi=\C_2,\G_2$ assume
that\/ $R$ does not have residue fields\/ ${\Bbb F}_{\!2}$ of\/ $2$
elements and in the case\/ $\Phi=\C_l$, $l\ge 2$, assume additionally
that any\/ $c\in R$ is contained in the ideal\/ $c^2R+2cR$. Then
$$ [E(\Phi,R,\ma),C(\Phi,R,\mb)]=[E(\Phi,R,\ma),E(\Phi,R,\mb)]. $$
\end{The}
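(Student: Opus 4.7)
The inclusion $[E(\Phi,R,\ma),E(\Phi,R,\mb)]\subseteq[E(\Phi,R,\ma),C(\Phi,R,\mb)]$ is immediate from $E(\Phi,R,\mb)\subseteq C(\Phi,R,\mb)$, so the content lies in the reverse inclusion. Set $H=[E(\Phi,R,\ma),E(\Phi,R,\mb)]$. The plan is to show that each generating commutator $[z,g]$ with $z$ an elementary generator of $E(\Phi,R,\ma)$ and $g\in C(\Phi,R,\mb)$ lies in $H$. A preliminary reduction brings $z$ to a single root element $x_\alpha(\xi a)$ with $\alpha\in\Phi$ and $a\in\ma$: products unravel via $[z_1z_2,g]={}^{z_1}[z_2,g]\cdot[z_1,g]$ once $H$ is known to be stable under $E(\Phi,R,\ma)$-conjugation, a normality statement that is itself an output of the relative conjugation calculus developed in the body of the paper.

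The heart of the argument is the localisation--patching machinery in its partially relativised form. I would choose a finite family $s_1,\ldots,s_N\in R$ with $s_1R+\cdots+s_NR=R$ and verify, at each principal localisation $R_{s_i}$, that the image of $[x_\alpha(\xi a),g]$ can be rewritten as a product of images of elements of $H$ with uniformly bounded $s_i$-adic denominators; the patching step then glues these local identities into an identity in $G(\Phi,R)$. Over the localisation $R_s$ the image of $g$ is central modulo $\mb R_s$ and admits an $s$-adic approximation by elementary elements of level $\mb R_s$. The relative conjugation calculus delivers equi-continuous estimates, uniform in the ideals $\ma$ and $\mb$, for the conjugation action of such an approximation on $x_\alpha(\xi a)$, with output landing inside the partially relativised elementary subgroups that, by construction, form a base of $s$-adic neighbourhoods of $1$. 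The relative commutator calculus then rewrites $[x_\alpha(\xi a),g]$ as a bounded product of commutators $[x',g']$ with $x'\in E(\Phi,R,\ma)$ and $g'\in E(\Phi,R,\mb)$, i.e.\ of elements of $H$.

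The hard part is exactly this uniformity. Ordinary conjugation calculus only yields continuity of conjugation by a fixed $g$, which is what is needed for the absolute standard commutator formula; here one instead needs the modulus of continuity to be \emph{independent} of the ideal $\ma$ in which the root parameter $a$ lives, and this forces the redevelopment of the yoga in the partially relativised setting that occupies the bulk of the paper. The exceptional hypotheses on $\Phi=\C_l$ and $\Phi=\G_2$ with residue field $\mathbb{F}_2$ are orthogonal to the localisation package: they enter only through the base Chevalley commutator identities for short root subgroups that are invoked, repeatedly, inside this relative calculus.
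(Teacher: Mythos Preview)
Your outline follows the localisation route of \S\S7--9, and the broad strokes are right: reduce to a single root element against $g$, pass to localisations where $g$ factors as elementary times torus, use the relative commutator calculus to land in $H$, then patch via a partition of unity. Two corrections, however, matter.

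First, your final paragraph has the dependence backwards. The localisation proof as carried out in the paper requires the \emph{stronger} hypothesis $2\in R^*$ when $\Phi=\C_2,\G_2$, not merely the absence of $\mathbb{F}_{\!2}$ as a residue field; this invertibility is used precisely inside the commutator calculus (the Case~1 analysis in \S8) to force certain root unipotents to be single commutators of the required shape. So the exceptional hypotheses are not orthogonal to the localisation package---if anything, the localisation argument needs more than Theorem~1 assumes, and the paper explicitly says so.

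Second, the proof that delivers Theorem~1 under the \emph{exact} stated hypotheses is the short \S10 argument, which bypasses localisation entirely. One writes $E(\Phi,R,\ma)=[E(\Phi,R),E(\Phi,R,\ma)]$ and applies the three subgroup lemma to
\[
\big[E(\Phi,R,\ma),C(\Phi,R,\mb)\big]=\Big[\big[E(\Phi,R),E(\Phi,R,\ma)\big],C(\Phi,R,\mb)\Big].
\]
The first resulting factor is $[E(\Phi,R,\ma),[E(\Phi,R),C(\Phi,R,\mb)]]=[E(\Phi,R,\ma),E(\Phi,R,\mb)]$ by the \emph{absolute} standard commutator formula. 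The second factor lies in $[E(\Phi,R),G(\Phi,R,\ma\mb)]=E(\Phi,R,\ma\mb)\le H$ by the level bounds (Lemmas~17 and~19). The $\C_l$ and residue-field hypotheses enter only through the elementary root computations in Lemma~17, with no localisation needed.

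So your approach is one of the two the paper gives, but it proves a slightly weaker statement under a slightly stronger hypothesis; the sharp form comes from the elementary reduction to the absolute case.
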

Actally, as we shall see in \S~10, this commutator formula is
equivalent to a slightly weaker formula
$$ [E(\Phi,R,\ma),G(\Phi,R,\mb)]=[E(\Phi,R,\ma),E(\Phi,R,\mb)]. $$
\noindent
Before, for exceptional groups this theorem was known in the
two special cases\footnote{Actually, after submitting the present paper
we learned a very important paper by You Hong \cite{you92}, which
contains essentially the same result, with a proof very close in
spirit to our {\it second\/} proof here. A slight technical difference
is that \cite{you92} relies on straightforward commutator identities
for individual elements, whereas we invoke the three subgroup lemma,
which makes the argument slightly shorter and more transparent.
Also, as too many other publications, \cite{you92} contains a minor
inaccuracy --- one of the {\it hazar\/} = {\it thousand\/} traps, of
which Omar speaks! --- in that an extra condition is imposed only in
the case $\C_2$, whereas it is requisite for all $\C_l$, $l\ge 2$.
{\it Fortunately\/}, we were not aware of \cite{you92}, when writing
\cite{RZ,RNZ} and the present paper. Otherwise, we would had been much
less eager to develop a {\it localisation\/} approach towards the
proof of Theorem 1. We are convinced that the main contribution of the
present paper are the relative versions of conjugation calculus,
commutator calculus, and patching, developed in \S\S~7--9. They already
have several further important applications, which go well beyond
Theorem 1 or the main results of \cite{RN1,BRN} and \cite{SV10}.},
where $R=\ma$ or where $R=\mb$, see \cite{taddei,vaser2}.
\par
With the above precise condition Theorem 1 is proven in
\S~10. Our localisation proof in \S~9 requires a somewhat stronger
condition $2\in R^*$, in the cases $\Phi=\C_2,\G_2$. Strictly speaking,
this stronger condition is not necessary, we only use it to simplify the
proof of the induction base of the relative commutator calculus in \S~8.
\par
This small compromise allows us to spare some 5--6 pages of
calculations, and to eventually develop a more technical and powerful
version of relative localisation with {\it two\/} denominators.
As a matter of fact, the main result of the present paper is not
the above Theorem 1 itself, but rather Theorem 2 established in \S~9.
Theorem 2 looks too technical to stand well on its own, but actually
it is {\it terribly\/} much stronger and more general than Theorem 1.
It is devised to be used in our subsequent publications to derive
{\it multiple\/} commutator formulae, which are simultaneous
generalisations of Theorem 1 and nilpotency of $K_1$.
\par
However, not to further complicate things, we decided to relegate the
detailed analysis of the rank 2 cases to a subsequent publication,
especially that it should be carried in a more general setting,
much more technically demanding. In the meantime, let us explain,
why the rank 2 case, namely the types $\C_2$ and $\G_2$, require
some serious extra care. This is due to the following circumstances.
\par\smallskip
$\bullet$ In these cases, the elementary group $E(\Phi,R)$ is not
perfect when $R$ has residue field\/ ${\Bbb F}_{\!2}$, which accounts
for the first assumption in Theorem 1.
\par\smallskip
$\bullet$ There is {\it substantially\/} less freedom in the Chevalley
commutator formula, especially for groups of type $\C_2$, which accounts
for the additional assumption in this case.
\par\smallskip
$\bullet$ There is somewhat less freedom also in the choice of
semi-simple factors.
\par\smallskip
$\bullet$ Most importantly, in these cases it is natural to define
relative subgroups not in terms of ideals, but in terms of
form ideals, or even more general structures, such as radices
\cite{costakeller1,costakeller2}.
\par\smallskip
As in \cite{RNZ}, in the present paper we concentrate on actual
calculations. The history of localisation methods, the philosophy behind
them, and their possible applications are extensively discussed in our
mini-survey with Alexei Stepanov \cite{yoga}. There, we also describe
another remarkable recent advance, {\it universal\/} localisation
developed by Stepanov \cite{step}. For {\it algebraic\/} groups,
universal localisation allows --- among other things --- to remove
dependence on the dimension of the ground ring $R$ in the results of
\cite{SV10}. Unfortunately, generalised unitary groups are not always
algebraic, so that our width bounds for commutators in unitary
groups \cite{HSVZ} still depend on $\dim(\Max(R))$.
\par
The paper is organised as follows. In \S\S~2--4 we recall basic
notation, and some background facts, used in the sequel.
In \S~5 we discuss injectivity of localisation homomorphism
and in \S~6 we calculate levels of mixed commutator subgroups.
The next two sections constitute the technical core of the paper.
Namely, in \S~7, and in \S~8 we develop relative conjugation
calculus, and relative commutator calculus in Chevalley groups,
respectively. After that we are in a position to give a localisation
proof of Theorem~1 --- and in fact of a much stronger Theorem 2 ---
in \S~9. On the other hand, using level calculations in \S~10 we
give another proof of Theorem~1, deducing it from the {\it absolute\/}
standard commutator formula. There we also obtain slightly more
precise results in some special situations, such as Theorem 3,
which completely calculates the relative commutator subgroup in the
important case, where $\ma$ and $\mb$ are comaximal, $\ma+\mb=R$.
Finally, in \S~11 we state and briefly review some further related
problems.

%%%%%%%%%%%%%%%%%%%%%%%%%%%%%%%%%%%%%%%%%%%%%%%%%%%%%%%%%%%%%%%%%%%%%%%%

\section{Chevalley groups}

As above, let $\Phi$ be a reduced irreducible root system of
rank $l=\rk(\Phi)$, and $P$, $Q(\Phi)\le P\le P(\Phi)$ be a
lattice between the root lattice $Q(\Phi)$ and the weight
lattice $P(\Phi)$. Usually, we fix an order on $\Phi$
and denote by $\Pi=\{\a_1,\ldots,\a_l\}$ $\Phi^+$, $\Phi^-$
the corresponding sets of fundamental, positive, and negative
roots, respectively. Recall, that
$Q(\Phi)=\Int\a_1\oplus\ldots\oplus\Int\a_l$
and $P(\Phi)=\Int\varpi_1\oplus\ldots\oplus\Int\varpi_l$,
where $\varpi_!,\ldots,\varpi_l$ are the corresponding
fundamental weights. Finally, $W=W(\Phi)$ denotes the Weyl group
of $\Phi$.
\par
Further, let $R$ be a commutative ring. We denote by
$G=G_P(\Phi,R)$ the Chevalley group of type $(\Phi,P)$ over $R$,
by $T=T_P(\Phi,R)$ a split maximal torus of $G$ and by
$E=E_P(\Phi,R)$ the corresponding (absolute) elementary
subgroup. Usually $P$ does not play role in our calculations
and we suppress it in the notation.
\par
The elementary group $E(\Phi,R)$ is generated by all root
unipotents $x_{\alpha}(a)$, $\alpha\in\Phi$, $a\in R$,
elementary with respect to $T$. The fact that $E$ is normal
in $G$ means exactly that $E$ does not depend on the choice of $T$.
\par
Let $G$ be a group. For any $x,y\in G$,  $^xy=xyx^{-1} $  denotes the
left $x$-conjugate of $y$. Let  $[x,y]=xyx^{-1}y^{-1}$ denote the
commutator of $x$ and $y$. We will make frequent use of the
following formulae,
\par\smallskip
(C1) $[x,yz]=[x,y]\cdot{}^y[x,z]$,
\par\smallskip
(C2) $[xy,z]={}^x[y,z]\cdot[x,z]$,
\par\smallskip
(C3) Hall---Witt identity
$$ {}^x\bigl[[x^{-1},y],z\bigr]={}^x\bigl[[y,x^{-1}]^{-1},z\bigr]=
{}^y\bigl[x,[y^{-1},z]\bigr]\cdot{}^z\bigl[y,[z^{-1},x]\bigr], $$
\par
(C4) $[x,{}^yz]={}^y[{}^{y^{-1}}x,z]$,
\par\smallskip
(C5) $[{}^yx,z]={}^{y}[x,{}^{y^{-1}}z]$.
\par\smallskip
Most of the calculations in the present paper
are based on the Steinberg relations
\par\smallskip
(R1) Additivity of $x_{\a}$,
$$ x_{\a}(a+b)=x_{\a}(a)x_{\a}(b). $$
\par
(R2) Chevalley commutator formula
$$ [x_{\al}(a),x_{\beta}(b)]=\prod_{i\al+j\beta\in \Phi}
x_{i\al+j\beta}(N_{\al\beta ij}a^ib^j), $$
\noindent
where $\al\not=-\beta$ and $N_{\al\beta ij}$ are the structure constants
which do not depend on $a$ and $b$. Notice, though, that for $\Phi=\G_2$
they may depend on the order of the roots in the product on the right
hand side. The following
observation was made by Chevalley himself: let $\al-p\beta,\ldots,\al
-\beta,\al,\al+\beta,\ldots,\al+q\beta$ be the $\al$-series of roots
through $\beta$, then $N_{\al\beta 11}=\pm(p+1)$ and
$N_{\al\beta12}=\pm(p+1)(p+2)/2$.
\par\smallskip
Let $i_{\Phi}$ be the largest integer which may appear as $i$ in a
root $i\al+j\beta\in\Phi$ for all $\al,\beta\in\Phi$. Obviously
$i_{\Phi}=1,2$ or 3, depending on whether $\Phi$ is simply laced,
doubly laced or triply laced.  The following result makes the proof
for $\Phi\neq C_l$ slightly easier than for the symplectic
case. Recall that $\A_1=\C_1$ and $\B_2=\C_2$ so that root systems of
types $\A_1$ and $\B_2$ {\it are\/} symplectic. All roots of $\A_1$ are
long.
\par
Our calculations in \S~7 and \S~8 rely on the following result, which
is Lemma~2.12 in \cite{RN1}.
\begin{Lem}\label{lem1}
Let\/ $\beta\in\Phi$ and either\/ $\Phi\neq\C_l$ or\/ $\beta$
is short. Then there exist two roots\/ $\gamma,\delta\in\Phi$ such
that\/ $\beta=\gamma+\delta$ and\/ $N_{\gamma\delta11}=1$.
\par
If\/ $\Phi=\C_l$, $l\ge 2$, and\/ $\beta$ is long, then there exist two
roots\/ $\gamma,\delta\in\Phi$ such that either\/ $\beta=\gamma+2\delta$
and\/ $N_{\gamma\delta12}=1$, or\/ $\beta=2\gamma+\delta$
and\/ $N_{\gamma\delta21}=1$.
\end{Lem}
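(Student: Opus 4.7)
The proof rests on Chevalley's classical observation recalled just before the statement: $N_{\gamma\delta 11}=\pm(p+1)$ and $N_{\gamma\delta 12}=\pm(p+1)(p+2)/2$, where $-p,\ldots,q$ index the $\gamma$-series of roots through $\delta$. Hence for the first assertion it suffices to find $\gamma,\delta\in\Phi$ with $\beta=\gamma+\delta$ and $\delta-\gamma\notin\Phi$ (which forces $p=0$ and $|N_{\gamma\delta 11}|=1$), and then to arrange the sign to be $+1$. For the second assertion the analogous criterion is $p=0$ in the $\gamma$-series through the chosen $\delta$.

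The existence of such decompositions is a purely combinatorial statement about root systems, which I would verify case by case. By Weyl-transitivity on roots of a fixed length one may reduce to one representative per orbit in each $\Phi$ of rank $\geq 2$. For simply-laced $\Phi$ (types $\A_l,\D_l,\E_l$) any root $\beta$ admits a simple root $\alpha$ with $\langle\beta,\alpha^\vee\rangle=1$; then $\beta-\alpha\in\Phi$, the $\alpha$-string through $\beta-\alpha$ has length two, and $\gamma=\alpha$, $\delta=\beta-\alpha$ work. For the doubly-laced and triply-laced types $\B_l,\F_4,\G_2$, and for short roots of $\C_l$, an analogous explicit splitting in standard coordinates is available; for instance a short root $e_i+e_j$ of $\C_l$ splits as $(e_i+e_k)+(e_j-e_k)$ with $k\neq i,j$, and the difference $(e_j-e_k)-(e_i+e_k)=-(e_i-e_j)-2e_k$ is not a root. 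The genuinely exceptional case is a long root $\beta=2e_i$ of $\C_l$, which cannot be written as a sum of two roots with $p=0$, since every such splitting involves two short roots whose difference is long, forcing $|N|=2$. In this case one sets $\gamma=2e_j$, $\delta=e_i-e_j$, obtaining $\beta=\gamma+2\delta$; the $\gamma$-string through $\delta$ is $\delta,\delta+\gamma=e_i+e_j$, so $p=0$ and $|N_{\gamma\delta 12}|=1$.

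Finally, the sign is fixed by exploiting the antisymmetry $N_{\delta\gamma 11}=-N_{\gamma\delta 11}$, valid whenever $\delta-\gamma\notin\Phi$: if the chosen pair yields $-1$, one simply swaps $\gamma$ and $\delta$. For the long $\C_l$ case the same trick works, with the two alternative forms $\beta=\gamma+2\delta$ and $\beta=2\gamma+\delta$ in the statement absorbing the residual sign ambiguity. The main obstacle is purely one of bookkeeping: sign conventions in the Chevalley commutator formula must be applied consistently across all cases, and the representative-per-orbit argument must be executed uniformly. The higher-order structure-constant ambiguity peculiar to $\G_2$ (where the order of factors in the Chevalley commutator formula matters for terms with $i+j\geq 3$) is harmless here, since the lemma involves only the leading terms $N_{\gamma\delta 11}$ and $N_{\gamma\delta 12},N_{\gamma\delta 21}$.
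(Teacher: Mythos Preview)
The paper does not actually prove this lemma; it is quoted verbatim as Lemma~2.12 of \cite{RN1} and no argument is supplied. Your proposal therefore cannot be compared against a proof in the present paper, but it is the natural direct verification, and the strategy---reduce to $p=0$ in the relevant root string via Chevalley's formula, then fix the sign by swapping $\gamma$ and $\delta$ (using $N_{\delta\gamma 11}=-N_{\gamma\delta 11}$)---is exactly how such statements are established.

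One small gap: your illustrative decomposition of a short $\C_l$ root, $e_i+e_j=(e_i+e_k)+(e_j-e_k)$, requires a third index $k$ and so only works for $l\ge 3$. For $\C_2$ the short root $e_1+e_2$ must instead be split as a long root plus a short root, e.g.\ $\gamma=2e_1$, $\delta=-e_1+e_2$; then $\gamma-\delta=3e_1-e_2\notin\Phi$, so $p=0$ and $|N_{\gamma\delta 11}|=1$ as required. Similarly, your blanket claim for simply-laced systems that ``any root $\beta$ admits a simple root $\alpha$ with $\langle\beta,\alpha^\vee\rangle=1$'' fails when $\beta$ is itself simple, but this is harmless since you have already invoked Weyl transitivity to choose a convenient representative (e.g.\ a root of height $\ge 2$). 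With these two patches the case-by-case argument goes through.
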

In the sequel we also use semi-simple root elements. Namely,
for $\alpha\in\Phi$ and $\e\in R^*$ we set
$$ w_\a(t)=x_\a(t)x_{-\a}(-t^{-1})x_\a(t),\qquad
h_\a(t)=w_\a(t)w_\a(1)^{-1}. $$
\noindent
Let $H(\Phi,R)$ be the subgroup of $T(\Phi,R)$, generated by
all $h_{\a}(\e)$, $\a\in\Phi$, $\e\in R^*$.
\par
Clearly, $H(\Phi,R)\le E(\Phi,R)$, and in fact,
$H(\Phi,R)=T(\Phi,R)\cap E(\Phi,R)$. In particular, for simply
connected group one has
$$ H_{\sic}(\Phi,R)=T_{\sic}(\Phi,R)=\Hom(P(\Phi),R^*). $$
\noindent
For non simply connected groups, specifically, for the adjoint ones,
$T(\Phi,R)$ is usually somewhat larger, than $H(\Phi,R)$. For
the proof of our main theorem we have to understand, what the
generators of $T(\Phi,R)$ look like in this case, see
\cite{weight-elem} for explicit constructions and many further
references.
\par
Let $\omega\in P(\Phi^{\vee})$, by definition
$(\a,\omega)\in {\Bbb Z}$ for all $\a\in\Phi$. The adjoint torus
contains weight elements $h_{\omega}(\e)$, which commute with all
elements from $T$ and satisfy the following commutator relation:
$$ h_{\omega}(\e)x_{\a}(\xi)h_{\omega}(\e)^{-1}=
x_{\a}(\e^{(\a,\o)}\xi), $$
\noindent
for all $\a\in\Phi$ and all $\xi\in R$. For $\Phi=\E_8,\F_4$ and
$\G_2$, one has $P(\Phi)=Q(\Phi)$, in particular, in these cases
$T_{\ad}(\Phi,R)=H_{\ad}(\Phi,R)$. For other cases $T_{\ad}(\Phi,R)$
is generated by $H_{\ad}(\Phi,R)$ and some weight elements.
\par
In Section~\ref{hgfdsa} we can refer to either one of the following lemmas.
The first one follows from \cite{weight-elem}, Proposition 1, while
the second one is well-known and obvious.
\begin{Lem}\label{lem-weights}
The torus $T_{\ad}(\Phi,R)$ is generated by $H_{\ad}(\Phi,R)$
and weight elements $h_{\omega}(\e)$, where $\e\in R^*$, and
$\omega$ are the following weights
\par\smallskip
$\bullet$ $\omega=\varpi_1$, for\/ $\Phi=\A_,\B_l$ and $\E_6$,
\par\smallskip
$\bullet$ $\omega=\varpi_l$, for\/ $\Phi=\C_l$,
\par\smallskip
$\bullet$ $\omega=\varpi_1,\varpi_{l}$, for\/ $\Phi=\D_l$,
\par\smallskip
$\bullet$ $\omega=\varpi_7$, for\/ $\Phi=\E_7$.
\end{Lem}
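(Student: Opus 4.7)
The plan is to identify the split adjoint torus with a character group, which turns the statement into a purely combinatorial claim about representatives of $P(\Phi^{\vee})/Q(\Phi^{\vee})$; after this translation the lemma becomes \cite[Proposition~1]{weight-elem}. Since $X^*(T_{\ad})=Q(\Phi)$, one has $T_{\ad}(\Phi,R)=\Hom_{\Int}(Q(\Phi),R^*)$, and the weight element $h_{\omega}(\e)$ with $\omega\in P(\Phi^{\vee})$, $\e\in R^*$ corresponds to the character $\alpha\mapsto\e^{(\alpha,\omega)}$. Taking $\omega=\alpha^{\vee}$ recovers $h_{\alpha}(\e)$, and expanding any $\omega=\sum_{\alpha}n_{\alpha}\alpha^{\vee}\in Q(\Phi^{\vee})$ gives $h_{\omega}(\e)=\prod_{\alpha}h_{\alpha}(\e^{n_{\alpha}})\in H_{\ad}(\Phi,R)$, so $H_{\ad}(\Phi,R)$ is precisely the subgroup generated by the weight elements attached to coroots.

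Next I would use that the fundamental coweights $\varpi_1^{\vee},\ldots,\varpi_l^{\vee}$ form the $\Int$-basis of $P(\Phi^{\vee})$ dual to $\alpha_1,\ldots,\alpha_l$, so that $h_{\varpi_i^{\vee}}(\e)$ is the character sending $\alpha_i\mapsto\e$ and $\alpha_j\mapsto 1$ for $j\ne i$; consequently every $\chi\in T_{\ad}(\Phi,R)$ factors as $\prod_i h_{\varpi_i^{\vee}}(\chi(\alpha_i))$. Combined with the observation above, this reduces the lemma to the purely combinatorial statement that the images in $P(\Phi^{\vee})/Q(\Phi^{\vee})$ of the fundamental coweights $\varpi_i^{\vee}$ listed in the lemma generate that quotient. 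Indeed, if $\varpi_j^{\vee}=\sum_{i\in S}m_i\varpi_i^{\vee}+\omega_0$ with $\omega_0\in Q(\Phi^{\vee})$, then $h_{\varpi_j^{\vee}}(\e)=\prod_{i\in S}h_{\varpi_i^{\vee}}(\e^{m_i})\cdot h_{\omega_0}(\e)$ already lies in the group generated by $H_{\ad}$ and the listed weight elements.

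The main step is then a type-by-type verification against the standard tables for the fundamental group $P(\Phi^{\vee})/Q(\Phi^{\vee})\cong P(\Phi)/Q(\Phi)$. For $\E_8,\F_4,\G_2$ this quotient is trivial, so no weight elements are needed. For $\A_l,\B_l,\E_6$ it is cyclic of orders $l+1,2,3$ respectively, and a direct Bourbaki-coordinate check shows that $\varpi_1^{\vee}$ maps to a generator. For $\C_l$ and $\E_7$ the quotient is $\Int/2\Int$, generated by the class of $\varpi_l^{\vee}$ and $\varpi_7^{\vee}$ respectively. The only mildly delicate case is $\D_l$, where $P/Q$ is $(\Int/2\Int)^2$ for $l$ even and $\Int/4\Int$ for $l$ odd; in both situations the images of $\varpi_1^{\vee}$ and $\varpi_l^{\vee}$ generate, but for odd $l$ one must verify that $\varpi_l^{\vee}$ already has order $4$ in the quotient (while $\varpi_1^{\vee}$ is redundant but harmless), which is a brief coordinate computation. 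This last case is where I expect the only real, albeit very mild, obstacle to lie.
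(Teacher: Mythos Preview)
Your proposal is correct and follows essentially the same approach as the paper: the paper does not give an independent proof but simply states that the lemma follows from \cite[Proposition~1]{weight-elem}, and your argument is precisely the reduction one would use to establish that proposition---identify $T_{\ad}(\Phi,R)$ with $\Hom(Q(\Phi),R^*)$, observe that $H_{\ad}$ is the image of $Q(\Phi^{\vee})$, and then check type-by-type that the listed fundamental coweights generate $P(\Phi^{\vee})/Q(\Phi^{\vee})$. Your handling of the $\D_l$ case (distinguishing $l$ even from $l$ odd) is correct and is indeed the only place requiring any care.
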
\label{lem-conjug}
%% Clearly, one can replace $\omega$ here by any $w\omega$, where
%% $w\in W(\Phi)$. In Section 9 we will use the following
\begin{Lem}\label{lem-diagonal}
Assume that either $\Phi\neq\C_l$, or $a\in\Phi$ is short. Then
for any $\e\in R^*$ there exists an $h\in H(\Phi,R)$ such that
$hx_{\a}(\xi)h^{-1}=x_{\a}(\e\xi)$, for all $\xi\in R$.
\par
In the exceptional case, where $\Phi=\C_l$ and $a\in\Phi$ is long,
$hx_{\a}(\xi)h^{-1}=x_{\a}(\e^2\xi)$, for all $h\in H(\Phi,R)$.
On the other hand, if $\a\in\Phi^+$ is a positive long root,
$$ h_{\varpi_l}(\e)x_{\a}(\xi)h_{\varpi_l}(\e)^{-1}=x_{\a}(\e\xi). $$
\end{Lem}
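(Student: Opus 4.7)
The starting observation is the standard commutation identity
$$ h_\beta(\varepsilon)\, x_\alpha(\xi)\, h_\beta(\varepsilon)^{-1} = x_\alpha\bigl(\varepsilon^{\langle \alpha, \beta \rangle} \xi\bigr), \qquad \langle \alpha, \beta \rangle = \frac{2(\alpha, \beta)}{(\beta, \beta)}, $$
together with its weight-element analogue $h_\omega(\varepsilon)\, x_\alpha(\xi)\, h_\omega(\varepsilon)^{-1} = x_\alpha\bigl(\varepsilon^{(\alpha,\omega)} \xi\bigr)$ recorded in \S 2. Taking products in $\beta$, the set of scalar multipliers on $x_\alpha(\xi)$ realised by conjugation by elements of $H(\Phi, R)$ is exactly $(R^*)^d$, where $d$ is the positive generator of the additive subgroup of $\mathbb{Z}$ generated by the integers $\langle \alpha, \beta \rangle$, $\beta \in \Phi$. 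All three claims of the lemma therefore reduce to computing $d$ and, in the long-positive-root case of $\C_l$, to pairing $\alpha$ against a suitable coweight.

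For the first assertion the plan is to exhibit a single $\beta \in \Phi$ with $\langle \alpha, \beta \rangle = 1$, in which case $h = h_\beta(\varepsilon)$ does the job. This is a brief type-by-type verification in standard coordinates. For simply laced $\Phi$ of rank $\geq 2$, any $\beta$ at a $60^\circ$ angle to $\alpha$ gives $\langle \alpha, \beta \rangle = 1$, and such a $\beta$ exists in every irreducible simply laced system of rank at least two. For short $\alpha$ in $\B_l$, $\C_l$ or $\F_4$, a long root $\beta$ whose orthogonal projection onto $\alpha$ equals $\alpha$ itself gives $2(\alpha, \beta)/(\beta, \beta) = 1$; for instance $\alpha = e_1 - e_2$, $\beta = 2 e_1$ in $\C_l$, or $\alpha = e_1$, $\beta = e_1 + e_2$ in $\B_l$. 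For long $\alpha$ in $\B_l$ (with $l \geq 3$), $\F_4$ or $\G_2$, one picks a root of the same length from an $\A_2$-subsystem through $\alpha$, and the same pairing calculation produces $\langle \alpha, \beta \rangle = 1$.

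For the exceptional case I take $\alpha = 2 e_i$ in the usual coordinates on $\C_l$ and enumerate all Cartan integers $\langle \alpha, \beta \rangle = (\alpha, \beta^\vee)$: every coroot $\beta^\vee$ of $\C_l$ lies in $\mathbb{Z} e_1 \oplus \cdots \oplus \mathbb{Z} e_l$, so the factor $2$ in $\alpha$ forces $\langle \alpha, \beta \rangle \in \{0, \pm 2\}$; hence $d = 2$, and the explicit choice $h = h_\alpha(\varepsilon)$ (with $\langle \alpha, \alpha \rangle = 2$) realises the multiplier $\varepsilon^2$. For the final assertion I appeal to the weight element $h_{\varpi_l}(\varepsilon) \in T_{\ad}(\Phi, R)$ from Lemma \ref{lem-weights}. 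Here $\varpi_l \in P(\Phi^\vee) = P(\B_l)$ is the spin fundamental weight $(e_1 + \cdots + e_l)/2$, and a direct computation yields $(\alpha, \varpi_l) = 1$ for every long positive root $\alpha = 2 e_i$; the weight-element formula then delivers precisely $h_{\varpi_l}(\varepsilon)\, x_\alpha(\xi)\, h_{\varpi_l}(\varepsilon)^{-1} = x_\alpha(\varepsilon \xi)$, as required. The only mildly delicate part of the whole argument is the type-by-type case analysis for the first assertion; everything else is routine coordinate bookkeeping.
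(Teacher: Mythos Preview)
The paper does not actually prove this lemma: immediately before its statement the authors declare it ``well-known and obvious'' and move on. Your argument is exactly the standard verification one has in mind here --- use the conjugation rule $h_\beta(\varepsilon)x_\alpha(\xi)h_\beta(\varepsilon)^{-1}=x_\alpha(\varepsilon^{\langle\alpha,\beta\rangle}\xi)$, exhibit a $\beta$ with $\langle\alpha,\beta\rangle=1$ type by type, show all Cartan integers are even for long $\alpha$ in $\C_l$, and finish with the explicit pairing $(\alpha,\varpi_l)=1$ --- and it is correct.

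One small omission in your case list: short roots of $\G_2$ are not covered by either of your two non-simply-laced clauses as written. This is harmless, since the short roots of $\G_2$ also span an $\A_2$-subsystem (or, alternatively, an adjacent long simple root already gives $\langle\alpha,\beta\rangle=-1$), so the same sentence you wrote for long roots applies verbatim; but you should include it explicitly.
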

Clearly, in the last case for a negative long root one has
$h_{\varpi_l}(\e)x_{\a}(\xi)h_{\varpi_l}(\e)^{-1}=x_{\a}(\e^{-1}\xi)$.
In the vector representation of the {\it extended\/} simply connected
Chevalley group $\bar G(\C_l,R)=\GSp(2l,R)$ this weight element
has the form
$$ h_{\varpi_l}(\e)=\diag(\e,\ldots,\e,1,\ldots,1). $$
\par
It follows that --- with the only possible exception when $\Phi=\C_l$
and $\a$ is long --- for any $\a\in\Phi$ and any $h\in T(\Phi,R)$
there exists a $g\in H(\Phi,R)$ such that
$gx_{\a}(\xi)g^{-1}=hx_{\a}(\xi)h^{-1}$. In particular, $g^{-1}h$
commutes with $x_{\a}(\xi)$. However, in the exceptional case,
where $\Phi=\C_l$ and $\a$ is long, no such $g$ exists in general.
One can only ensure the existence of such a $g\in H(\Phi,R)$ that
$g^{-1}h=h_{\varpi_l}(\e)$ for some $\e\in R^*$.

%%%%%%%%%%%%%%%%%%%%%%%%%%%%%%%%%%%%%%%%%%%%%%%%%%%%%%%%%%%%%%%%%%%%%%%%%%%%%

\section{Relative elementary subgroups}

In this section we recall the definitions of relative subgroups,
and some basic facts used in the sequel. The usual one-parameter
relative subgroups are well known. However, for multiply laced
systems one should consider two-parameter relative subgroups,
with one parameter corresponding to short roots, and another one to
long roots. Such two-parameter relative subgroups were introduced
and studied by Eiichi Abe \cite{abe0}--\cite{abe1} and Michael Stein
\cite{stein2}.
\par
Let $\ma$ be an additive subgroup of $R$. Then $E(\Phi,\ma)$ denotes
the subgroup of $E$ generated by all elementary root unipotents
$x_{\al}(t)$ where $\al\in\Phi$ and $t\in\ma$.  Further, let $L$
denote a nonnegative integer and let $E^L(\Phi,\ma)$ denote the {\it
subset\/} of $E(\Phi,\ma)$ consisting of all products of $L$ or fewer
elementary root unipotents $x_{\al}(t)$, where $\al\in\Phi$ and
$t\in\ma$. In particular, $E^1(\Phi,\ma)$ is the set of all $x_{\al}(t)$,
$\al\in\Phi$, $t\in\ma$.
\par
When $\ma\trianglelefteq R$ is an ideal of $R$, the elementary group
$E(\Phi,\ma)$ of level $\ma$ should be distinguished from the
{\it relative\/} elementary subgroup $E(\Phi,R,\ma)$ of level $\ma$.
By definition $E(\Phi,R,\ma)$ is the normal closure of $E(\Phi,\ma)$
in the absolute elementary subgroup $E(\Phi,R)$. In general
$E(\Phi,R,\ma)$ is {\it not\/} generated by elementary
transvections of level $\ma$. Below we describe its generators for
$\rk(\Phi)\ge 2$. The following result can be found in
\cite{stein2,tits}.

\begin{Lem}\label{lemtits}
In the case\/ $\Phi\neq\C_l$ one has\/ $E(\Phi,\ma)\ge
E(\Phi,R,\ma^2)$. In the exceptional case\/ $\Phi=\C_l$ one
has\/ $E(\Phi,\ma)\ge E(\Phi,R,(2R+\ma)\ma^2)$.
\end{Lem}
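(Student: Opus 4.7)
The statement asserts $E(\Phi,R,I)\subseteq E(\Phi,\ma)$, where $I=\ma^{2}$ in the generic case and $I=(2R+\ma)\ma^{2}=2\ma^{2}+\ma^{3}$ when $\Phi=\C_l$. Since $E(\Phi,R,I)$ is by definition the normal closure of $E(\Phi,I)\subseteq E(\Phi,\ma)$ in $E(\Phi,R)$, the entire task reduces to checking that for every $\alpha\in\Phi$, every $t\in I$, every $\beta\in\Phi$, and every $r\in R$, the conjugate $x_{\beta}(r)\,x_{\alpha}(t)\,x_{\beta}(-r)$ stays in $E(\Phi,\ma)$; a straightforward induction on the length of a word representing a general $g\in E(\Phi,R)$ then upgrades this to full normality. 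Equivalently, one needs $[x_{\beta}(r),x_{\alpha}(t)]\in E(\Phi,\ma)$.

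The harmless case is $\beta\ne-\alpha$. The Chevalley commutator formula (R2) expands
\[ [x_{\beta}(r),x_{\alpha}(t)]=\prod x_{i\beta+j\alpha}(N_{\beta\alpha ij}\,r^{i}t^{j}), \qquad i,j\geq 1, \]
so since $t\in I\subseteq\ma$ each parameter $r^{i}t^{j}$ lies in $\ma$, and every factor is a generator of $E(\Phi,\ma)$.

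The interesting case is $\beta=-\alpha$, where (R2) does not apply directly. The plan is to rewrite $x_{\alpha}(t)$ itself, for $t$ a simple product, as a commutator of root elements supplied by Lemma \ref{lem1}, and then to conjugate that commutator by $x_{-\alpha}(r)$ via $g[x,y]g^{-1}=[gxg^{-1},gyg^{-1}]$, reducing each piece to the harmless case. Concretely, for $\Phi\ne\C_l$ or $\alpha$ short, Lemma \ref{lem1} provides $\alpha=\gamma+\delta$ with $N_{\gamma\delta 11}=1$; for $t=uv$ with $u,v\in\ma$, (R2) gives $[x_{\gamma}(u),x_{\delta}(v)]=x_{\alpha}(uv)\cdot h$, where $h$ is a product of factors $x_{\rho}(c_{\rho})$ with $\rho=i\gamma+j\delta$, $(i,j)\ne(1,1)$, and $c_{\rho}\in\ma^{i+j}\subseteq\ma$. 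Linear independence of $\gamma,\delta$ forces $\rho\ne\pm\alpha$, so conjugation by $x_{-\alpha}(r)$ of each of $x_{\gamma}(u)$, $x_{\delta}(v)$, and each $x_{\rho}(c_{\rho})$ lands in the harmless case and hence in $E(\Phi,\ma)$. Additivity of $x_{\alpha}$ in its argument then extends this from simple tensors to all of $\ma^{2}$.

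The main obstacle is the case $\Phi=\C_l$, $\alpha$ long, which is responsible for the enlarged ideal $(2R+\ma)\ma^{2}$. The difficulty is that one cannot decompose a long root as $\alpha=\gamma+\delta$ with both $\gamma,\delta$ short and $N_{\gamma\delta 11}=1$: the corresponding structure constant is $\pm 2$, and this produces the $2\ma^{2}$ contribution via $[x_{\gamma}(u),x_{\delta}(v)]=x_{\alpha}(\pm 2uv)$, which carries no higher terms. Lemma \ref{lem1} supplies instead a mixed decomposition $\alpha=\gamma+2\delta$ (or $2\gamma+\delta$) with $N_{\gamma\delta 12}=1$; the associated commutator $[x_{\gamma}(u),x_{\delta}(v)]$ picks up a short-root correction $x_{\gamma+\delta}(\pm uv)\in E(\Phi,\ma)$ but otherwise produces a term $x_{\alpha}(uv^{2})$, yielding the $\ma^{3}$ contribution after bilinear manipulation. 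Together the two decompositions cover exactly $2\ma^{2}+\ma^{3}=(2R+\ma)\ma^{2}$, and the same commutator-conjugation mechanism as before completes the normality check in each case.
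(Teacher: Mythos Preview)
Your core computation --- the ``interesting case'' $\beta=-\alpha$ --- is correct and is precisely the heart of the matter. But the reduction to that case via ``a straightforward induction on the length of a word representing a general $g\in E(\Phi,R)$'' does not work. After one conjugation by $x_{-\alpha}(r)$, the element ${}^{x_{-\alpha}(r)}x_{\alpha}(t)$ lands in $E(\Phi,\ma)$, but it is expressed through conjugates such as ${}^{x_{-\alpha}(r)}x_{\gamma}(u)$ with $u\in\ma$, whose parameters lie only in $\ma$, not in $I$. To continue the induction you would have to conjugate these by the next letter of $g$ and remain inside $E(\Phi,\ma)$; if that next letter happens to be $x_{-\gamma}(r')$ you are back in the ``interesting case'' with a parameter merely in $\ma$, and your decomposition trick no longer applies. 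In effect the induction step would require $E(\Phi,\ma)$ to be normalised by each root element of $E(\Phi,R)$, which is false --- that is exactly why $E(\Phi,\ma)$ differs from $E(\Phi,R,\ma)$.

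The remedy is already in the paper: Lemma~\ref{lemma2} says that $E(\Phi,R,I)$ is generated by the elements $z_{\alpha}(\xi,\zeta)={}^{x_{-\alpha}(\zeta)}x_{\alpha}(\xi)$ with $\xi\in I$, $\zeta\in R$. So it suffices to show each $z_{\alpha}(\xi,\zeta)\in E(\Phi,\ma)$, and that is exactly your ``interesting case'' --- no induction is needed at all, and your harmless case becomes unnecessary as a separate step. This is precisely the reduction the paper uses when proving the more general Lemma~\ref{nllnnn}; Lemma~\ref{lemtits} itself is not proved in the paper but attributed to Stein and Tits.
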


\par
Let $\ma$ be an ideal of $R$. Denote by $\ma_2$ the ideal, generated by
$2\xi$ and $\xi^2$ for all $\xi\in\ma$. The first component $\ma$ of an
{\it admissible pair} $(\ma,\mb)$ is an ideal of $R$, parametrising short
roots. When $\Phi\neq\C_l$ the second component $\mb$,
$\ma_2\le\mb\le\ma$, is also an ideal, parametrising long roots.
In the exceptional
case $\Phi=\C_l$ the second component $\mb$ is an additive subgroup
stable under multiplication by $\xi^2$, $\xi\in R$ (in other words,
it is a relative form parameter in the sense of Bak \cite{BV3,HO,RN}).
A similar notion can be introduced for the type $\G_2$ as well, but
in this case one should replace 2 by 3 {\it everywhere\/} in the above
definition.
\par
Now the {\it relative\/} elementary subgroup, corresponding to an
admissible pair $(\ma,\mb)$, is defined as follows:
$$ E(\Phi,R,\ma,\mb)={\left\langle
x_{\alpha}(\xi),\alpha\in\Phi_s,\xi\in\ma;\
x_{\beta}(\zeta),\beta\in\Phi_l,\zeta\in\mb
\right\rangle}^{E(\Phi,R)}. $$
\noindent
where $\Phi_s$ and $\Phi_l$ are the sets of long and short
roots in $\Phi$, respectively. The following results can be found in
\cite{stein2,abe89,abe91}.

\begin{Lem}\label{lemma1}
Let\/ $\rk(\Phi)\ge 2$. When\/ $\Phi=\operatorname{B}_2$
or\/ $\Phi=\operatorname{G}_2$ assume moreover that\/ $R$ has
no residue fields\/ ${\Bbb F}_{\!2}$ of $2$ elements. Then the
elementary subgroup\/ $E(\Phi,R,\ma,\mb)$ is\/ $E(\Phi,R)$-perfect,
in other words,
$$ \big[E(\Phi,R),E(\Phi,R,\ma,\mb)\big]=E(\Phi,R,\ma,\mb). $$
\noindent
In particular,\/ $E(\Phi,R)$ is perfect.
\end{Lem}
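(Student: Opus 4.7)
The inclusion $[E(\Phi,R), E(\Phi,R,\ma,\mb)] \subseteq E(\Phi,R,\ma,\mb)$ is immediate from the normality of $E(\Phi,R,\ma,\mb)$ inside $E(\Phi,R)$, built into its defining normal closure. For the reverse inclusion, I first observe that the subgroup $H := [E(\Phi,R), E(\Phi,R,\ma,\mb)]$ is itself normal in $E(\Phi,R)$, since both factors of the commutator are $E(\Phi,R)$-stable. It therefore suffices to place each of the \emph{initial} elementary generators $x_\alpha(\xi)$ --- with $\alpha \in \Phi_s$, $\xi \in \ma$, or $\alpha \in \Phi_l$, $\xi \in \mb$ --- inside $H$; normality of $H$ will then automatically cover all of their $E(\Phi,R)$-conjugates, which by definition generate $E(\Phi,R,\ma,\mb)$.

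To produce a given generator $x_\alpha(\xi)$, apply Lemma~\ref{lem1} to decompose $\alpha = \gamma+\delta$ with $N_{\gamma\delta 11} = 1$, or, in the exceptional case $\Phi = \C_l$ with $\alpha$ long, as $\alpha = 2\gamma+\delta$ or $\alpha = \gamma+2\delta$ with the corresponding structure constant equal to $1$. The Chevalley commutator formula then gives
$$[x_\gamma(a), x_\delta(b)] = x_\alpha\bigl(\pm a^{i_0} b^{j_0}\bigr) \cdot \prod_{(i,j) \neq (i_0, j_0)} x_{i\gamma+j\delta}\bigl(N_{\gamma\delta ij}\, a^i b^j\bigr),$$
where $(i_0, j_0)$ is the exponent pair producing $\alpha$. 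Choosing the parameters $(a,b)$ so that either $x_\gamma(a)$ or $x_\delta(b)$ already lies in $E(\Phi,R,\ma,\mb)$ places the left-hand commutator inside $H$. The parasitic factors on the right carry coefficients that are higher-degree monomials in $a$ and $b$; the admissible-pair conditions --- notably $\xi^2 \in \mb$ for every $\xi \in \ma$, $\mb \leq \ma$, and, in the $\C_l$ case, the form-parameter stability of $\mb$ under multiplication by squares --- guarantee that each such parasitic factor already lies in $E(\Phi,R,\ma,\mb)$. Rearranging then exhibits $x_\alpha(\xi)$ inside $H$; one handles the short-root generators first, so that in the $\C_l$ case any short-root residue $x_{\gamma+\delta}(\pm ab)$ thrown off by a long-root decomposition is already known to sit in $H$.

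The principal obstacle is careful bookkeeping of these parasitic factors across the various root-system types, particularly for the doubly- and triply-laced cases. The rank-two types $\B_2$ and $\G_2$ are the most delicate, because the pool of available decompositions $\alpha = \gamma+\delta$ is small and some structure constants $N_{\gamma\delta ij}$ are divisible by $2$. Over a ring with an $\mathbb{F}_{\!2}$-residue field the semi-simple elements $h_\alpha(\epsilon)$ collapse to the identity (only $\epsilon = 1$ is available), depriving one of the standard trick of separating $x_\alpha(\xi)$ from its higher-degree companions by conjugation with a torus element; this degeneracy is precisely what the $\mathbb{F}_{\!2}$-exclusion in the hypothesis rules out. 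The final assertion, perfectness of $E(\Phi,R)$, is then the specialisation $\ma = \mb = R$.
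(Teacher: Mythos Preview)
The paper does not prove this lemma; it is cited from Stein \cite{stein2} and Abe \cite{abe89,abe91}. Your overall scheme --- decompose $\alpha$ via Lemma~\ref{lem1}, realise $x_\alpha(\xi)$ as a Chevalley commutator times a tail, and absorb the tail --- is exactly the standard argument in those references, and it goes through cleanly when $\rk(\Phi)\ge 3$.

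There is, however, a real gap in your bookkeeping. You assert that the parasitic factors ``already lie in $E(\Phi,R,\ma,\mb)$'', but what you need is that they lie in $H=[E(\Phi,R),E(\Phi,R,\ma,\mb)]$. Your ordering device (treat one root length first, then the other) does dissolve the circularity when $\rk(\Phi)\ge 3$, because one root length always embeds in an $\A_2$ subsystem and can be handled with no tail at all; the remaining length then throws off tails that have already been placed in $H$. In $\B_2$ neither length embeds in $\A_2$, and in $\G_2$ the short roots do not, so for these two types the circularity is genuine and your plan, as written, does not close.

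More seriously, your explanation of the $\mathbb{F}_{\!2}$ hypothesis is incorrect. A ring with an $\mathbb{F}_{\!2}$ residue field can have plenty of units --- take $R=\mathbb{Z}_{(2)}$ --- so the torus elements $h_\alpha(\epsilon)$ do not ``collapse''. The actual mechanism is arithmetic: $R$ has no residue field $\mathbb{F}_{\!2}$ if and only if the ideal generated by $\{c^2-c:c\in R\}$ is all of $R$. One compares, for a free parameter $c\in R$, the commutators $[x_\gamma(ca),x_\delta(b)]$ and $[x_\gamma(a),x_\delta(cb)]$; their leading terms coincide while the quadratic tails differ by $c$ versus $c^2$, so the ratio yields $x_{\text{tail}}\big(\pm(c^2-c)\,\cdot\,\big)\in H$. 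Since the elements $c^2-c$ generate $R$, this places the tail itself in $H$ and breaks the loop. This is precisely the manoeuvre used later in the paper in the proof of Lemma~\ref{nlln}; you should invoke it here rather than the torus heuristic.
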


\begin{Lem}\label{lemma2}
As a subgroup\/ $E(\Phi,R,\ma,\mb)$ is generated by the elements
$$ z_{\al}(\xi,\zeta)=x_{-\al}(\zeta)x_{\al}(\xi)x_{-\al}(-\zeta), $$
\noindent
where\/ $\xi\in\ma$ for $\al\in\Phi_s$ and\/ $\xi\in\mb$
for\/ $\al\in\Phi_l$, while\/ $\zeta\in R$.
\end{Lem}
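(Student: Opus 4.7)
The plan is to establish both inclusions between $E(\Phi,R,\ma,\mb)$ and the subgroup $H$ generated by all the elements $z_\alpha(\xi,\zeta)$.

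The inclusion $H \subseteq E(\Phi,R,\ma,\mb)$ is immediate: each $z_\alpha(\xi,\zeta) = {}^{x_{-\alpha}(\zeta)}x_\alpha(\xi)$ is an $E(\Phi,R)$-conjugate of a defining generator of $E(\Phi,\ma,\mb)$, so it belongs to its $E(\Phi,R)$-normal closure. For the reverse inclusion, specialising $\zeta = 0$ gives $x_\alpha(\xi) = z_\alpha(\xi,0) \in H$, so $H$ already contains all defining generators of $E(\Phi,\ma,\mb)$; it therefore suffices to prove that $H$ is normal in $E(\Phi,R)$, for then $H$ must absorb the entire normal closure. Concretely, I must show that ${}^{x_\beta(\eta)}z_\alpha(\xi,\zeta) \in H$ for every $\beta \in \Phi$ and every $\eta \in R$.

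When $\beta = -\alpha$, additivity of $X_{-\alpha}$ gives immediately
$$ {}^{x_{-\alpha}(\eta)}z_\alpha(\xi,\zeta) = z_\alpha(\xi,\eta+\zeta) \in H. $$
When $\beta \neq \pm\alpha$, writing $X = x_\beta(\eta)$, $Y = x_{-\alpha}(\zeta)$, $Z = x_\alpha(\xi)$, the identities (C1)--(C2) rearrange
$$ {}^X(YZY^{-1}) = [X,Y]\cdot{}^Y[X,Z]\cdot(YZY^{-1})\cdot[X,Y]^{-1}, $$
and the Chevalley commutator formula (R2) expands $[X,Z]$ as an ordered product $\prod x_\gamma(N\eta^i\xi^j)$ along roots $\gamma = i\beta+j\alpha$ with $j\ge 1$. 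Since the factor $\xi^j$ drags the coefficient into the component of $(\ma,\mb)$ appropriate to the length of $\gamma$ by admissibility, each factor is a $z_\gamma(\cdot,0)\in H$, so $[X,Z]\in H$; the neighbouring conjugate ${}^Y[X,Z]$ reduces by the same procedure applied to roots of smaller height in the $\alpha$-$\beta$ string, and the flanking terms $[X,Y]^{\pm 1}$, which involve only $\zeta^j$, cancel in pairs modulo further $z_\gamma$-factors already known to lie in $H$.

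The remaining case $\beta = \alpha$ is handled by appealing to Lemma~\ref{lem1}: since $\alpha$ can be written as $\gamma+\delta$ (respectively $2\gamma+\delta$ or $\gamma+2\delta$ in type $\C_l$ for $\alpha$ long) with roots $\gamma,\delta \neq \pm\alpha$, the root element $x_\alpha(\eta)$ arises --- up to correction terms along other roots --- as a commutator of elements of $X_\gamma$ and $X_\delta$. Conjugation by such a commutator resolves into successive conjugations by $x_\gamma(\cdot)^{\pm 1}$ and $x_\delta(\cdot)^{\pm 1}$, returning us to the already-settled case. The main obstacle is the bookkeeping in type $\C_l$ when $\alpha$ is long, where the twisted commutator decomposition produces quadratic and doubling corrections with coefficients $\xi^2$ and $2\xi$ for $\xi \in \ma$. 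The admissibility condition $\ma_2 \subseteq \mb$, with $\ma_2 = \langle 2\xi,\xi^2 : \xi\in\ma\rangle$, is designed precisely to absorb these distortions and keep all resulting coefficients inside the component of $(\ma,\mb)$ appropriate to the length of the emerging root, completing the verification.
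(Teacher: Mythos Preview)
The paper does not supply a proof of this lemma; it is quoted as a known fact with references to Stein \cite{stein2} and Abe \cite{abe89,abe91}. So there is no ``paper's own proof'' to compare against, only your attempt.

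Your overall plan is the right one: let $H$ be the subgroup generated by the $z_\alpha(\xi,\zeta)$, observe the trivial inclusion $H\le E(\Phi,R,\ma,\mb)$, and then show $H$ is $E(\Phi,R)$-normal by checking that each root element $x_\beta(\eta)$ normalises $H$. The case $\beta=-\alpha$ is fine.

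The gap is in your treatment of $\beta\neq\pm\alpha$. Your identity
\[
{}^X(YZY^{-1}) = [X,Y]\cdot{}^Y[X,Z]\cdot(YZY^{-1})\cdot[X,Y]^{-1}
\]
is correct, but it expresses ${}^X z_\alpha(\xi,\zeta)$ as the $[X,Y]$-conjugate of something you hope lies in $H$. Here $[X,Y]=[x_\beta(\eta),x_{-\alpha}(\zeta)]$ has both parameters in $R$, not in the ideals, so $[X,Y]\in E(\Phi,R)$ is a generic element, and asserting that conjugation by it preserves $H$ is precisely the normality statement you are trying to prove. Saying the flanking terms ``cancel in pairs modulo further $z_\gamma$-factors already known to lie in $H$'' does not escape this: to commute $[X,Y]$ past an $H$-element you must produce $\big[[X,Y],h\big]\in H$, which again presupposes normality. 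Likewise, ``reduces by the same procedure applied to roots of smaller height in the $\alpha$--$\beta$ string'' is not a well-founded induction as stated: conjugating $x_{i\beta+j\alpha}(\ast)$ by $x_{-\alpha}(\zeta)$ produces roots of the form $j'(i\beta+j\alpha)-i'\alpha$, and there is no monotone decrease in any obvious height function.

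Because the $\beta\neq\pm\alpha$ case is not actually settled, your reduction of the $\beta=\alpha$ case to it via Lemma~\ref{lem1} does not complete the argument either. The published proofs (Stein, Abe, Tits \cite{tits}, Vaserstein) carry out the computation more carefully, typically by working inside the rank-$2$ subsystem spanned by $\alpha$ and $\beta$ and verifying the closure explicitly, or by a genuine induction on a root-height with respect to a fixed ordering. You would need to supply one of these to close the loop.
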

Actually, in the sequel we mostly use these results in the
special case, where $\ma=\mb$.

%%%%%%%%%%%%%%%%%%%%%%%%%%%%%%%%%%%%%%%%%%%%%%%%%%%%%%%%%%%%%%%%%%%%%%%%%%%%%

\section{Congruence subgroups}\label{level2}

Usually, one defines congruence subgroups as follows. An ideal
$\ma\trianglelefteq R$ determines the reduction homomorphism
$\rho_{\ma}:R\map R/\ma$. Since $G(\Phi,\underline{\ \ })$ is a functor from
rings to groups, this homomorphism induces reduction homomorphism
$\rho_{\ma}:G(\Phi,R)\map G(\Phi,R/\ma)$.
\par\smallskip
$\bullet$ The kernel of the reduction homomorphism $\rho_{\ma}$
modulo $\ma$ is called the principal congruence subgroup of
level $\ma$ and is denoted by $G(\Phi,R,\ma)$.
\par\smallskip
$\bullet$ The full pre-image of the centre of $G(\Phi,R/\ma)$ with
respect to the reduction homomorphism $\rho_{\ma}$ modulo $\ma$
is called the full congruence subgroup of level $\ma$, and
is denoted by $C(\Phi,R,\ma)$.
\par\smallskip
A more general notion of congruence subgroup was introduced in
\cite{HPV}.
Namely, consider a linear action of $G$ on a right $R$-module $V$
and let $U\le V$ be a $G$-submodule. Then we can define a set
$$ G(V,U)=\big \{g\in G\mid \forall v\in V,\ gv-v\in U\big \}. $$
\noindent
This set is in fact a {\it normal\/} subgroup of $G$.
\par
An application of this construction to a Chevalley group
$G=G(\Phi,R)$ and its rational module $V$ allows us to recover
the usual subgroups. For any module $V$ and any ideal
$\ma\trianglelefteq R$ the product $U=V\ma$ is a $G$-submodule.
The following result is \cite[Lemma~6]{HPV}.
\begin{Lem}\label{lemma6}
When\/ $V$ is a faithful rational module,\/ $G(V,V\ma)=G(\Phi,R,\ma)$
is the usual principal congruence subgroup of level\/ $\ma$.
\end{Lem}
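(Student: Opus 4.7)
The plan is to verify the two inclusions $G(V,V\ma)\supseteq G(\Phi,R,\ma)$ and $G(V,V\ma)\subseteq G(\Phi,R,\ma)$ separately. The whole argument rests on the functoriality of the rational representation $G(\Phi,-)\to\GL(V)$, together with faithfulness for the harder direction.

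For the easy inclusion $G(\Phi,R,\ma)\subseteq G(V,V\ma)$, I would simply invoke naturality of the representation in the ring argument. If $\rho_{\ma}(g)=1$, then the action of $g$ on $V$ reduces modulo $\ma$ to the trivial action of $\rho_{\ma}(g)$ on $V\otimes_R R/\ma\cong V/V\ma$. Explicitly, the image of $gv-v$ in $V/V\ma$ vanishes for every $v\in V$, so $gv-v\in V\ma$. No computation beyond compatibility with base change is needed.

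For the reverse inclusion $G(V,V\ma)\subseteq G(\Phi,R,\ma)$, I would argue as follows. Take $g\in G(V,V\ma)$; by hypothesis the reduction $\bar g:=\rho_{\ma}(g)\in G(\Phi,R/\ma)$ acts as the identity on $V\otimes_R R/\ma$. The critical step is to convert this triviality of action on the reduced module into the statement $\bar g=1$ inside $G(\Phi,R/\ma)$, and this is precisely where the faithfulness of $V$ enters. Interpreting \emph{faithful rational module} as a scheme-theoretic monomorphism $G(\Phi,-)\hookrightarrow\GL(V)$ of affine group schemes over $\Int$, this property is inherited by the induced map on $S$-points for every commutative ring $S$. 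Applied with $S=R/\ma$, it gives $\bar g=1$, so $g\in\ker\rho_{\ma}=G(\Phi,R,\ma)$.

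The main (and in fact only) obstacle I anticipate is making precise what \emph{faithful rational module} is supposed to mean. The cleanest reading is the scheme-theoretic one above, and under that reading the proof is immediate. Under the weaker reading of injectivity only on $R$-points, one would have to descend by hand: one would need to check that each generator of $G(\Phi,R/\ma)\smallsetminus\{1\}$ acts nontrivially on $V\otimes_R R/\ma$, which can be done by exhibiting, for every root subgroup, a weight vector of $V$ on which a non-trivial element acts non-trivially and noting that this non-triviality is preserved on reduction. I would use the scheme-theoretic interpretation to sidestep this bookkeeping, and only fall back on the weight-vector argument if the interpretation of \emph{faithful} adopted in \cite{HPV} turns out to be the weaker one.
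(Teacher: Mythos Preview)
The paper does not actually prove this lemma; it merely cites it as \cite[Lemma~6]{HPV} and moves on. So there is no proof in the present paper to compare your proposal against.

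That said, your argument is the natural one and is almost certainly what is carried out in \cite{HPV}: the inclusion $G(\Phi,R,\ma)\subseteq G(V,V\ma)$ is automatic from functoriality of the representation under base change $R\to R/\ma$, and the reverse inclusion amounts exactly to the statement that $G(\Phi,R/\ma)\to\GL(V\otimes_R R/\ma)$ is injective. You are right that the only subtlety is the meaning of \emph{faithful rational module}. In the Chevalley-group literature (and in particular in \cite{HPV}) this is understood scheme-theoretically --- the representation is defined over $\Int$ and the associated homomorphism of group schemes is a closed immersion --- so injectivity on $R/\ma$-points is immediate and your fallback weight-vector argument is unnecessary.
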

In matrix language, this lemma means that the principal
congruence subgroup of level $\ma$ can be defined as
$$ G(\Phi,R,\ma)=G(\Phi,R)\cap\GL(n,R,\ma), $$
\noindent
for any {\it faithful\/} rational representation
$G(\Phi,R)\le\GL(n,R)$.
\par
Clearly, for {\it any\/} rational representation
$\phi:G(\Phi,R)\map\GL(n,R)$, one has the inclusions
$$ \phi^{-1}\big(G(\Phi,R)\cap\GL(n,R,\ma)\big)\le
C(\Phi,R,\ma)\le \phi^{-1}\big(G(\Phi,R)\cap C(n,R,\ma)\big), $$
\noindent
for the full congruence subgroup. In the general case
there is no reason, why either of these inclusions should be
an equality. However, there is one important special case,
where the left inclusion becomes an equality \cite[Lemma~7]{HPV}.
\begin{Lem}\label{lemma7}
When\/ $V=L$ is the Lie algebra of\/ $G(\Phi,R)$, considered as the
adjoint module, then\/ $G(L,L\ma)=C(\Phi,R,\ma)$ is the usual full
congruence subgroup of level\/ $\ma$.
\end{Lem}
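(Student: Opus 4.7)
The plan is to reduce the statement to the classical identification of the centre of a Chevalley group as the kernel of its adjoint representation. Since the Chevalley basis $\{h_{\al_1},\ldots,h_{\al_l}\}\cup\{e_\al:\al\in\Phi\}$ exhibits $L$ as a free $R$-module, the quotient $L/L\ma\cong L\otimes_R R/\ma$ is canonically the Lie algebra $L_{R/\ma}$ of the reduced Chevalley group $\bar G=G(\Phi,R/\ma)$, and the adjoint action of $G(\Phi,R)$ on $L$ is compatible with $\rho_\ma$, descending to the adjoint action of $\bar G$ on $L_{R/\ma}$. Consequently, the defining condition of $G(L,L\ma)$ --- that $gv-v\in L\ma$ for every $v\in L$ --- is equivalent to $\rho_\ma(g)$ acting trivially on $L_{R/\ma}$, that is, $\rho_\ma(g)\in\ker(\operatorname{Ad}_{\bar G})$.

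By definition $C(\Phi,R,\ma)=\rho_\ma^{-1}(Z(\bar G))$, so the lemma reduces to the identity $\ker(\operatorname{Ad}_{\bar G})=Z(\bar G)$. The inclusion $Z(\bar G)\subseteq\ker(\operatorname{Ad})$ is immediate, since conjugation by a central element is trivial on every generator and therefore on every Chevalley basis vector. For the reverse inclusion, I would use the fact that $x_\al(t)$ is constructed as the exponential of $t\cdot e_\al$ in any faithful representation, so trivial $\operatorname{Ad}$-action on $e_\al$ forces commutation with $x_\al(t)$ for every $t$. Hence such an element centralises the elementary subgroup $E(\Phi,R/\ma)$, and combining this with the description of $T_P(\Phi,R/\ma)$ in Lemma~\ref{lem-weights} together with the formula $h_\omega(\e)x_\al(\xi)h_\omega(\e)^{-1}=x_\al(\e^{(\al,\omega)}\xi)$ recalled earlier, triviality of $\operatorname{Ad}$ on all of $L_{R/\ma}$ forces any torus component of $\bar g$ to satisfy $\e^{(\al,\omega)}=1$ for every root $\al$, hence to be central.

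The main obstacle is this last step --- passing from trivial adjoint action to genuine centrality over a possibly pathological commutative ring $R/\ma$. Over a field this is immediate from the Bruhat decomposition, but in general one must either appeal to the scheme-theoretic fact that the centre of $G_P(\Phi,-)$ is represented by the kernel of $\operatorname{Ad}$, which gives the identification on $R/\ma$-points automatically, or else argue directly by separating $\bar g$ into torus and elementary parts and verifying that no non-central weight element fixes every $e_\al$. Either approach is a standard application of Chevalley structure theory, and the details are carried out in the reference cited in the statement.
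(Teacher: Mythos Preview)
The paper does not give its own proof of this lemma; it is quoted verbatim as \cite[Lemma~7]{HPV}, so there is nothing to compare your argument against inside the present text. Your reduction is the correct one: since $L$ is free on the Chevalley basis, $G(L,L\ma)=\rho_\ma^{-1}\bigl(\ker\operatorname{Ad}_{R/\ma}\bigr)$, and the lemma is equivalent to the identity $\ker\operatorname{Ad}_{\bar G}=Z(\bar G)$ for $\bar G=G(\Phi,R/\ma)$. The inclusion $Z(\bar G)\subseteq\ker\operatorname{Ad}$ and your observation that $\operatorname{Ad}(g)e_\al=e_\al$ implies $g$ commutes with every $x_\al(t)$ (by exponentiating in a faithful representation) are both fine.

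The gap is in the next step of your direct argument. Commuting with the elementary subgroup $E(\Phi,R/\ma)$ does not by itself give centrality in $G(\Phi,R/\ma)$ over an arbitrary commutative ring, and there is no ``torus component of $\bar g$'' to speak of: a general element of $G(\Phi,R/\ma)$ need not factor as a product of an elementary element and a torus element, so the appeal to Lemma~\ref{lem-weights} and the weight formula does not apply to $\bar g$ as written. Your alternative route --- the scheme-theoretic fact that for a Chevalley (more generally reductive) group scheme the centre functor is represented by $\ker\operatorname{Ad}$, whence $\ker\operatorname{Ad}(R/\ma)=Z_G(R/\ma)\subseteq Z\bigl(G(R/\ma)\bigr)$ --- is the one that actually closes the argument, and it is essentially what is carried out in \cite{HPV}. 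So your proposal is correct provided you commit to the scheme-theoretic justification and drop the heuristic about torus components.
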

The following result, Theorem 2 of \cite{HPV}, asserts that three
possible definitions of the full congruence subgroup coincide.
\begin{Lem}\label{thm2}
Let\/ $\Phi$ be a reduced irreducible root system of rank\/ $\ge 2$,
$R$ be a commutative ring,\/ $(\ma,\mb)$ an admissible pair. Then the
following four subgroups coincide:
$$ \aligned C(\Phi,R,\ma,\mb)
&=\big \{g\in G(\Phi,R)\mid [g,E(\Phi,R)]\le E(\Phi,R,\ma,\mb)\big \}\\
&=\big\{g\in G(\Phi,R)\mid [g,E(\Phi,R)]\le C(\Phi,R,\ma,\mb)\big\}\\
&=\big\{g\in G(\Phi,R)\mid [g,G(\Phi,R)]\le C(\Phi,R,\ma,\mb)\big\}.\\
\endaligned $$
\end{Lem}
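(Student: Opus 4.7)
Denote the four subgroups in the statement by $H_1=C(\Phi,R,\ma,\mb)$, $H_2$, $H_3$, $H_4$ in the order listed. The plan is to close a cycle of inclusions. Three of them are essentially formal. The inclusion $E(\Phi,R,\ma,\mb)\le C(\Phi,R,\ma,\mb)$ gives $H_2\subseteq H_3$, while $E(\Phi,R)\le G(\Phi,R)$ gives $H_4\subseteq H_3$. The full congruence subgroup $C(\Phi,R,\ma,\mb)$ is normal in $G(\Phi,R)$ (via its module-stabiliser description in the spirit of Lemmas \ref{lemma6}--\ref{lemma7}), so for $g\in H_1$ and $h\in G(\Phi,R)$ one has $[g,h]=g\cdot(hg^{-1}h^{-1})\in C(\Phi,R,\ma,\mb)$, yielding $H_1\subseteq H_4$. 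The content therefore reduces to the two remaining inclusions $H_3\subseteq H_1$ and $H_1\subseteq H_2$.

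I would tackle $H_3\subseteq H_1$ first, by a perfectness-plus-centraliser argument. Let $g\in G(\Phi,R)$ satisfy $[g,E(\Phi,R)]\le C(\Phi,R,\ma,\mb)$ and push through the reduction $\bar\rho\colon G(\Phi,R)\to G(\Phi,R/\ma)$. Then $[\bar g,\bar e]$ lies in the centre $Z(G(\Phi,R/\ma))$ for every $e\in E(\Phi,R)$. Because the values are central, the commutator identity (C1) forces the map $\bar e\mapsto[\bar g,\bar e]$ to be an honest group homomorphism from $E(\Phi,R/\ma)$ into that centre. By Lemma \ref{lemma1} the elementary group is perfect, so the homomorphism vanishes and $\bar g$ centralises $E(\Phi,R/\ma)$. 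The standard fact that the centraliser of $E$ in $G$ coincides with $Z(G)$ now forces $\bar g\in Z(G(\Phi,R/\ma))$, i.e., $g\in C(\Phi,R,\ma)$. To strengthen this to $g\in C(\Phi,R,\ma,\mb)$ I would repeat the argument on the long-root components modulo $\mb$, using the adjoint-module characterisation from Lemma \ref{lemma7} to translate between the group-level congruence and the Lie-algebra-level one.

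For $H_1\subseteq H_2$, take $g\in C(\Phi,R,\ma,\mb)$; by (C1)--(C2) it is enough to verify $[g,x_\alpha(t)]\in E(\Phi,R,\ma,\mb)$ for every root unipotent. Using the adjoint characterisation of Lemma \ref{lemma7} (and its admissible-pair analogue), $\operatorname{Ad}(g)e_\alpha\equiv e_\alpha$ modulo the appropriate level of the Lie algebra $L$. Expanding $gx_\alpha(t)g^{-1}$ in the Chevalley basis and comparing with $x_\alpha(t)$, the commutator $[g,x_\alpha(t)]$ emerges as a product of conjugates of root unipotents whose parameters sit at the level dictated by $(\ma,\mb)$, which are precisely the generators $z_\alpha(\xi,\zeta)$ of $E(\Phi,R,\ma,\mb)$ supplied by Lemma \ref{lemma2}. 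The main obstacle is the refinement from the single-ideal condition $C(\Phi,R,\ma)$ to the admissible-pair condition $C(\Phi,R,\ma,\mb)$, which requires careful bookkeeping on long-root components; this is especially delicate in the symplectic case $\Phi=\C_l$, where Bak's form-parameter structure genuinely intervenes, and in the rank-$2$ cases $\B_2,\G_2$, where perfectness of $E$ is sensitive to the presence of $\Bbb F_2$ residue fields.
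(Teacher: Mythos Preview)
The paper does not prove this lemma at all: it is quoted verbatim as Theorem~2 of \cite{HPV}, so there is no ``paper's own proof'' to compare your attempt against.

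On the substance of your outline: the formal inclusions $H_2\subseteq H_3$, $H_4\subseteq H_3$, $H_1\subseteq H_4$ are fine. But two of your substantive steps lean on results that, as stated in this paper, carry hypotheses your target lemma does \emph{not} carry. Your argument for $H_3\subseteq H_1$ invokes perfectness of $E(\Phi,R/\ma)$ via Lemma~\ref{lemma1}, which in the cases $\Phi=\B_2,\G_2$ requires that $R$ (hence $R/\ma$) have no residue field $\mathbb{F}_2$; Lemma~\ref{thm2} makes no such assumption. Likewise, your $H_1\subseteq H_2$ is exactly the inclusion $[C(\Phi,R,\ma,\mb),E(\Phi,R)]\le E(\Phi,R,\ma,\mb)$, which is one half of Lemma~\ref{thm1}, and that lemma again carries the no-$\mathbb{F}_2$ hypothesis in rank~$2$. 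So as written your route does not recover the lemma in the generality stated; you would need either to supply the rank-$2$ analysis separately, or to argue (as in \cite{HPV}) in a way that avoids global perfectness.

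Separately, your passage from $g\in C(\Phi,R,\ma)$ to $g\in C(\Phi,R,\ma,\mb)$ is only gestured at. The admissible-pair refinement is genuine content here---in the symplectic case $\mb$ is not even an ideal but a relative form parameter---and ``repeat the argument on the long-root components modulo~$\mb$'' is not a proof. This is precisely the place where \cite{HPV} does real work, and where your sketch would need the adjoint-module description of $C(\Phi,R,\ma,\mb)$ from \cite{HPV} (not merely Lemma~\ref{lemma7}, which handles only the single-ideal case) made fully explicit.
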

In fact, in \cite{HPV} we established standard commutator formulae
for the case, where one argument is an {\it absolute\/} subgroup,
whereas the second argument is a relative subgroup with {\it two\/}
parameters. In particular, the following result is Theorem 1
of \cite{HPV}. Of course, in all cases, except Chevalley groups
of type $\F_4$, it was known before, \cite{BV3,petrov2,costakeller2}.
\begin{Lem}\label{thm1}
Let\/ $\Phi$ be a reduced irreducible root system of rank\/ $\ge 2$,
$R$ be a commutative ring,\/ $(\ma,\mb)$ an admissible pair. In the
case, where\/ $\Phi=\C_2$ or\/ $\Phi=\G_2$ assume moreover
that\/ $R$ has no residue fields\/ ${\Bbb F}_{\!2}$ of\/ $2$
elements. Then the following standard commutator formulae holds
$$ \big [G(\Phi,R),E(\Phi,R,\ma,\mb)\big ]=\big [E(\Phi,R),C(\Phi,R,\ma,\mb)\big]=
E(\Phi,R,\ma,\mb). $$
\end{Lem}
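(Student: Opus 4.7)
The plan is to establish the two equalities
$[E(\Phi,R),C(\Phi,R,\ma,\mb)]=E(\Phi,R,\ma,\mb)$ and
$[G(\Phi,R),E(\Phi,R,\ma,\mb)]=E(\Phi,R,\ma,\mb)$ separately.
Writing $E=E(\Phi,R)$, $G=G(\Phi,R)$ and abbreviating the relative
subgroups to $E(\ma,\mb)$ and $C(\ma,\mb)$, the $\supseteq$-direction
of both equalities would follow uniformly from $E$-perfectness.
Indeed, $E(\ma,\mb)\le C(\ma,\mb)$ because $[E(\ma,\mb),E]\le E(\ma,\mb)$
by the very definition of $E(\ma,\mb)$ as a normal closure in $E$, and
this is precisely the first defining condition for $C(\ma,\mb)$ in
Lemma \ref{thm2}. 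Combining this with the trivial $E\le G$, one gets
$$ \big[E,E(\ma,\mb)\big]\subseteq
\big[E,C(\ma,\mb)\big]\cap\big[G,E(\ma,\mb)\big], $$
and the left-hand side coincides with $E(\ma,\mb)$ by Lemma \ref{lemma1}.
This is exactly the one point where the hypothesis $\rk(\Phi)\ge 2$
and the residue-field restriction in types $\C_2,\G_2$ are used.

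For the $\subseteq$-direction of the first equality, the inclusion
$[E,C(\ma,\mb)]\le E(\ma,\mb)$ is essentially a tautology, being once
again the first defining condition for $C(\ma,\mb)$ in Lemma \ref{thm2}.
What remains, and what constitutes the technical heart of the lemma,
is the inclusion $[G,E(\ma,\mb)]\le E(\ma,\mb)$, equivalently the
normality of $E(\ma,\mb)$ in the whole Chevalley group $G(\Phi,R)$
rather than merely in $E(\Phi,R)$, which is built into its definition.

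To obtain this extended normality I would use the generating set
$\{z_\alpha(\xi,\zeta)\}$ of Lemma \ref{lemma2} and reduce to showing
that ${}^g z_\alpha(\xi,\zeta)\in E(\ma,\mb)$ for every $g\in G(\Phi,R)$.
The toral part of the conjugation, which rescales $x_\alpha(\xi)\mapsto
x_\alpha(\e\xi)$ or $x_\alpha(\e^2\xi)$ depending on the length of
$\alpha$, is controlled by Lemma \ref{lem-diagonal}; in the exceptional
$\C_l$ long-root case the weight element $h_{\varpi_l}(\e)$ supplied
by Lemma \ref{lem-weights} is crucially needed to restore the
un-squared scaling, so that the parameter stays in $\mb$ rather than
accidentally dropping into $\ma_2$. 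The remaining non-toral contribution
is then expanded via the Chevalley commutator formula (R2) and
Lemma \ref{lem1}, and the main obstacle I expect is the bookkeeping in
the multiply-laced types, particularly $\F_4$, where short and long
roots mix in the commutator expansions in the most intricate way ---
this is exactly the admissible-pair compatibility
$\{$short coefficients in $\ma\}$, $\{$long coefficients in $\mb\}$
that one needs to preserve. An alternative route I would fall back on
is the localisation-and-patching yoga itself, which reduces extended
normality to the semi-local case where it essentially becomes a
Bruhat-decomposition calculation; this is the very technique the
present paper extends to the relative setting.
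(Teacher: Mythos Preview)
The paper does not actually prove this lemma; it is quoted verbatim as Theorem~1 of \cite{HPV} (and, except for type $\F_4$, attributed further back to \cite{BV3,petrov2,costakeller2}). So there is no in-paper argument to compare against, only the literature proof, which proceeds by localisation.

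Your treatment of the easy inclusions is fine: $E$-perfectness (Lemma~\ref{lemma1}) gives $E(\ma,\mb)\subseteq[E,C(\ma,\mb)]\cap[G,E(\ma,\mb)]$, and once Lemma~\ref{thm2} is granted, $[E,C(\ma,\mb)]\subseteq E(\ma,\mb)$ is immediate. (A small caveat: in \cite{HPV} the characterisation of $C(\ma,\mb)$ in Lemma~\ref{thm2} is itself deduced \emph{from} the commutator formula, so invoking it here is circular relative to the original source; but within the present paper both are cited as known, so this is harmless.)

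The genuine gap is in your primary approach to the hard inclusion $[G,E(\ma,\mb)]\subseteq E(\ma,\mb)$. You propose to compute ${}^g z_\alpha(\xi,\zeta)$ by separating off a ``toral part'' of $g$ and handling the rest via the Chevalley commutator formula. But over an arbitrary commutative ring $R$ an element $g\in G(\Phi,R)$ admits no such decomposition: there is no Gauss or Bruhat decomposition of $G(\Phi,R)$, and $g$ need not lie in $T(\Phi,R)\cdot E(\Phi,R)$ at all. Lemmas~\ref{lem-weights} and~\ref{lem-diagonal} tell you only how genuine torus elements conjugate root unipotents; they say nothing about a general $g$. So the direct computation you outline cannot get started.

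Your fallback is the right one. The actual proof (Taddei \cite{taddei} in the absolute case, \cite{HPV} in the relative two-parameter case) runs via localisation: one reduces to local $R$, where Lemma~\ref{abe-bis} does supply the decomposition $G(\Phi,R,\ma)=E(\Phi,\ma)T(\Phi,R,\ma)$, and then patches. This is precisely the machinery the present paper is extending, not a tool already available to you here; so your proposal correctly locates the difficulty but does not resolve it.
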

We will use the following form of Gau{\ss} decomposition, stated
by Eiichi Abe \cite{abe0,abe1}. Namely, let $\ma\unlhd R$ be an
ideal of $R$. We denote by $T(\Phi,R,\ma)$ the subgroup of the
split maximal torus $T(\Phi,R)$, consisting of all elements
congruent to $e$ modulo $\ma$,
$$ T(\Phi,R,\ma)=T(\Phi,R)\cap G(\Phi,R,\ma). $$
\noindent
As usual, we set
$$ U(\Phi,\ma)=\big\langle x_{\a}(a),\ \a\in\Phi^+,\ a\in\ma\big\rangle,
\qquad
U^-(\Phi,\ma)=\big\langle x_{\a}(a),\ \a\in\Phi^-,\ a\in\ma\big\rangle. $$
\noindent
Obviously, $U(\Phi,\ma),U^-(\Phi,\ma)\le E(\Phi,\ma)$.
\begin{Lem}
\label{abe}
Let\/ $\ma$ be an ideal of\/ $R$ contained in the
Jacobson radical\/ $\Rad(R)$. Then
$$ G(\Phi,R,\ma)=U(\Phi,\ma)T(\Phi,R,\ma)U^-(\Phi,\ma). $$
\end{Lem}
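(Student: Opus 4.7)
The plan is to prove the nontrivial inclusion by a Gauss-elimination argument in a faithful rational representation, exploiting the fact that $\ma\subseteq\Rad(R)$ makes every element of $1+\ma$ a unit. The reverse inclusion $U(\Phi,\ma)T(\Phi,R,\ma)U^-(\Phi,\ma)\subseteq G(\Phi,R,\ma)$ is immediate, since each of the three factors reduces to the identity modulo $\ma$: generators $x_\alpha(a)$ with $a\in\ma$ visibly do, and $T(\Phi,R,\ma)=T(\Phi,R)\cap G(\Phi,R,\ma)$ by definition.

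For the forward inclusion, I would fix a faithful rational representation of $G(\Phi,R)$ with a weight basis $\{v_\mu\}$ and highest weight vector $v^+=v_\lambda$. Given $g\in G(\Phi,R,\ma)$, expand
$$ gv^+=(1+\xi)v^+ + \sum_{\mu<\lambda}c_\mu v_\mu, $$
with $\xi,c_\mu\in\ma$; in particular the leading coefficient $1+\xi$ is a unit of $R$. The key step is to produce $u^-\in U^-(\Phi,\ma)$ such that $g(u^-)^{-1}$ stabilises the line $Rv^+$. I would build $u^-$ as a product $\prod_{\alpha\in\Phi^-}x_\alpha(\xi_\alpha)$, taking the roots in order of increasing height of $-\alpha$. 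Since $x_\alpha(t)v^+=v^++tv_{\lambda+\alpha}+(\text{weights strictly below }\lambda+\alpha)$, each new factor $x_\alpha(-\xi_\alpha)$ on the right of $g$ contributes to a unique weight $\lambda+\alpha$ plus strictly lower ones. The resulting system of equations for the $\xi_\alpha$ is therefore triangular with leading coefficients of the form $1+\ma\subseteq R^*$, and can be solved successively for $\xi_\alpha\in\ma$.

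Once $g':=g(u^-)^{-1}$ stabilises $Rv^+$, a standard highest-weight argument (running the same construction across enough representations, or invoking the Borel as the stabiliser of the highest weight line) shows that $g'$ lies in the positive Borel $B^+(\Phi,R)=T(\Phi,R)U(\Phi,R)$. Combining with $g'\in G(\Phi,R,\ma)$ yields $g'=hu$ with $h\in T(\Phi,R,\ma)$ and $u\in U(\Phi,\ma)$, the torus component being forced by the fact that reduction modulo $\ma$ kills both $u$ and $g'$. Since $T$ normalises $U(\Phi,\ma)$ (conjugation only rescales the root parameters, preserving the level), we may rewrite $hu=u'h$ with $u'=huh^{-1}\in U(\Phi,\ma)$, and conclude $g=u'hu^-\in U(\Phi,\ma)T(\Phi,R,\ma)U^-(\Phi,\ma)$.

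The main obstacle is the inductive construction of $u^-$ in the second paragraph: one must choose an ordering of $\Phi^-$ under which the system for the $\xi_\alpha$ is genuinely triangular, and verify that successive right-multiplications do not revive already-cancelled higher weight coefficients. This in turn requires tracking the weight contributions of $g$ applied to lower weight vectors created by earlier factors, which is where the hypothesis $\ma\subseteq\Rad(R)$ pays off repeatedly: at every stage the current coefficient of $v^+$ remains in $1+\ma\subseteq R^*$, so the triangular step is solvable with a correction $\xi_\alpha\in\ma$. Without the radical hypothesis, these leading coefficients need not be invertible and the algorithm stalls, explaining why the statement is specific to ideals contained in $\Rad(R)$.
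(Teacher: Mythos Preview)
The paper does not prove this lemma; it is quoted as a known form of Gau{\ss} decomposition due to Abe, with a reference to \cite{abe0,abe1}. So there is no ``paper's own proof'' to compare against, and your outline is essentially the classical argument that underlies Abe's result.

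That said, your sketch has a genuine gap in the key step. You want $g(u^-)^{-1}$ to stabilise $Rv^+$, and you claim that right-multiplying $g$ by $x_\alpha(-\xi_\alpha)$, $\alpha\in\Phi^-$, ``contributes to a unique weight $\lambda+\alpha$ plus strictly lower ones''. This is not true: the new factor introduces $g v_{\lambda+\alpha}$, and since $g$ is an \emph{arbitrary} element of $G(\Phi,R,\ma)$, the vector $g v_{\lambda+\alpha}$ has components in \emph{all} weights, not only in $\lambda+\alpha$ and below (the off-diagonal components lie in $\ma$, but they are there). So the resulting system in the $\xi_\alpha$ is not triangular in the height ordering, and your inductive solution does not go through as written.

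The standard fix is to reverse the side: solve instead for $u^-$ with $(u^-)^{-1}g\,v^+\in Rv^+$, equivalently $u^-v^+=(1+\xi)^{-1}gv^+$. Here the unknown $u^-$ acts directly on $v^+$, so the system \emph{is} triangular in the height of $-\alpha$ and yields $\xi_\alpha\in\ma$. This produces the decomposition $g\in U^-(\Phi,\ma)T(\Phi,R,\ma)U(\Phi,\ma)$; applying it to $g^{-1}$ (or repeating the argument with the lowest weight vector) gives the stated order $U(\Phi,\ma)T(\Phi,R,\ma)U^-(\Phi,\ma)$. A second minor point: ``the Borel as the stabiliser of the highest weight line'' is only correct when the highest weight is strictly dominant (otherwise the stabiliser is a larger parabolic), so either choose such a representation or, as you also suggest, intersect over the fundamental representations.
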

We will mostly use this lemma in the following form, see \cite{RN1},
Lemma 2.10.
\begin{Lem}
\label{abe-bis}
If\/ $\ma$ is an ideal of local ring\/ $R$ then
$$ G(\Phi,R,\ma)=E(\Phi,\ma)T(\Phi,R,\ma). $$
\end{Lem}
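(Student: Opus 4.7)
The plan is to deduce this from the preceding Lemma \ref{abe} by absorbing the negative unipotent factor into the positive elementary part via torus conjugation, handling separately the case where the ideal is improper.

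First I would dispose of the trivial direction $E(\Phi,\ma)T(\Phi,R,\ma)\subseteq G(\Phi,R,\ma)$, which follows from the obvious inclusions $E(\Phi,\ma)\le G(\Phi,R,\ma)$ (since every generator $x_{\al}(a)$ with $a\in\ma$ is congruent to $e$ modulo $\ma$) and $T(\Phi,R,\ma)\le G(\Phi,R,\ma)$ by definition.

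For the substantive inclusion I would split on whether $\ma$ is proper. If $\ma=R$, then $G(\Phi,R,\ma)=G(\Phi,R)$ and $T(\Phi,R,\ma)=T(\Phi,R)$, and the claim reduces to the standard Gauss decomposition $G(\Phi,R)=E(\Phi,R)T(\Phi,R)$ for local rings, which is classical. If $\ma$ is proper then, since $R$ is local, $\ma\subseteq\mm=\Rad(R)$, so Lemma \ref{abe} applies and gives a factorisation
$$ g=u_+\,t\,u_-,\qquad u_+\in U(\Phi,\ma),\ t\in T(\Phi,R,\ma),\ u_-\in U^-(\Phi,\ma). $$
Rewriting this as $g=u_+\,(t u_- t^{-1})\,t$, it suffices to show $t u_- t^{-1}\in U^-(\Phi,\ma)\subseteq E(\Phi,\ma)$, because then $u_+(tu_-t^{-1})\in E(\Phi,\ma)$ and $t\in T(\Phi,R,\ma)$.

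The key point is therefore that $T(\Phi,R)$ normalises each $U^-(\Phi,\ma)$ and indeed acts on the parameters by multiplication by units. For generators of $H(\Phi,R)$ this is the standard relation $h_{\b}(\e)x_{\al}(a)h_{\b}(\e)^{-1}=x_{\al}(\e^{\langle\al,\b\rangle}a)$, and the scaling factor lies in $R^*$, so $\e^{\langle\al,\b\rangle}a\in\ma$ whenever $a\in\ma$. For the remaining generators of $T(\Phi,R)$ beyond $H(\Phi,R)$ — i.e. the weight elements $h_{\omega}(\e)$ listed in Lemma \ref{lem-weights} — the conjugation relation of \S~2 gives $h_{\omega}(\e)x_{\al}(a)h_{\omega}(\e)^{-1}=x_{\al}(\e^{(\al,\omega)}a)$, and $\e^{(\al,\omega)}\in R^*$, so again the parameter stays in $\ma$. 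Writing $u_-$ as an ordered product of negative root unipotents $x_{\al}(a_\al)$ with $a_\al\in\ma$ and conjugating termwise, we obtain $tu_-t^{-1}\in U^-(\Phi,\ma)$, completing the argument.

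I do not expect a serious obstacle; the only point needing a brief verification is that every element of the possibly non-simply-connected torus $T(\Phi,R)$ normalises $U^-(\Phi,\ma)$, which is handled uniformly by Lemma \ref{lem-weights} together with the weight conjugation formula, including the delicate case $\Phi=\C_l$ with long roots, where conjugation by $h_{\al}(\e)$ scales by $\e^2\in R^*$ and conjugation by $h_{\varpi_l}(\e)$ scales by $\e^{\pm 1}\in R^*$.
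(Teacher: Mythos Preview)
The paper does not give its own proof of this lemma; it simply cites \cite{RN1}, Lemma~2.10. Your argument is correct and is the natural derivation from Lemma~\ref{abe}: for a proper ideal of a local ring one has $\ma\subseteq\mm=\Rad(R)$, so Abe's decomposition applies, and moving the torus factor past $u_-$ uses only that $T(\Phi,R)$ normalises each root subgroup with parameters scaled by units. One small comment: your case analysis via Lemma~\ref{lem-weights} and the special discussion of $\C_l$ is more than needed here --- the identity $h\,x_\al(\xi)\,h^{-1}=x_\al(\al(h)\xi)$ with $\al(h)\in R^*$ holds for every element $h$ of the split torus $T_P(\Phi,R)$ and every root $\al$, as a general structural fact about Chevalley groups, so no generator-by-generator verification is required.
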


%%%%%%%%%%%%%%%%%%%%%%%%%%%%%%%%%%%%%%%%%%%%%%%%%%%%%%%%%%%%%

\section{Injectivity of localisation homomorphism}

Let us fix some notation. Let $R$ be a commutative ring with
1, $S$ be a multiplica\-tive system in $R$ and $S^{-1}R$ be the
corresponding localisation. We will mostly use localisation
with respect to the two following types of multiplicative
systems.
\par\smallskip
$\bullet$ If $s\in R$ and the multiplicative system $S$ coincides
with $\langle s\rangle=\{1,s,s^2,\ldots\}$ we usually write
$\langle s\rangle^{-1}R=R_s$.
\par\smallskip
$\bullet$ If $\mm\in\Max(R)$ is a maximal ideal in $R$, and
$S=R\setminus\mm$, we usually write $(R\setminus\mm)^{-1}R=R_\mm$.
\par\smallskip
We denote by $F_S:R\map S^{-1}R$ the canonical ring homomorphism
called the localisation homomorphism. For the two special cases
mentioned above, we write $F_s:R\map R_s$ and $F_\ma:R\map R_\ma$,
respectively.
\par
When we write an element as a fraction, like $a/s$ or
$\displaystyle{\frac{a}{s}}$, we {\it always\/} think of it
as an element of some localisation $S^{-1}R$, where $s\in S$.
If $s$ were actually invertible in $R$, we would have written
$as^{-1}$ instead.

\begin{fragment}\label{dirlim}
The property of these functors which will be crucial for what follows
is that they {\it commute with direct limits\/}. In other words, if
$R=\varinjlim R_i$, where $\{R_i\}_{i\in I}$ is an inductive
system of rings, then
$X(\Phi,\varinjlim R_i)=\varinjlim X(\Phi,R_i)$. We
will use this property in the following two situations.
\par\smallskip
$\bullet$ First, let $R_i$ be the inductive system of all finitely
generated subrings of $R$ with respect to inclusion. Then
$X=\varinjlim X(\Phi,R_i)$, which reduces most of the proofs
to the case of Noetherian rings.
\par\smallskip
$\bullet$ Second, let $S$ be a multiplicative system in $R$ and $R_s$, $s\in S$,
the inductive system with respect to the localisation homomorphisms:
$F_{t}:R_s\map R_{st}$. Then $X(\Phi,S^{-1}R)=\varinjlim X(\Phi,R_s)$,
which allows to reduce localisation in any multiplicative system to
principal localisations.
\end{fragment}

Our proofs rely on the injectivity of localisation homomorphism
$F_s$. On the group $G(\Phi,R)$ itself it is seldom injective,
but its restrictions to appropriate congruence subgroups often are,
see the discussion in \cite{yoga}. Below we cite two important
typical cases, Noetherian rings \cite{B4} and semi-simple rings
\cite{SV10}.
\begin{Lem}
\label{noetherian}
Suppose\/ $R$ is Noetherian and\/ $s\in R$. Then
there exists a natural number $k$ such that the homomorphism\/
$F_s:G(\Phi,R,s^kR)\map G(\Phi,R_s)$ is injective.
\end{Lem}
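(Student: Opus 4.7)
The plan is to reduce to the general linear case via a faithful rational representation, and then exploit the Noetherian hypothesis to bound a single annihilator.

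First I would fix a faithful rational representation $G(\Phi,R)\hookrightarrow\GL(n,R)$, which by Lemma~\ref{lemma6} identifies the principal congruence subgroup $G(\Phi,R,s^kR)$ with $G(\Phi,R)\cap\GL(n,R,s^kR)$, and is compatible with localisation in the sense that the square
\[
\begin{array}{ccc}
G(\Phi,R) & \hookrightarrow & \GL(n,R)\\
\downarrow F_s & & \downarrow F_s\\
G(\Phi,R_s) & \hookrightarrow & \GL(n,R_s)
\end{array}
\]
commutes. Consequently it suffices to prove the statement with $G(\Phi,R)$ replaced by $\GL(n,R)$, that is, to find $k$ such that $F_s:\GL(n,R,s^kR)\to\GL(n,R_s)$ is injective.

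Second, I would choose $k$ using the ascending chain condition on ideals. Since $R$ is Noetherian, the chain $\Ann_R(s)\subseteq \Ann_R(s^2)\subseteq\ldots$ of annihilators stabilises; let $k_0$ be such that $\Ann_R(s^{k_0})=\Ann_R(s^{k_0+j})$ for every $j\ge0$. I claim any $k\ge k_0$ works. Take $g\in\GL(n,R,s^kR)$ with $F_s(g)=e$. Writing $g-e=s^k h$ with $h\in M_n(R)$, the condition $F_s(g)=e$ says that every entry of $s^k h$ vanishes in $R_s$, so for each $i,j$ there is some $m_{ij}$ with $s^{m_{ij}}\cdot s^k h_{ij}=0$. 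Taking $m=\max_{i,j}m_{ij}$ (finite, since the matrix has finitely many entries) gives $s^{k+m}h_{ij}=0$ for all $i,j$, i.e.\ every entry of $h$ lies in $\Ann_R(s^{k+m})$.

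Finally, by the choice of $k\ge k_0$ we have $\Ann_R(s^{k+m})=\Ann_R(s^k)$, so each $h_{ij}\in\Ann_R(s^k)$, whence $s^k h=0$ and therefore $g=e$. This gives injectivity of $F_s$ on $\GL(n,R,s^kR)$, and by the reduction above, on $G(\Phi,R,s^kR)$.

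The main technical obstacle is really just the passage from ``killed by some power of $s$'' (an a priori pointwise statement depending on $g$) to a \emph{uniform} power of $s$ that kills the entire matrix $g-e$; this is handled by finiteness of the matrix together with the stabilisation $\Ann_R(s^{k_0})=\Ann_R(s^{k_0+j})$. The rest is bookkeeping, and no $\Phi$-specific Steinberg relation is needed --- the statement is inherited from the linear group via any faithful representation.
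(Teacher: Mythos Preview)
Your proof is correct and follows essentially the same approach as the paper: both arguments hinge on the stabilisation of the annihilator chain $\Ann_R(s)\subseteq\Ann_R(s^2)\subseteq\cdots$ to show that $s^kR\to R_s$ is injective for large $k$. The paper simply states in one line that injectivity on $G(\Phi,R,s^kR)$ follows from injectivity on the ideal $s^kR$, whereas you make this reduction explicit via a faithful matrix representation and Lemma~\ref{lemma6}; the remaining annihilator argument is identical.
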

\begin{proof}
The homomorphism $F_s:G(\Phi,R,s^kR)\map G(\Phi,R_s)$ is
in\-jec\-ti\-ve when\-ever $F_s:s^kR\map R_s$ is injective.
Let $\ma_i=\Ann_R(s^i)$ be the annihilator of $s^i$ in $R$.
Since $R$ is Noetherian, there exists $k$ such that
$\ma_k=\ma_{k+1}=\ldots$. If $s^ka$ vanishes in $R_s$, then
$s^is^ka=0$ for some $i$. But since $\ma_{k+i}=\ma_k$, already
$s^ka=0$ and thus $s^R$ injects in $R_s$.
\end{proof}
\begin{Lem}
\label{semisimple}
If\/ $\Rad(R)=0$, then\/ $F_s:G(\Phi,R,sR)\map G(\Phi,R_s)$
is injective for all\/ $s\in R$, $s\neq 0$.
\end{Lem}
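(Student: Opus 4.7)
The plan is to reduce the statement to an elementary fact about $R$ itself: that the restriction of the localisation map $F_s\colon R\to R_s$ to the principal ideal $sR$ is injective, provided $\Rad(R)=0$. Once this is in hand, fix a faithful rational representation $G(\Phi,R)\le\GL(n,R)$ and invoke Lemma~\ref{lemma6} to identify $G(\Phi,R,sR)$ with $G(\Phi,R)\cap\GL(n,R,sR)$. If $g\in G(\Phi,R,sR)$ lies in the kernel of $F_s$, then every matrix entry of $g-e$ belongs to $sR$ and becomes zero in $R_s$. Injectivity of $F_s$ on $sR$ then forces each such entry to vanish, so $g=e$.

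The core step is therefore the following: if $a\in sR$ and $s^Na=0$ for some $N\ge 0$, then $a=0$. I would argue this by showing that $a$ lies in every maximal ideal $\mm\in\Max(R)$. Write $a=sb$. If $s\in\mm$, then already $a=sb\in\mm$. Otherwise the image $\bar s$ of $s$ in the residue field $R/\mm$ is a unit, and the relation $\overline{s^Na}=0$ forces $\bar a=0$, i.e.\ $a\in\mm$. Hence $a\in\bigcap_{\mm\in\Max(R)}\mm=\Rad(R)=0$.

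No genuine obstacle is expected: the argument is essentially formal, and the hypothesis $\Rad(R)=0$ enters at exactly one point, namely at the intersection step. What is worth flagging is that the plain map $F_s\colon R\to R_s$ is in general \emph{not} injective --- its kernel is $\bigcup_{N\ge 0}\Ann_R(s^N)$ --- and the extra factor of $s$ built into the congruence subgroup $G(\Phi,R,sR)$ is precisely what converts the Jacobson-semisimplicity of $R$ into injectivity of the localisation homomorphism at the group level. By way of comparison, this also clarifies why the Noetherian version, Lemma~\ref{noetherian}, needs an $s^k$ in the level rather than just $s$: the Noetherian hypothesis merely bounds the ascending annihilator chain $\Ann_R(s)\subseteq\Ann_R(s^2)\subseteq\ldots$, whereas here $\Rad(R)=0$ avoids any such bound but demands that each matrix entry already lie in $sR$ so as to supply the one extra factor of $s$ required by the maximal ideal argument.
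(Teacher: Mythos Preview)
Your proof is correct and follows the same reduction as the paper: it suffices to show that $F_s\colon sR\to R_s$ is injective, and you supply the matrix-entry justification for this reduction explicitly via Lemma~\ref{lemma6}, which the paper leaves implicit. The only difference is in the one-line ring-theoretic step: the paper observes that $s^m(s\xi)=0$ implies $(s\xi)^{m+1}=s^m(s\xi)\cdot\xi^m=0$, so $s\xi$ is nilpotent and hence lies in $\Rad(R)=0$; you instead run a case split over maximal ideals. Both arguments are equally short and valid, and your extra commentary comparing with Lemma~\ref{noetherian} is apt.
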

\begin{proof}
It suffices to prove that $F_s:sR\map R_s$ is injective.
Suppose that $s\xi\in sR$ goes to $0$ in $R_s$. Then there
exists an $m\in\Nat$ such that $s^ms\xi=0$. It follows that
$(s\xi)^{m+1}=0$ and since $R$ is semi-simple, $s\xi=0$.
\end{proof}
In \cite{RN1} we used reduction to Noetherian rings, whereas in
\cite{SV10} reduction to semi-simple rings was used instead.
\goodbreak

%%%%%%%%%%%%%%%%%%%%%%%%%%%%%%%%%%%%%%%%%%%%%%%%%%%%%%%%%%%%%%%%%%%%%%

\section{Levels of mixed commutators}\label{level}

In this section we establish some obvious facts, concerning
the lower and the upper levels of mixed commutators
$$ \big[E(\Phi,R,\ma),E(\Phi,R,\mb)\big]\le \big[G(\Phi,R,\ma),G(\Phi,R,\mb)\big]. $$
\par
Unlike most other results of the present paper, the next lemma
also holds for $\rk(\Phi)=1$.
\begin{Lem}
Then for any two ideals\/ $\ma$ and\/ $\mb$ of the ring\/ $R$ one has
$$ E(\Phi,R,\ma,\mc)E(\Phi,R,\mb,\md)=E(\Phi,R,\ma+\mb,\mc+\md). $$
\end{Lem}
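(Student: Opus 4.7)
The plan is to prove the two inclusions separately, after first observing that the set $E(\Phi,R,\ma,\mc)E(\Phi,R,\mb,\md)$ is actually a subgroup of $E(\Phi,R)$. Since both factors are, by definition, normal subgroups of $E(\Phi,R)$, their product is a (normal) subgroup.

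The inclusion ``$\subseteq$'' is immediate: from $\ma\le\ma+\mb$ and $\mc\le\mc+\md$, together with the monotonicity of the construction $E(\Phi,R,-,-)$ in its arguments, both factors on the left lie in $E(\Phi,R,\ma+\mb,\mc+\md)$.

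For the reverse inclusion ``$\supseteq$'', by Lemma~\ref{lemma2} it suffices to check that every generator
\[ z_{\al}(\eta,\zeta)=x_{-\al}(\zeta)x_{\al}(\eta)x_{-\al}(-\zeta) \]
of $E(\Phi,R,\ma+\mb,\mc+\md)$ lies in $E(\Phi,R,\ma,\mc)E(\Phi,R,\mb,\md)$, where $\eta\in\ma+\mb$ for $\al\in\Phi_s$, $\eta\in\mc+\md$ for $\al\in\Phi_l$, and $\zeta\in R$. Write $\eta=\xi+\xi'$, with $\xi$ in the first summand and $\xi'$ in the second. Using additivity (R1), $x_{\al}(\eta)=x_{\al}(\xi)x_{\al}(\xi')$, so conjugating by $x_{-\al}(\zeta)$ yields
\[ z_{\al}(\eta,\zeta)=z_{\al}(\xi,\zeta)\,z_{\al}(\xi',\zeta), \]
with the first factor in $E(\Phi,R,\ma,\mc)$ and the second in $E(\Phi,R,\mb,\md)$. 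Since the product set is a subgroup, it contains all words in such generators, hence all of $E(\Phi,R,\ma+\mb,\mc+\md)$.

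There is essentially no obstacle here; the only subtlety to watch is that for multiply laced systems one must invoke the two-parameter generating set of Lemma~\ref{lemma2} (applied separately to long and short roots) rather than pretending that $E(\Phi,R,\ma,\mc)$ is generated by the $x_{\al}(\xi)$ themselves, and that the product set is a subgroup only because both factors are \emph{normal} in $E(\Phi,R)$ --- not just because they are subgroups. The argument works uniformly for all $\rk(\Phi)\ge 1$ since it uses only additivity of root unipotents and the description of the generators as conjugates $z_{\al}(\xi,\zeta)$.
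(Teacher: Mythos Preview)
Your proof is correct and follows essentially the same approach as the paper: both use additivity of root unipotents together with the fact that the product of two normal subgroups is a normal subgroup. The paper's argument is marginally more economical: instead of invoking Lemma~\ref{lemma2} and decomposing each $z_{\al}(\eta,\zeta)$, it simply observes that the left hand side already contains the elements $x_{\al}(a+b)=x_{\al}(a)x_{\al}(b)$, and since the left hand side is normal in $E(\Phi,R)$, it contains their normal closure, which is the right hand side by definition. This bypasses Lemma~\ref{lemma2} entirely and is what makes the rank~$1$ case go through without further comment---your appeal to Lemma~\ref{lemma2} for rank~$1$ is not quite justified, since that lemma is stated (and typically proved) under the assumption $\rk(\Phi)\ge 2$.
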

\begin{proof}
Additivity of the elementary root unipotents
$x_{\a}(a+b)=x_{\a}(a)x_{\a}(b)$, where $\a\in\Phi$,
while $a\in\ma$, $b\in\mb$ for $\a\in\Phi_s$, and
$a\in\mc$, $b\in\md$ for $\a\in\Phi_l$, implies that the left
hand side contains {\it generators\/} of the right hand side.
The product of two normal subgroups is normal in $E(\Phi,R)$.
\end{proof}
As a preparation to the calculation of lower level, we generalise
Lemma~\ref{lemtits}. It is a toy version of the main results
of the present paper, whose proof heavily depends on Lemma~\ref{lemma2}.
There one considers
$$ z_{\a}(ab,\zeta)={}^{x_{-\a}(\zeta)}x_{\a}(ab). $$
\noindent
The idea is to express $x_{\a}(ab)$ as the commutator of two root
elements $[x_{\b}(a),x_{\gamma}(b)]$, where $\b+\gamma=\a$, plus,
possibly, some tail. Now, neither the roots $\b,\gamma$, nor the
roots appearing in the tail, are opposite to $-\a$, and thus we
can distribute conjugation by $x_{-\a}(\zeta)$ and apply the
Chevalley commutator formula to each occuring factor. The first
explicit appearance of this idea, which we were able to trace in
the literature, was in Bass---Milnor---Serre foundational work
\cite{bassmilnorserre}.
\par
In the following lemma, we should distinguish the ideal $\ma^2$,
generated by the products $ab$, where $a,b\in\ma$, from the ideal
$\ma\supb{2}$, generated by $a^2$, where $a\in\ma$. Clearly,
when $2\in R^*$ these ideals coincide, but this case is trivial
anyway.
\begin{Lem}\label{nllnnn}
Let\/ $\rk(\Phi)\ge 2$ and further let\/ $\ma$ and\/ $\mb$ be two
ideals of\/ $R$. Assume that either $\Phi\neq\C_l$, or $2\in R^*$.
Then one has
$$ E(\Phi,R,\ma\mb)\le E(\Phi,\ma+\mb). $$
\noindent
In the exceptional case, where $\Phi=\C_l$ and $2\notin R^*$ one has
$$ E(\Phi,R,\ma\mb,\ma\mb\supb{2}+2\ma\mb+\ma\supb{2}\mb)\le 
E(\Phi,\ma+\mb). $$
\end{Lem}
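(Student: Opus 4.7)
The plan is to reduce everything to checking individual generators of the relative elementary subgroup and then to push each generator into $E(\Phi,\ma+\mb)$ using the Chevalley commutator formula. By Lemma~\ref{lemma2}, the relative elementary subgroup on the left-hand side is generated by elements $z_\alpha(\xi,\zeta)=x_{-\alpha}(\zeta)x_\alpha(\xi)x_{-\alpha}(-\zeta)$, where $\zeta\in R$ and $\xi$ runs over the appropriate parameter: $\xi\in\ma\mb$ when $\alpha$ is short or $\Phi\neq\C_l$, and $\xi\in\ma\mb\supb{2}+2\ma\mb+\ma\supb{2}\mb$ when $\Phi=\C_l$ and $\alpha$ is long. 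Using additivity of root subgroups (Steinberg relation~(R1)), it suffices to verify $z_\alpha(\xi,\zeta)\in E(\Phi,\ma+\mb)$ when $\xi$ is a monomial, namely $\xi=ab$, $\xi=ab^2$, $\xi=a^2b$, or $\xi=2ab$ with $a\in\ma$ and $b\in\mb$.

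Next, I would choose auxiliary roots $\gamma,\delta\in\Phi$ that split $\alpha$ as prescribed by Lemma~\ref{lem1}: for short $\alpha$ (or any $\alpha$ when $\Phi\neq\C_l$) write $\alpha=\gamma+\delta$ with $N_{\gamma\delta11}=1$; for long $\alpha$ in $\C_l$ write $\alpha=\gamma+2\delta$ (or $2\gamma+\delta$) with $N_{\gamma\delta12}=1$ (respectively $N_{\gamma\delta21}=1$). For the subcase $\xi=2ab$ I would use instead the standard decomposition of the long root $\alpha$ as a sum of two short roots with Chevalley constant $\pm 2$ (for instance, $2e_i=(e_i-e_j)+(e_i+e_j)$ in the usual realization of $\C_l$). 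Applying the Chevalley commutator formula~(R2) to $[x_\gamma(a),x_\delta(b)]$ isolates a factor equal to $x_\alpha(\pm\xi)$, and one can solve for $x_\alpha(\xi)$ as the product of this commutator and the inverses of the remaining Chevalley factors $x_\eta(Na^ib^j)$, with $\eta=i\gamma+j\delta\neq\alpha$. All such tail factors have entry in $\ma\cap\mb\subseteq\ma+\mb$, so the tail already lies in $E(\Phi,\ma+\mb)$.

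It then remains to analyse $z_\alpha(\xi,\zeta)={}^{x_{-\alpha}(\zeta)}x_\alpha(\xi)$. Conjugating the identity just obtained by $x_{-\alpha}(\zeta)$ turns the commutator into the commutator of the conjugated root elements and the tail into its conjugate. Since $\gamma$, $\delta$, and every auxiliary root $\eta$ appearing differ from $-\alpha$, the Chevalley commutator formula applies to each conjugation ${}^{x_{-\alpha}(\zeta)}x_\eta(t)=x_\eta(t)\cdot[x_{-\alpha}(\zeta),x_\eta(t)]^{-1}$. Every new factor produced has entry divisible by $t$, where $t$ is either $a\in\ma$, or $b\in\mb$, or some $Na^pb^q\in\ma\cap\mb$, so it again lies in $E(\Phi,\ma+\mb)$. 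Collecting the factors gives $z_\alpha(\xi,\zeta)\in E(\Phi,\ma+\mb)$, as required.

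The main obstacle is the case-by-case verification that the roots $\gamma,\delta$ furnished by Lemma~\ref{lem1} (and the subsidiary splitting for $\xi=2ab$) can be arranged so that none of the auxiliary roots $\eta=i\gamma+j\delta$ entering any Chevalley expansion coincides with $-\alpha$; this is routine by inspection of the rank-$2$ subsystems of $\Phi$, but is the place where the structure of $\C_l$ must be handled separately. The subcase $\xi=2ab$ in the symplectic setting is exactly what forces the extra summand $2\ma\mb$ in the form parameter of the exceptional statement: in $\C_l$ no decomposition of a long root via Lemma~\ref{lem1} produces the plain product $ab$ with Chevalley constant $1$, only with constant $\pm 2$.
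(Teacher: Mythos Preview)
Your approach is essentially identical to the paper's: reduce to the generators $z_\alpha(\xi,\zeta)$ via Lemma~\ref{lemma2}, split $x_\alpha(\xi)$ as a Chevalley commutator (with tail) using Lemma~\ref{lem1}, and then conjugate factor by factor by $x_{-\alpha}(\zeta)$. One slip to fix: the condition you need in order to apply the Chevalley commutator formula to ${}^{x_{-\alpha}(\zeta)}x_\eta(t)$ is that $\eta$ is not \emph{opposite} to $-\alpha$, i.e.\ $\eta\neq\alpha$, not $\eta\neq-\alpha$ as you wrote (and your formula ${}^{x_{-\alpha}(\zeta)}x_\eta(t)=x_\eta(t)\cdot[x_{-\alpha}(\zeta),x_\eta(t)]^{-1}$ is off; it should be $[x_{-\alpha}(\zeta),x_\eta(t)]\cdot x_\eta(t)$). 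Fortunately $\gamma,\delta$ and all tail roots $i\gamma+j\delta\neq\alpha$ automatically satisfy $\eta\neq\alpha$, so the argument goes through once the condition is stated correctly.
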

\begin{proof} By Lemma~\ref{lemma2} is suffices to find conditions
on $\xi$ which imply that $z_{\a}(\xi,\zeta)\in E(\Phi,\ma+\mb)$ for
each root $\a\in\Phi$ and $\zeta\in R$.
\par\smallskip
{\bf General case.}
First, assume that $\al$ is short or $\Phi\neq\C_l$. By Lemma~\ref{lem1}
there exist roots $\beta$ and $\gamma$ such that $\beta+\gamma=\alpha$
and $N_{\beta\gamma 11}=1$. In this case we prove that
$z_{\a}(ab,\zeta)\in E(\Phi,\ma+\mb)$ for each root $\a\in\Phi$ and all
$a\in\ma$, $b\in\mb$ and $\zeta\in R$. With this end we decompose
$x_{\alpha}(ab)$ as follows:
$$ x_{\alpha}(ab)= [x_{\beta}(a),x_{\gamma}(b)]
\prod x_{i\beta+j\gamma}(-N_{\gamma\delta ij}a^ib^j), $$
\noindent
where the product on the right hand side is taken over all roots
$i\beta+j\gamma\neq\alpha$. Conjugating this equality by $x_{-\a}(\zeta)$,
we obtain an expression of $z_{\a}(ab,\zeta)$ as a product
of elementary root unipotents belonging either to $E(\Phi,\ma)$ or
to $E(\Phi,\mb)$, or, as in the case of factors occuring in the tail,
even to $E(\Phi,\ma\mb)$.
\par\smallskip
{\bf Case $\Phi=\C_l$.}
This leaves us with the analysis of the exceptional case, where
$\Phi=\C_l$ and the root $\a$ is long. We will have to use several
instances of the Chevalley commutator formula.
\par
First of all, there exist {\it short\/} roots $\beta$ and $\gamma$ such
that $\beta+\gamma=\alpha$ and $N_{\beta\gamma 11}=2$. Thus,
$$ x_{\alpha}(2ab)= [x_{\beta}(a),x_{\gamma}(b)], $$
\noindent
for all $a\in\ma$ and $b\in\mb$. Now, exactly the same argument, as in the
general case, shows that $z_{\a}(2ab,\zeta)\in E(\Phi,\ma+\mb)$. This
shows that when $2\in R^*$ we again recover the general answer.
\par
By Lemma~\ref{lem1} there exist a long root root $\beta$ and a short
root $\gamma$ such that $\beta+2\delta=\alpha$ and $N_{\beta\gamma 12}=\pm1$.
Without loss of generality we can assume that $N_{\beta\gamma 12}=\pm1$,
otherwise we would just replace the $x_{\gamma}(a)$ in the following
formula by $x_{\gamma}(-a)$. We decompose $x_{\beta}(s^ht^ma)$ as follows:
$$ x_{\beta}(ab^2)=[x_{\gamma}(a),x_{\delta}(b)]
x_{\gamma+\delta}(-N_{\gamma\delta 11}ab), $$
\noindent
or all $a\in\ma$ and $b\in\mb$. Now, exactly the same argument, as in the
general case, shows that $z_{\a}(ab^2,\zeta)\in E(\Phi,\ma+\mb)$.
\par
Interchanging $a$ and $b$ in the above formula, we see, that
$z_{\a}(ab^2,\zeta)\in E(\Phi,\ma+\mb)$. To finish the proof, it remains
only to refer to the preceding lemma.
\end{proof}

In the next lemma we calculate the {\it lower\/} level of the mixed
commutator subgroup.
\begin{Lem}\label{nlln}
Let\/ $\rk(\Phi)\ge 2$. In the cases\/ $\Phi=\C_2,\G_2$ assume
that $R$ does not have residue fields\/ ${\Bbb F}_{\!2}$ of\/ $2$ elements and
in the case\/ $\Phi=\C_l$, $l\ge 2$, assume additionally that
any\/ $c\in R$ is contained in the ideal\/ $c^2R+2cR$.
\par
Then for any two ideals\/ $\ma$ and\/ $\mb$
of the ring\/ $R$ one has the following inclusion
$$ E(\Phi,R,\ma\mb)\le\big [E(\Phi,R,\ma),E(\Phi,R,\mb)\big]. $$
\end{Lem}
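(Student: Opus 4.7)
The plan is to show that each generator $z_\alpha(\xi,\zeta)$ of $E(\Phi,R,\ma\mb)$ furnished by Lemma~\ref{lemma2} lies in the commutator $[E(\Phi,R,\ma),E(\Phi,R,\mb)]$. Since both $E(\Phi,R,\ma)$ and $E(\Phi,R,\mb)$ are normal in $E(\Phi,R)$, so is their mutual commutator; in particular it is stable under conjugation by $x_{-\alpha}(\zeta)$, and so the $z$-conjugation in the definition of $z_\alpha(\xi,\zeta)$ can be discarded. Under the $\C_l$ hypothesis $c\in c^2R+2cR$ the long-root level $(\ma\mb)^2+2\ma\mb$ collapses back to $\ma\mb$, so by additivity of the root unipotents everything reduces to proving
$$x_\alpha(ab)\in\big[E(\Phi,R,\ma),E(\Phi,R,\mb)\big]$$
for a single root $\alpha$ and a single pair $(a,b)\in\ma\times\mb$.

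For $\alpha$ short, or $\Phi\ne\C_l$, Lemma~\ref{lem1} supplies a decomposition $\alpha=\beta+\gamma$ with $N_{\beta\gamma11}=1$, and the Chevalley commutator formula reads
$$x_\alpha(ab)=\big[x_\beta(a),x_\gamma(b)\big]\cdot\prod_{(i,j)\ne(1,1)}x_{i\beta+j\gamma}(-N_{\beta\gamma ij}a^ib^j).$$
The leading commutator lies directly in $[E(\Phi,R,\ma),E(\Phi,R,\mb)]$ since $x_\beta(a)\in E(\Phi,R,\ma)$ and $x_\gamma(b)\in E(\Phi,R,\mb)$. Each tail factor $x_\delta(c\,a^ib^j)$ rewrites as $x_\delta((ca^i)\cdot b^j)$ with $ca^i\in\ma$ and $b^j\in\mb$, which has precisely the same form as the original expression but at a new root $\delta=i\beta+j\gamma$. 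Choosing $\beta$ and $\gamma$ of the same sign as $\alpha$ one has the absolute height of $\delta$ strictly greater than that of $\alpha$ whenever $i+j\ge 3$, so a reverse induction on absolute height closes this case. Simple roots and other configurations lacking a same-sign decomposition are first transported to non-simple roots of the same length by a Weyl conjugation $w_\mu(1)\in E(\Phi,R)$, which is legal because the target commutator subgroup is normal in $E(\Phi,R)$.

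The long-root case in $\C_l$ is the exceptional point and the reason for the additional assumption. Using $ab\in(ab)^2R+2(ab)R$, I write $ab=a^2b^2r+2abs$ and treat the two summands separately. For $x_\alpha(2abs)$, the short-short decomposition $\alpha=\beta+\gamma$ with $N_{\beta\gamma11}=2$ (available for long $\C_l$ roots) produces no tail at all, because no $i\beta+j\gamma$ with $i+j\ge 3$ is a root, so $x_\alpha(2abs)=[x_\beta(a),x_\gamma(bs)]^{\pm1}$ lies in $[E(\Phi,R,\ma),E(\Phi,R,\mb)]$ directly. For $x_\alpha(a^2b^2r)$, the long-short decomposition in the second half of Lemma~\ref{lem1}, with $\alpha=\beta+2\gamma$ and $N_{\beta\gamma12}=1$, gives
$$\big[x_\beta(a^2r),x_\gamma(b)\big]=x_{\beta+\gamma}(\pm a^2rb)\cdot x_\alpha(\pm a^2b^2r),$$
so $x_\alpha(a^2b^2r)$ equals a commutator in $[E(\Phi,R,\ma),E(\Phi,R,\mb)]$ multiplied by the short-root element $x_{\beta+\gamma}((a^2r)\cdot b)$, whose parameter lies in $\ma\mb$ and which is handled by the already-established short-root case.

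The main obstacle is the inductive bookkeeping: the long-root analysis in $\C_l$ reduces to the short-root case, so the short-root argument must be self-contained, with short-short decompositions used throughout so that the induction never calls back into the long-root case. The role of the hypothesis $c\in c^2R+2cR$ is precisely to bridge the two genuine shapes $2uv$ and $uv^2$ that the Chevalley formula delivers for long roots of $\C_l$.
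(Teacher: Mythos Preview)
Your reduction to showing $x_\alpha(ab)\in H:=[E(\Phi,R,\ma),E(\Phi,R,\mb)]$ for each $\alpha$ and each $a\in\ma$, $b\in\mb$ matches the paper, and for rank $\ge 3$ your height-induction scheme can be made to work (indeed the paper takes a shortcut here, observing that such $\alpha$ embed in an $\A_2$ subsystem so that the Chevalley commutator has \emph{no} tail at all, avoiding induction). Your handling of long roots in $\C_l$ via the hypothesis $c\in c^2R+2cR$ is also essentially the paper's argument.

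The gap is in the rank-2 cases $\C_2$ and $\G_2$, where your induction becomes circular. In $\C_2$ the only same-sign decomposition of the short root $\alpha_1+\alpha_2$ with $N_{11}=1$ is $\alpha_1+\alpha_2$, whose tail sits at the long root $2\alpha_1+\alpha_2$; but your long-root argument then feeds back a short-root element $x_{\beta+\gamma}(a^2rb)$, and you are back where you started with new parameters. In $\G_2$ the decomposition of the short root $\alpha_1+\alpha_2$ produces a short tail term $x_{2\alpha_1+\alpha_2}(\pm a^2b)$ at another short root; Weyl-conjugating this back to $\alpha_1+\alpha_2$ reproduces the original problem. In neither case does the recursion terminate, and indeed you never invoke the hypothesis that $R$ has no residue field ${\Bbb F}_{\!2}$ in the $\G_2$ analysis, which is a tell.

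The paper breaks this circularity by a different device: it compares $[x_\beta(ca),x_\gamma(b)]$ with $[x_\beta(a),x_\gamma(cb)]$ for a free parameter $c\in R$, so that the leading terms cancel and one extracts $x_\delta((c^2-c)\,\cdot)\in H$; since the elements $c^2-c$ generate the unit ideal precisely when $R$ has no residue field ${\Bbb F}_{\!2}$, this yields $x_\delta(a^2b),x_\delta(ab^2)\in H$ directly, which is then enough to peel off the tail factors without recursion. You should incorporate this trick for the rank-2 cases; the rest of your argument is sound.
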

\begin{proof}
The mixed commutator of two normal subgroups is normal. Thus, it
suffices to prove that
$$ E(\Phi,\ma\mb)\le\big[E(\Phi,R,\ma),E(\Phi,R,\mb)\big], $$
\noindent
and the result will automatically follow. Actually, we prove a
slightly stronger inclusion
$E(\Phi,\ma\mb)\le\big[E(\Phi,\ma),E(\Phi,\mb)\big]$.
This more precise formula is not used in the present paper, but
it still might be useful to record this for future applications.
Denote $H=\big[E(\Phi,\ma),E(\Phi,\mb)\big]$. Then our claim amounts
to the following. Let $\a\in\Phi$, $a\in\ma$ and $b\in\mb$. Then
$x_{\a}(ab)\in H$.
\par\smallskip
$\bullet$ First, assume that $\a$ can be embedded in a root
system of type $\A_2$. Then there exist roots $\beta,\gamma\in\Phi$,
of the same length as $\a$ such that $\a=\beta+\gamma$, and
$N_{\beta\gamma 11}=1$. Then
$$ [x_{\beta}(a),x_{\gamma}(b)]=x_{\a}(ab)\in H. $$
\noindent
This proves the lemma for simply laced Chevalley groups, and for the
Chevalley group of type $\F_4$. It also proves necessary inclusions for
{\it short\/} roots in Chevalley groups of type $\C_l$, $l\ge 3$, and
for {\it long\/} roots in Chevalley groups of type $\B_l$, $l\ge 3$,
and of type $\G_2$.
\par\smallskip
$\bullet$ Next, assume that $\a$ can be imbedded in a root
system of type $\C_2$ as a {\it long\/} root. We wish to prove that
$x_{\a}(ab)\in H$, where $a\in\ma$ and $b\in\mb$. As the first
approximation, we prove that $x_{\a}(a^2b),x_{\a}(ab^2)\in H$.
\noindent
There exist roots $\beta,\gamma\in\Phi$, such that $\a=\beta+2\gamma$
and $N_{\beta\gamma 11}=1$. Clearly, in this case $\beta$ is long and
$\gamma$ is short. Take an arbitrary $c\in R$. Then
$$ [x_{\beta}(ca),x_{\gamma}(b)]=
x_{\beta+\gamma}(cab)x_{\a}(\pm cab^2)\in H, $$
\noindent
whereas
$$ [x_{\beta}(a),x_{\gamma}(cb)]=
x_{\beta+\gamma}(cab)x_{\a}(\pm c^2ab^2)\in H. $$
\noindent
Comparing these two inclusions we can conclude that
$x_{\a}(\pm (c^2-c)ab^2)\in H$.
\noindent
Since by assumption $R$ does not have residue field of $2$ elements,
the ideal generated by $c^2-c$, where $c\in R$, is not contained in any
maximal ideal, and thus coincides with $R$. It follows that
$x_{\a}(ab^2)\in H$. Interchanging $a$ and $b$ we see that
$x_{\a}(a^2b)\in H$.
\par\smallskip
$\bullet$ Now, assume that $\a$ can be imbedded in a root system of
type $\C_2$ as a {\it short\/} root. Choose the same $\beta$ and $\gamma$
as in the preceding item. In other words, $\a=\beta+\gamma$, $\beta$ is
long, $\gamma$ is short, and $N_{\beta\gamma 11}=1$. Then
$$ [x_{\beta}(a),x_{\gamma}(b)]=x_{\alpha}(ab)x_{\a+\gamma}(\pm ab^2). $$
\noindent
From the previous item we already know that the second factor belongs
to $H$ {\it provided\/} that $R$ does not have residue field of $2$
elements. Actually, from the first item, we know that for $\Phi=\B_l$,
$l\ge 3$, even the stronger inclusion $x_{\a+\gamma}(\pm ab)\in H$
holds without any such assumption.
\par
Thus, in both cases we can conclude that $x_{\alpha}(ab)\in H$, 
for a {\it short\/} root $\alpha$. Again, already from the first item 
we know that for $\Phi=\C_l$, $l\ge 3$, this inclusion holds without 
any assumptions on $R$.
\par\smallskip
On the other hand, a {\it long\/} root $\alpha$ of $\Phi=\C_l$,
$l\ge 3$, cannot be embedded in an irreducible rank 2 subsystem 
other than $\C_2$. This leaves us with the analysis of exactly two 
rank 2 cases: $\Phi=\C_2$ and $\a$ is long and $\Phi=\G_2$ and $\a$ 
is short. This is where one needs the additional assumptions on $R$.
\par\smallskip
$\bullet$ Let $\Phi=\C_2$ and $\a$ is long. Then $\a$ can be expressed
as $\a=\beta+\gamma$ for two {\it short\/} roots $\beta$ and $\gamma$.
Interchanging $\beta$ and $\gamma$ we can assume that
$N_{\beta\gamma 11}=2$. Then one has
$$ [x_{\beta}(a),x_{\gamma}(b)]=x_{\a}(2ab)\in H. $$
\noindent
One the other hand, we already know that $x_{\a}(a^2b)\in H$.
Since by assumption the ideal generated by $2a$ and $a^2$
contains $a$, we can conclude that $x_{\a}(ab)\in H$.
\par\smallskip
$\bullet$ Finally, let $\Phi=\G_2$ and $\a$ is short. We wish to
prove that $x_{\a}(ab)\in H$, where $a\in\ma$ and $b\in\mb$.
With this end we argue in the same way as for the case of $\Phi=\C_2$.
Actually, now it is even easier, since we already have necessary
inclusions for {\it long\/} roots.
\par
Again, as the first approximation, we prove that
$x_{\a}(a^2b),x_{\a}(ab^2)\in H$. With this end, express $\a$
as $\a=\beta+2\gamma$, where $\beta$ is short, $\gamma$ is long,
and $N_{\beta\gamma 11}=1$. Take an arbitrary $c\in R$. Then
$$ [x_{\beta}(ca),x_{\gamma}(b)]=
x_{\a}(cab)x_{\alpha+\beta}(\pm c^2a^2b)
x_{3\beta+\gamma}(\pm c^3a^3b)x_{3\beta+2\gamma}(\pm c^3a^3b^2)\in H, $$
\noindent
whereas
$$ [x_{\beta}(a),x_{\gamma}(cb)]=
x_{\a}(cab)x_{\alpha+\beta}(\pm ca^2b)
x_{3\beta+\gamma}(\pm ca^3b)x_{3\beta+2\gamma}(\pm c^2a^3b^2)\in H. $$
\noindent
Since the roots $3\beta+\gamma$ and $3\beta+2\gamma$ are long, from
the first item we already know that the corresponding root elements
belong to $H$. Thus,
$$ x_{\a}(cab)x_{\alpha+\beta}(\pm c^2a^2b),
x_{\a}(cab)x_{\alpha+\beta}(\pm ca^2b)\in H. $$
\noindent
Comparing these inclusions, we conclude that
$x_{\alpha+\beta}(\pm (c^2-c)a^2b)\in H$ for all $c\in R$. Again, since
$R$ does not have residue field of two elements, it follows that
$x_{\alpha+\beta}(\pm a^2b)\in H$. Interchanging $a$ and $b$,
we see that $x_{\a+\beta}(ab^2)\in H$.
\par
It only remains to look at the commutator
$$ [x_{\beta}(a),x_{\gamma}(b)]=
x_{\a}(ab)x_{\alpha+\beta}(\pm a^2b)
x_{3\beta+\gamma}(\pm a^3b)x_{3\beta+2\gamma}(\pm a^3b^2)\in H. $$
\noindent
Since all elementary factors on the right hand side, apart from the
first one, already belong to $H$, we can conclude that this first
factor also belongs to $H$, in other words, $x_{\a}(ab)\in H$, as
claimed
\end{proof}
Not to overburden the present paper with technical details, here
we only consider the usual relative subgroups depending on one
parameter. To illustrate, why we do this, let us state a general
version of Lemma~17, with form parameters, which can be easily
derived from the proof of Lemma 17.
\begin{Lem}\label{nllnbis}
Let\/ $\rk(\Phi)\ge 2$. Then for any two for ideals\/ $\ma$ and\/ $\mb$
of the ring\/ $R$ one has the following inclusions
$$ E(\Phi,R,\ma\mb,i_{\Phi}\ma\mb+\ma\supb{2}\md+\mb\supb{2}\mc)\le
\big [E(\Phi,R,\ma,\mc),E(\Phi,R,\mb,\md)\big]. $$
\end{Lem}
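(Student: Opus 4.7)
The plan is to closely follow the proof of Lemma~\ref{nlln}, with careful bookkeeping of which ideal or additive subgroup each parameter in a Chevalley decomposition is drawn from. Since $H:=\bigl[E(\Phi,R,\ma,\mc),E(\Phi,R,\mb,\md)\bigr]$ is normal in $E(\Phi,R)$, Lemma~\ref{lemma2} reduces the task to showing that the unrelativised generators $x_\alpha(\xi)$ of the left-hand side lie in $H$: namely $\xi\in\ma\mb$ for short $\alpha$, and $\xi\in i_{\Phi}\ma\mb+\ma\supb{2}\md+\mb\supb{2}\mc$ for long $\alpha$.

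For each root $\alpha$ I would use the same decomposition $\alpha=\beta+\gamma$ or $\alpha=\beta+2\gamma$ as in Lemma~\ref{nlln}, but now respecting admissibility: a parameter paired with a long root must be drawn from $\mc$ or $\md$, and a parameter paired with a short root from $\ma$ or $\mb$. Expanding $[x_\beta(a),x_\gamma(b)]$ via the Chevalley commutator formula yields the desired $x_\alpha$ alongside tail unipotents $x_{i\beta+j\gamma}$ with coefficients of the form $N\cdot a^ib^j$. The three-summand long-root form parameter $i_{\Phi}\ma\mb+\ma\supb{2}\md+\mb\supb{2}\mc$ is tailored exactly to receive these: short-short commutators whose image is a long root carry a structure constant divisible by $i_{\Phi}$, contributing to $i_{\Phi}\ma\mb$, while tails of shape $a^2b$ or $ab^2$ arising from short--long decompositions contribute to $\ma\supb{2}\md$ or $\mb\supb{2}\mc$, depending on which of $a,b$ is the short parameter.

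The rank-$2$ exceptional cases $\Phi=\C_2$ (with $\alpha$ long) and $\Phi=\G_2$ (with $\alpha$ short) demand the $c$-trick of Lemma~\ref{nlln}: comparing $[x_\beta(ca),x_\gamma(b)]$ with $[x_\beta(a),x_\gamma(cb)]$ isolates $x_\alpha\bigl((c^2-c)\cdot\text{monomial}\bigr)$, and the residue-field hypothesis on $R$ delivers the monomial itself. In the relativised computation this now produces $x_\alpha(a^2b)$ and $x_\alpha(ab^2)$ rather than $x_\alpha(ab)$, which is precisely why the long-root part of the target form parameter needs $\ma\supb{2}\md+\mb\supb{2}\mc$; the auxiliary short-short $\C_2$ identity $[x_\beta(a),x_\gamma(b)]=x_\alpha(\pm 2ab)$ supplies the $i_{\Phi}\ma\mb$ contribution in the doubly-laced case, and an analogous triply-laced identity does the same for $\G_2$.

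The main obstacle I anticipate is combinatorial rather than conceptual: one must check for every root and every rank-$2$ subsystem that each tail unipotent either lies in $H$ by a case already treated (e.g.\ $\G_2$ long-root tails reduce to the first, $\A_2$-embeddable case), or has coefficient demonstrably inside the target form parameter. A delicate point is distinguishing the ideal $\mb^2$ generated by products $b_1b_2$ from the additive subgroup $\mb\supb{2}$ generated by squares $b^2$; the identity $2b_1b_2=(b_1+b_2)^2-b_1^2-b_2^2$, together with the $i_{\Phi}\ma\mb$ term (with $i_{\Phi}\ge 2$ in every multiply-laced type), absorbs the discrepancy without any extra hypothesis on $R$.
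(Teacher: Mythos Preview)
The paper does not give a separate proof of this lemma; it merely says the result ``can be easily derived from the proof of Lemma~\ref{nlln}.'' So your overall plan---rerun that proof with two-parameter bookkeeping---is exactly what the paper intends.

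There is, however, a genuine confusion in your proposal. You invoke the $c$-trick and the residue-field hypothesis on $R$, but Lemma~\ref{nllnbis} carries \emph{no} such hypothesis. That is precisely the point of the weakened long-root form parameter $i_\Phi\ma\mb+\ma\supb{2}\md+\mb\supb{2}\mc$: it is chosen so that the $c$-trick is unnecessary. The three summands are produced directly by three different Chevalley commutators. For a long root $\alpha$, a short--short decomposition $\alpha=\beta+\gamma$ with $a\in\ma$, $b\in\mb$ gives $[x_\beta(a),x_\gamma(b)]=x_\alpha(\pm i_\Phi ab)$, contributing $i_\Phi\ma\mb$. A long--short decomposition $\alpha=\beta+2\gamma$ with $c\in\mc$ in the long slot and $b\in\mb$ in the short slot gives $x_\alpha(\pm cb^2)$ after the short-root tail is absorbed, and $cb^2\in\mb\supb{2}\mc$; swapping the roles of the two admissible pairs yields $\ma\supb{2}\md$. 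No comparison of $[x_\beta(ca),x_\gamma(b)]$ with $[x_\beta(a),x_\gamma(cb)]$ is needed.

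Relatedly, your sentence ``tails of shape $a^2b$ or $ab^2$ arising from short--long decompositions contribute to $\ma\supb{2}\md$ or $\mb\supb{2}\mc$'' cannot be right as written: with $a\in\ma$ and $b\in\mb$ one has $a^2b\in\ma\supb{2}\mb$, not $\ma\supb{2}\md$. The contributions to $\ma\supb{2}\md$ and $\mb\supb{2}\mc$ come from commutators where the long-root argument is drawn from $\mc$ or $\md$, not from squaring a short parameter and pairing it with the \emph{other} short parameter. Once you track which of $\ma,\mb,\mc,\md$ each argument lives in, the three summands fall out with no appeal to properties of $R$.
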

Without the additional assumption in the case $\C_l$, $l\ge 2$, the
upper and lower levels of the commutator of two relative elementary
subgroups do not coincide, and $E(\Phi,R,\ma\mb)$ in the statement
of Lemma~17 should be replaced\footnote{After the submission of the 
present paper, Himanee Apte and Alexei Stepanov \cite{AS} addressed
similar problems from a slightly different viewpoint. Their approach
depends on similar level calculations, and in particular, they 
indicate missing assumptions in previous publications, and 
provide detailed proofs for the case of $\Phi=\C_l$, without such
additional assumptions, see in particular, \cite{AS}, Lemma 5.2.} 
by $E(\Phi,R,\ma\supb{2}\mb+2\ma\mb+\ma\mb\supb{2})$.
Nevertheless, when $\ma$ and $\mb$ are comaximal, $\ma+\mb=R$,
these levels do coincide, so that no additional assumption is
necessary in the statement of Theorem 3.
\par
Next lemma bounds the upper level of mixed commutator subgroups.
Observe, that it also holds for $\rk(\Phi)=1$.
\begin{Lem}\label{GUGU}
Let\/ $\rk(\Phi)\ge 1$. Then for any two ideals\/ $\ma$ and\/ $\mb$
of the ring\/ $R$ one has the following inclusion
$$ \big[G(\Phi,R,\ma),C(\Phi,R,\mb)\big]\le G(\Phi,R,\ma\mb). $$
\end{Lem}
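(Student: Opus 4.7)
The plan is to reduce the inclusion to a direct matrix computation in a faithful rational representation of $G(\Phi,R)$. Fix such a representation and the resulting embedding $G(\Phi,R) \hookrightarrow \GL(n,R)$; by Lemma~\ref{lemma6} one has $G(\Phi,R,\ma\mb) = G(\Phi,R) \cap \GL(n,R,\ma\mb)$, so it suffices to show that for a generating commutator $[x,y]$ with $x \in G(\Phi,R,\ma)$ and $y \in C(\Phi,R,\mb)$, all matrix entries of $[x,y] - 1$ lie in $\ma\mb$; normality of $G(\Phi,R,\ma\mb)$ in $G(\Phi,R)$ then upgrades this to the full commutator subgroup.

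For $x \in G(\Phi,R,\ma)$ the congruence $x \equiv 1 \pmod{\ma}$ yields $x = 1 + \alpha$ with $\alpha$ having entries in $\ma$. The delicate point is to obtain an analogous normal form for $y$. Since $\bar y$ lies in the centre of $G(\Phi,R/\mb)$, one selects the representation so that this centre is realised by scalar matrices in $\GL(n,R/\mb)$. Such representations exist uniformly across types --- the standard vector representation serves for classical groups, while suitable fundamental or minuscule representations serve for exceptional groups. One may then lift to write $y = \zeta + b$ with $\zeta \in R$ (viewed as the scalar matrix $\zeta \cdot I$) and $b$ having entries in $\mb$.

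The computation is then a one-liner: the scalar parts cancel, giving
$$ xy - yx \;=\; (1+\alpha)(\zeta + b) - (\zeta + b)(1+\alpha) \;=\; \alpha b - b\alpha, $$
whose entries all lie in $\ma\mb$. Combined with the identity $[x,y] - 1 = (xy - yx)(yx)^{-1}$ (obtained from $[x,y]\cdot yx = xy$), this shows $[x,y] - 1$ has all entries in $\ma\mb$, hence $[x,y] \in G(\Phi,R,\ma\mb)$, as required.

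The principal obstacle lies in the representation-theoretic step: producing a single faithful rational representation in which the centre of $G(\Phi,R/\mb)$ is realised by scalar matrices, uniformly in the ideal $\mb$. This is standard but case-by-case and is what turns the above matrix identity into a statement valid for all types. A more uniform alternative works in the adjoint representation: Lemma~\ref{lemma7} identifies $C(\Phi,R,\mb)$ with $G(L,L\mb)$, and the analogous operator computation on the Lie algebra immediately yields $[x,y] \in C(\Phi,R,\ma\mb)$; a supplementary argument is then required to descend from the full congruence subgroup $C$ to the principal congruence subgroup $G$ at the level $\ma\mb$.
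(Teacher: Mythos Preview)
Your approach is essentially the same as the paper's: embed $G(\Phi,R)$ in $\GL(n,R)$ via a faithful rational representation, use Lemma~\ref{lemma6} to identify $G(\Phi,R,\ma\mb)$ with $G(\Phi,R)\cap\GL(n,R,\ma\mb)$, and then verify the inclusion at the level of matrices. The paper packages your explicit computation $xy-yx=\alpha b-b\alpha$ as a citation to an external result (\cite{VS10}, Lemma~5), which asserts $[\GL(n,R,\ma),C(n,R,\mb)]\le\GL(n,R,\ma\mb)$; your argument is exactly the proof of that cited lemma.

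The representation-theoretic point you flag --- that one needs $C(\Phi,R,\mb)$ to land inside $C(n,R,\mb)$, i.e.\ that central elements of $G(\Phi,R/\mb)$ act as scalar matrices --- is precisely the inclusion $C(\Phi,R,\mb)\le C(n,R,\mb)$, which the paper records just before Lemma~\ref{lemma7} as holding ``clearly'' for any rational representation, and then invokes without further comment in the proof. So your concern is legitimate but the paper treats it as already dealt with; you need not argue case-by-case, and the detour through the adjoint representation and a descent from $C$ to $G$ is unnecessary.
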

\begin{proof} Consider a faithful rational representation
$G(\Phi,R)\le\GL(n,R)$. Then
$$ G(\Phi,R,\ma)\le\GL(n,R,\ma),\qquad C(\Phi,R,\mb)\le C(n,R,\mb). $$
\noindent
Now, by lemma 5 of \cite{VS10} one has
$$ \big[G(\Phi,R,\ma),C(\Phi,R,\mb)\big]\le\big[\GL(n,R,\ma),C(n,R,\mb)\big]\le
\GL(n,R,\ma\mb). $$
\noindent
Since the left hand side is a subgroup of $G(\Phi,R)$, by
lemma~\ref{lemma6} we get
$$\big [G(\Phi,R,\ma),C(\Phi,R,\mb)\big]\le G(\Phi,R)\cap\GL(n,R,\ma\mb)
=G(\Phi,R,\ma\mb). $$
\end{proof}

%%%%%%%%%%%%%%%%%%%%%%%%%%%%%%%%%%%%%%%%%%%%%%%%%%%%%%%%%%%%%%%%%%%%%%%%%%%%%

\section{Relative conjugation calculus}

This section, and the next one constitute the technical core of the
paper. Here, we develop a relative version of the conjugation calculus
in Chevalley groups, whereas in the next section we evolve a relative
version of the commutator calculus. Throughout this section we assume
$\rk(\Phi)\ge 2$.
\par
In our survey \cite{RN} we explain the essence of Bak's method in
non-technical terms, and in our conference paper \cite{yoga}, joint
with Alexei Stepanov, we discuss the general philosophy of our versions
of that method, their interpretation in terms of $s$-adic topologies,
and their precise relation with other localisation methods. Not to
repeat ourselves, we simply refer the reader to these two sources,
and the references therein.
\par
For future applications we allow {\it two\/} localisation parameters.
Strictly speaking, this is not necessary for the proof of Theorem 1.
However, this is essential in the proof of Theorem 2 and is an absolute
must for the more advanced applications we ultimately have in mind,
such as {\it general\/} multiple commutator formulas, where
{\it none\/} of the factors is elementary. With this end we fix
{\it two\/} elements $s,t\in R$ and look at the localisation
$$ R_{st}=(R_s)_t=(R_t)_s. $$
\noindent
%% As in \cite{RN1,SV10} we adopt the following notational convention.
All calculations in this and the next sections take place in
$E(\Phi,R_{st})$. Thus, when we write something like
$E(\Phi,s^pt^qR)$, or $x_{\a}(s^pa)$, what we {\it really\/}
mean, is $E\big(\Phi,F_s(s^pt^qR)\big)$, or $x_{\a}(F_s(s^pa))$,
respectively, but we suppress $F_s$ in our notation.
This shouldn't lead to a confusion, since here we
{\it never\/} refer to elements or subgroups of $G(\Phi,R)$.
\par
The overall strategy in this and the next sections is {\it exactly\/}
the same, as in the proofs of Lemmas 3.1 and 4.1 of \cite{RN1} and
in the proofs of Lemmas 8--10 of \cite{SV10}. In turn, as we have
already mentioned in the introduction, both \cite{RN1} and \cite{SV10}
followed the general scheme proposed by Anthony Bak \cite{B4} for the
general linear group, and developed by the first author \cite{RH,RH2}
for unitary groups. Ideologically closely congnate, the actual
calculations in \cite{RN1,SV10} were technically noticeably different
from those in \cite{B4,RH,RH2} in some respects, due to the two
contrasting factors: some important technical simplifications, and
the fancier forms of the Chevalley commutator formula.
\par
However, now we wish to do the same at the {\it relative\/}, rather
than absolute level. In other words, we have to introduce another
parameter belonging to an ideal $\ma\trianglelefteq R$. The difference
with the existing versions of localisation is that whereas powers of
localising elements $s$ and $t$ are at our disposal, and can be
distributed among the factors, the ideal $\ma$ is fixed, and cannot
be distributed.
%%% are not happy just to keep track of the {\it powers\/} of $s$
%%% and $t$. What we need now, are explicit bounds on {\it the
%%% number of elementary factors\/} on the right hand side.
\par
The first main objective of the conjugation calculus is to establish
that conjugation by a fixed matrix $g\in G(\Phi,R_s)$ is continuous
in $s$-adic topology. In the proof one uses a base of neighborhoods
of $e$ and establishes that for any such neighborhood $V$ there
exists another neighborhood $U$ such that ${}^gU\subseteq V$. Usually,
one takes either elementary subgroups $E(\Phi,s^k\ma)$ of level
$s^k\ma$, or {\it relative\/} elementary subgroups $E(\Phi,R,s^k\ma)$
of level $s^k\ma$, as a base.
\par
However, both choices are not fully satisfactory in that they lead to
extremelly onerous calculations. The reason is that the first of these
choices is too small as the neighbourhood on the right hand side,
while the second of these choices is too large as the neighbourhood
on the left hand side. The solution proposed for $\GL(n,R)$ in \cite{RZ}
and later applied to unitary groups in \cite{RNZ} consists in selecting
another base of neightborhoods
$$ E(\Phi,s^k\ma)\le E(\Phi,s^kR,s^k\ma)\le E(\Phi,R,s^k\ma), $$
\noindent
which is much better balanced with respect to conjugation. The following
definition embodies the gist of this method.
\begin{deff}
Let\/ $R$ be a commutative ring,\/ $\ma$ an ideal of\/ $R$
and\/ $s\in R$. For a positive integer\/ $k$, define
$$ E(\Phi,s^kR,s^k\ma)={E(\Phi,s^k\ma)}^{E(\Phi,s^kR)} $$
\noindent
as the normal closure of $E(\Phi,s^k\ma)$ in $E(\Phi,s^kR)$, i.e., the
group generated by ${}^e x_\alpha(s^ka)$ where $e\in E^K(\Phi,s^kR)$,
for some positive integer $K$, $a\in\ma$ and $\alpha\in\Phi$.
\end{deff}
The following lemma is a relative version of Lemma 3.1 of \cite{RN1}
and of Lemma 8 of \cite{SV10}. Observe, that we could not simply
put $E(\Phi,s^pt^q\ma)$ on the right hand side. While the powers of
$s$ and $t$ can be distributed among the factors on the right hand side
in the calculations below, this is not the case for the parameter
$a\in\ma$. This is why we need conjugates by elements of
$E(\Phi,s^pt^qR)$.
\par
Observe, that the proof works in terms of roots alone, and thus one
gets {\it uniform\/} estimates for the powers of $s$ and $t$, which
do not depend on the ideal $\ma$. This circumstance, the
{\it equi-continuity\/} of conjugation by $g\in G(\Phi,R_s)$ on
congruence subgroups, is extremely important, and will be repeatedly
used in the sequel.
%%%%%%%%%%%%%%%%%%%%%%%%%%%%%%%%%%%%%%
\begin{Lem}\label{lem2}
If\/ $p$, $q$ and\/ $k$ are given, there exist\/ $h$ and\/ $m$ such that
$$ {}^{\textstyle{E^1\left(\Phi,\frac{1}{s^k}R\right)}}
E(\Phi,s^ht^m\ma)\subseteq E(\Phi,s^pt^qR,s^pt^q\ma). $$
\noindent
Such\/ $h$ and\/ $m$ depend on\/ $\Phi$, $k$, $p$ and $q$ alone,
but does not depend on the ideal\/ $\ma$.
\end{Lem}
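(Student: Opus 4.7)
The strategy is to verify the inclusion on generators of $E(\Phi, s^h t^m \ma)$, namely on elements of the form $x_\beta(s^h t^m b)$ with $\beta \in \Phi$ and $b \in \ma$; since conjugation is a group automorphism, this reduction suffices. For such a generator I would write
$$ {}^{x_\alpha(a/s^k)} x_\beta(s^h t^m b) = \bigl[x_\alpha(a/s^k),\, x_\beta(s^h t^m b)\bigr] \cdot x_\beta(s^h t^m b), $$
so the task splits into controlling the commutator and the tail factor $x_\beta(s^h t^m b)$, the latter lying in $E(\Phi, s^p t^q \ma)$ as soon as $h \geq p$ and $m \geq q$.

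When $\beta \neq -\alpha$, the Chevalley commutator formula (R2) expands the commutator as a product of root elements $x_{i\alpha + j\beta}\bigl(N_{\alpha\beta ij}\, a^i b^j\, s^{hj - ki} t^{mj}\bigr)$ with $1 \leq i, j \leq i_\Phi$. Since $b^j \in \ma$, choosing $h \geq p + k\, i_\Phi$ and $m \geq q$ forces every such factor into $E(\Phi, s^p t^q \ma) \subseteq E(\Phi, s^p t^q R, s^p t^q \ma)$. The key observation is that the resulting bounds depend only on $\Phi$, $k$, $p$, $q$, and not on $\ma$ or on the specific generator; this is what gives the promised equi-continuity.

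The main obstacle is the case $\beta = -\alpha$, where (R2) does not apply directly. Here I would invoke Lemma \ref{lem1} to decompose $-\alpha$: generically $-\alpha = \gamma + \delta$ with $N_{\gamma\delta 11} = 1$, and one rewrites
$$ x_{-\alpha}(s^h t^m b) = \bigl[x_\gamma(s^{h_1} t^{m_1} b),\, x_\delta(s^{h_2} t^{m_2})\bigr] \cdot T^{-1}, $$
with $h_1 + h_2 = h$, $m_1 + m_2 = m$, and $T$ a tail of root elements $x_\rho(\cdot)$ with $\rho = i\gamma + j\delta$ and $(i,j) \neq (1,1)$. A direct inspection shows that $\gamma, \delta \neq \pm\alpha$ (otherwise the complementary summand would fail to be a root) and that no tail root $\rho$ can coincide with $-\alpha$, so conjugation of each of the three factors by $x_\alpha(a/s^k)$ falls under the previous case. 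The two factors carrying $b \in \ma$ land in $E(\Phi, s^p t^q R, s^p t^q \ma)$, while the lone $x_\delta(s^{h_2} t^{m_2})$ lands merely in $E(\Phi, s^p t^q R)$; since the former is normal in the latter, the resulting commutator is absorbed into $E(\Phi, s^p t^q R, s^p t^q \ma)$. The exceptional subcase $\Phi = \C_l$ with $\alpha$ long uses the alternative decomposition $-\alpha = \gamma + 2\delta$ with $N_{\gamma\delta 12} = 1$, splitting $s^h t^m b = (s^{h_1} t^{m_1} b)(s^{h_2} t^{m_2})^2$ with $h_1 + 2 h_2 = h$, $m_1 + 2 m_2 = m$; the argument is otherwise identical.

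The delicate point throughout is verifying that every root appearing in the tail $T$ is genuinely distinct from $-\alpha$, so that the case $\beta \neq -\alpha$ is legitimately applicable to all resulting factors; then taking $h$ and $m$ large enough to simultaneously accommodate both decompositions (and the various exponents $hj - ki$, $mj$ appearing in each application of (R2)) yields the desired uniform bounds depending only on $\Phi$, $k$, $p$, $q$.
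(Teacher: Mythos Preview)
Your proposal is correct and follows essentially the same approach as the paper's proof: reduce to generators, handle $\beta\neq-\alpha$ directly via the Chevalley commutator formula, and for $\beta=-\alpha$ decompose $\beta$ using Lemma~\ref{lem1} so that every resulting factor falls under the previous case, with the normality of $E(\Phi,s^pt^qR,s^pt^q\ma)$ in $E(\Phi,s^pt^qR)$ absorbing the one factor not carrying an element of $\ma$. The only cosmetic differences are that the paper splits the exponent symmetrically as $h/2,h/2$ (resp.\ $h/3,h/3,h/3$) rather than your $h_1+h_2=h$, and places the ideal parameter on the other factor of the commutator; your bound $h\geq p+ki_\Phi$ is in fact slightly sharper than the paper's $h\geq i_\Phi k+p+1$.
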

\begin{proof}
Since by definition $E(\Phi,s^ht^m\ma)$ is generated by $x_{\beta}(s^ht^ma)$,
where $\beta\in\Phi$ and $a\in\ma$, it suffices to show that there exist
$h$ and $m$ such that
$$ {}^{ x_\al\left(\frac{r}{s^k}\right)}
x_{\beta}(s^ht^ma)\in E(\Phi,s^pt^qR,s^pt^q\ma), $$
\noindent
for any $x_\al(r/s^k)\in E^1\big(\Phi,\frac{1}{s^k}R\big)$ and any
$x_{\beta}(s^ht^ma)\in E(\Phi,s^ht^m\ma)$.

\smallskip
\noindent
{\bf Case 1.}  Let $\al\neq-\beta$ and set $h\geq i_{\Phi}k+p+1$, $m\ge q$.
By the Chevalley commutator formula,
$$ x_{\al}\Big(\frac{r}{s^k}\Big)x_{\beta}(s^ht^ma)x_{\al}\Big(-\frac{r}{s^k}\Big)=
\prod_{i\al+j\beta\in\Phi}x_{i\al+j\beta}
\bigg(N_{\al\beta ij}{\Big(\frac{r}{s^k}\Big)}^i{(s^ht^ma)}^j\bigg)x_{\beta}(s^ht^ma) $$
\noindent
and a quick inspection shows that the right hand side of the above
equality is in $E^L(\Phi,s^pt^q\ma)$, where $L=2,3$ or $5$,
depending on whether $\Phi$ is simply laced, doubly laced or
triply laced. Clearly,
$$ E^L(\Phi,s^pt^q\ma)\subseteq E(\Phi,s^pt^q\ma)\le E(\Phi,s^pt^qR,s^pt^q\ma). $$
\smallskip
\noindent
{\bf Case 2.} Let $\al=-\beta$ and one of the following holds: $\beta$
is short or $\Phi\neq\C_l$. By Lemma~\ref{lem1} there exist roots $\gamma$
and $\delta$ such that $\gamma+\delta=\beta$ and $N_{\gamma\delta
11}=1$.  We set $h=2(i_{\Phi}k+p+1)$, $m=2q$, and decompose
$x_{\beta}(s^ht^ma)$ as follows:
$$ x_{\beta}(s^ht^ma)= [x_{\gamma}(s^{h/2}t^{m/2}),x_{\delta}(s^{h/2}t^{m/2}b)]
\prod x_{i\gamma+j\delta}(-N_{\gamma\delta ij}(s^{h/2}t^{m/2})^i(s^{h/2}t^{m/2}a)^j), $$
\noindent
where the product on the right hand side is taken over all roots
$i\gamma+j\delta\neq\beta$.
\par
Conjugating this expression by $x_\al\big(\frac{r}{s^k}\big)$ we get
\begin{multline*}
{}^{x_{\al}\left(\frac{r}{s^k}\right)}x_{\beta}(s^ht^ma)=
\Big[{}^{x_{\al}\left(\frac{r}{s^k}\right)}x_{\gamma}(s^{h/2}t^{m/2}),
{}^{x_{\al}\left(\frac{r}{s^k}\right)}x_{\delta}(s^{h/2}t^{m/2}a)\Big]\cdot\\
\prod{}^{x_{\al}\left(\frac{r}{s^k}\right)}
x_{i\gamma+j\delta}
\Big(-N_{\gamma\delta ij}(s^{h/2}t^{m/2})^i(s^{h/2}t^{m/2}a)^j\Big).
\end{multline*}
\noindent
Obviously, $\gamma,\delta$ and all the roots $i\gamma+j\delta\neq\beta$,
occuring in the product, are distinct from $-\al$. Now, by Case~1 the
first element of the commutator belongs to $E(\Phi,s^pt^qR)$, while the
second element of the commutator, and all factors of the product belong
to $E(\Phi,s^pt^q\ma)$. Since $E(\Phi,s^pt^qR,s^pt^q\ma)$ is normalised
by $E(\Phi,s^pt^qR)$, it follows that each term on right hand side sits
in $E(\Phi,s^pt^qR,s^pt^q\ma)$.

\smallskip
\noindent
{\bf Case 3.} Let $\Phi=\C_l$ and $\al=-\beta$ be a long root. By
Lemma~\ref{lem1} there exist roots $\gamma$ and $\delta$ such that either
$\gamma+2\delta=\beta$ and $N_{\gamma\delta 12}=1$, or
$2\gamma+\delta=\beta$ and $N_{\gamma\delta21}=1$.  We look at the
first case, the second case is similar. Alternatively, if
$N_{\gamma\delta 12}=-1$, one could change the sign of $x_{\gamma}(a)$
in the following formula by $x_{\gamma}(-a)$.  We set $h=3(i_{\Phi}k+p+1)$
and $m=3q$, and decompose $x_{\beta}(s^ht^ma)$ as follows:
$$ x_{\beta}(s^ht^ma)=
\big[x_{\gamma}(s^{h/3}t^{m/3}a),x_{\delta}(s^{h/3}t^{m/3})\big]\cdot
x_{\gamma+\delta}(-N_{\gamma\delta 11}s^{2h/3}t^{2m/3}a), $$
\noindent
Conjugating this expression by $x_\al\big(\frac{r}{s^k}\big)$ we get
\begin{multline*}
{}^{x_{\al}\left(\frac{r}{s^k}\right)}x_{\beta}(s^ht^ma)=
\Big[{}^{x_{\al}\left(\frac{r}{s^k}\right)}x_{\gamma}(s^{h/3}t^{m/3}a),
{}^{x_{\al}\left(\frac{r}{s^k}\right)}x_{\delta}(s^{h/3}t^{m/3})\Big]\cdot\\
{}^{x_{\al}\left(\frac{r}{s^k}\right)}
x_{\gamma+\delta}(-N_{\gamma\delta 11}s^{2h/3}t^{2m/3}a).
\end{multline*}
\noindent
As in Case 2, we can apply Case~1 to each conjugate on the right hand
side. The first element of the commutator, and the last factor belong
to $E(\Phi,s^pt^q\ma)$, while the second element of the commutator
belongs to $E(\Phi,s^pt^qR)$. Again, it remains only to recall that
$E(\Phi,s^pt^qR,s^pt^q\ma)$ is normalised by $E(\Phi,s^pt^qR)$.
\end{proof}

Now, the trick is that the elementary group $E(\Phi,s^ht^m\ma)$
on the left hand side can be {\it effortlessly\/} replaced by
$E(\Phi,s^ht^mR,s^ht^m\ma)$. Notice, that this step does not work
like that for the {\it usual\/} relative group $E(\Phi,R,s^ht^m\ma)$.
The reason are the obstinate denominators in the exponent, which
force to reiterate the procedure several times, according to the
length of the conjugating element.
\begin{Lem}\label{lem3}
If\/ $p$, $q$ and\/ $k$ are given, there exist $h$
and\/ $m$ such that
$$ {}^{E^1\left(\Phi,\frac{1}{s^k}R\right)}
E(\Phi,s^ht^mR,s^ht^m\ma)\subseteq E(\Phi,s^pt^qR,s^pt^q\ma). $$
\end{Lem}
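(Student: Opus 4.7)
The plan is to bootstrap Lemma~\ref{lem2} by applying it twice, so as to upgrade its conclusion from the elementary subgroup $E(\Phi,s^ht^m\ma)$ to its normal closure $E(\Phi,s^ht^mR,s^ht^m\ma)$ in $E(\Phi,s^ht^mR)$. The key observation is that a typical generator ${}^ex_{\beta}(s^ht^ma)$ of the normal closure, when conjugated by some $g\in E^1\bigl(\Phi,\tfrac{1}{s^k}R\bigr)$, rewrites as ${}^{(geg^{-1})}\bigl({}^gx_{\beta}(s^ht^ma)\bigr)$. The inner factor ${}^gx_{\beta}(s^ht^ma)$ and the outer conjugator $geg^{-1}$ can then be controlled separately by two different calls of Lemma~\ref{lem2}.

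Concretely, given $p,q,k$, I first apply Lemma~\ref{lem2} to the data $(p,q,k)$ with the ideal $\ma$, obtaining $h_1,m_1$ (and, from the proof of Lemma~\ref{lem2}, necessarily $h_1\ge p$, $m_1\ge q$) such that
$$ {}^{\textstyle{E^1\left(\Phi,\frac{1}{s^k}R\right)}}E(\Phi,s^{h_1}t^{m_1}\ma)\subseteq E(\Phi,s^pt^qR,s^pt^q\ma). $$
Then I apply Lemma~\ref{lem2} a second time to the data $(h_1,m_1,k)$, now with the ideal $R$ itself, obtaining $h\ge h_1$, $m\ge m_1$ with
$$ {}^{\textstyle{E^1\left(\Phi,\frac{1}{s^k}R\right)}}E(\Phi,s^ht^mR)\subseteq E(\Phi,s^{h_1}t^{m_1}R), $$
using that $E(\Phi,I,I)=E(\Phi,I)$ for any ideal $I$. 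With these $h,m$, take any generator ${}^ex_{\beta}(s^ht^ma)$ of $E(\Phi,s^ht^mR,s^ht^m\ma)$ and any $g\in E^1\bigl(\Phi,\tfrac{1}{s^k}R\bigr)$. Writing $e$ as a product of elementary factors of $E(\Phi,s^ht^mR)$ and applying the second inclusion factor by factor yields $geg^{-1}\in E(\Phi,s^{h_1}t^{m_1}R)\subseteq E(\Phi,s^pt^qR)$. Since $h\ge h_1$ and $m\ge m_1$, the root unipotent $x_{\beta}(s^ht^ma)$ already lies in $E(\Phi,s^{h_1}t^{m_1}\ma)$, so the first inclusion delivers ${}^gx_{\beta}(s^ht^ma)\in E(\Phi,s^pt^qR,s^pt^q\ma)$. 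The latter group is normalised by $E(\Phi,s^pt^qR)$ by definition, so the whole expression ${}^g\bigl({}^ex_{\beta}(s^ht^ma)\bigr)$ sits in $E(\Phi,s^pt^qR,s^pt^q\ma)$, as required.

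The only real obstacle is that the word length of the outer conjugator $e$ is a priori unbounded, which would doom any naive attempt to rewrite ${}^ge$ as a controlled product of root unipotents. The iteration device sidesteps this neatly: the second call of Lemma~\ref{lem2} simply throws ${}^ge$ into a group that visibly normalises the target, and the uniformity of $h$ and $m$ in $\ma$ built into Lemma~\ref{lem2} carries through automatically to the present lemma.
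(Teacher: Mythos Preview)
Your proposal is correct and follows essentially the same approach as the paper: rewrite ${}^g({}^ex)$ as ${}^{(geg^{-1})}({}^gx)$, control the outer conjugator and the inner element separately via two calls to Lemma~\ref{lem2}, and then invoke normality of $E(\Phi,s^pt^qR,s^pt^q\ma)$ in $E(\Phi,s^pt^qR)$. The only cosmetic difference is that the paper exploits the uniformity of the bounds in Lemma~\ref{lem2} with respect to the ideal to use a \emph{single} pair $(h,m)$ for both applications (target $(p,q)$, once with ideal $R$ and once with ideal $\ma$), whereas you iterate, first hitting $(h_1,m_1)$ and then $(h,m)$; your version yields slightly larger exponents but is equally valid.
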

\begin{proof}
Indeed, one has ${\vphantom{\big(}}^h\big({}^gx\big)={}^{(hgh^{-1})h}x$.
Thus,
\begin{multline*}
{}^{E^1\left(\Phi,\frac{R}{s^k}\right)}E(\Phi,s^ht^mR,s^ht^m\ma)=
{\vphantom{\Big(}}^{E^1\left(\Phi,\frac{R}{s^k}\right)}
\Big({}^{E(\Phi,s^ht^mR)}E(\Phi,s^ht^mR)\Big)=\\
={\vphantom{\Big(}}^{{}^{E^1\left(\Phi,\frac{R}{s^k}\right)}E(\Phi,s^ht^mR)}
\Big({}^{E^1\left(\Phi,\frac{R}{s^k}\right)}E(\Phi,s^ht^mR)\Big).
\end{multline*}
Now, by the preceding lemma, for any given $p$ and $q$ there exist
sufficiently large $h$ and $m$ such that the exponent is contained
in $E(\Phi,s^pt^qR)$, while the base is contained in
$E(\Phi,s^pt^qR,s^pt^q\ma)$. It remains to recall that by the
very definition $E(\Phi,s^pt^qR,s^pt^q\ma)$ is normalised by
$E(\Phi,s^pt^qR)$.
\end{proof}
Now, since $h$ and $m$ in Lemma~\ref{lem2} do not depend on the ideal
$\ma$, the preceding lemma immediately implies the following fact.
\begin{Lem}\label{lat}
If\/ $p,k$ are given, then there is an\/ $q$ such that
$$ {}^{E^1\left(\Phi,\frac{R}{s^k}\right)} \big
[E(\Phi,s^qR,s^q\ma), E(\Phi,s^qR,s^q\mb)\big] \subseteq
\big [E(\Phi,s^pR,s^p\ma), E(\Phi,s^pR,s^p\mb)\big]. $$
\end{Lem}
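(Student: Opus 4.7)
The plan is to derive this immediately from Lemma~\ref{lem3} by exploiting the key uniformity feature repeatedly emphasised: the thresholds produced in Lemmas~\ref{lem2} and \ref{lem3} depend on $\Phi$, $k$, $p$ alone, and \emph{not} on the ideal whose conjugates are being controlled. Consequently, one and the same exponent $q$ will serve simultaneously for the two ideals $\ma$ and $\mb$ that appear on opposite sides of the commutator.

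Specifically, specialising Lemma~\ref{lem3} to the present one-parameter situation (take the auxiliary parameter $t$ to equal $1$), I obtain, for the given $p$ and $k$, an integer $q$ such that for every ideal $\mathfrak{c}\unlhd R$,
\begin{equation*}
{}^{E^1(\Phi,R/s^k)} E(\Phi,s^q R,s^q\mathfrak{c}) \subseteq E(\Phi,s^p R,s^p\mathfrak{c}).
\end{equation*}
I would apply this inclusion once with $\mathfrak{c}=\ma$ and once with $\mathfrak{c}=\mb$, using the \emph{same} $q$ in both cases.

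Now fix an arbitrary $g\in E^1(\Phi,R/s^k)$ and an arbitrary element of the commutator subgroup on the left-hand side. By definition it is a finite product $z=\prod_i [x_i,y_i]^{\varepsilon_i}$ with $x_i\in E(\Phi,s^q R,s^q\ma)$, $y_i\in E(\Phi,s^q R,s^q\mb)$ and $\varepsilon_i=\pm 1$. Since conjugation is a group automorphism, it distributes over commutators and products, yielding
\begin{equation*}
{}^g z=\prod_i \bigl[{}^g x_i,\,{}^g y_i\bigr]^{\varepsilon_i}.
\end{equation*}
By the previous paragraph each ${}^g x_i$ lies in $E(\Phi,s^p R,s^p\ma)$ and each ${}^g y_i$ in $E(\Phi,s^p R,s^p\mb)$, so every commutator in the product, and therefore ${}^g z$ itself, lies in $\bigl[E(\Phi,s^p R,s^p\ma), E(\Phi,s^p R,s^p\mb)\bigr]$, as required.

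No serious obstacle is anticipated: the substantive work has been carried out in Lemmas~\ref{lem2} and \ref{lem3}, where the ideal-independence of the thresholds was established. The present statement is essentially a formal packaging of that uniformity as simultaneous control over both arguments of a mixed commutator subgroup.
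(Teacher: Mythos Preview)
Your proof is correct and follows exactly the approach the paper takes: it simply observes that the ideal-independence of the exponent in Lemma~\ref{lem3} lets one apply that lemma simultaneously to $\ma$ and $\mb$, after which conjugation distributes over the commutator. The paper is even terser---it just remarks that Lemma~\ref{lem3} ``immediately implies'' this fact---so you have merely spelled out the obvious details.
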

Iterated application of the above lemma, gives the following result.
\begin{Lem}\label{lat-bis}
If\/ $p,k$ and\/ $L$ are given, then there is an\/ $q$ such
that $$ {}^{E^L\left(\Phi,\frac{R}{s^k}\right)} \big
[E(\Phi,s^qR,s^q\ma), E(\Phi,s^qR,s^q\mb)\big] \subseteq
\big [E(\Phi,s^pR,s^p\ma), E(\Phi,s^pR,s^p\mb)\big]. $$
\end{Lem}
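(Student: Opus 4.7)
The plan is a clean induction on $L$, with Lemma \ref{lat} itself serving as the base case $L=1$. The key observation that makes the induction go through painlessly is that the exponent $q$ produced by Lemma \ref{lat} depends only on $p$ and $k$, not on the ideals $\ma$ and $\mb$; in particular, one is free to apply that lemma with any intermediate exponent in place of $p$, without spoiling the uniformity in the ideals.

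For the inductive step, assuming the result for $L-1$, I would take given $p$ and $k$ and first invoke Lemma \ref{lat} with these parameters to manufacture an intermediate exponent $p'$ such that
$$ {}^{E^1\left(\Phi,\frac{R}{s^k}\right)} \big[E(\Phi,s^{p'}R,s^{p'}\ma), E(\Phi,s^{p'}R,s^{p'}\mb)\big] \subseteq \big[E(\Phi,s^{p}R,s^{p}\ma), E(\Phi,s^{p}R,s^{p}\mb)\big]. $$
Then I would apply the inductive hypothesis to the pair $(p',k)$ to produce an exponent $q$ such that
$$ {}^{E^{L-1}\left(\Phi,\frac{R}{s^k}\right)} \big[E(\Phi,s^{q}R,s^{q}\ma), E(\Phi,s^{q}R,s^{q}\mb)\big] \subseteq \big[E(\Phi,s^{p'}R,s^{p'}\ma), E(\Phi,s^{p'}R,s^{p'}\mb)\big]. $$
This $q$ will be the one required in the conclusion.

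Finally, writing any $g\in E^L(\Phi,R/s^k)$ as $g=g_1g'$ with $g_1\in E^1(\Phi,R/s^k)$ and $g'\in E^{L-1}(\Phi,R/s^k)$, and using the identity ${}^g x = {}^{g_1}({}^{g'}x)$, the two containments compose to give the desired inclusion. There is essentially no obstacle here; if anything, the only mild care needed is to treat the trivial case where the factor $g'$ is allowed to lie in $E^{L-1}$ rather than exactly $E^{L-1}$ (which is fine since $E^L$ is defined as products of $L$ \emph{or fewer} elementary root unipotents, so $E^{L-1}\subseteq E^L$ and the factorisation $g=g_1 g'$ is always available).
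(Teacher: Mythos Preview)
Your proposal is correct and is exactly the approach the paper takes: the paper's entire proof reads ``Iterated application of the above lemma gives the following result,'' and your induction on $L$ is precisely the unwinding of that iteration.
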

Now, we are all set for the next round of calculations. Namely,
it is our intention to obtain similar formulae, admitting
denominators not only in the exponent, but also on the ground
level.

%%%%%%%%%%%%%%%%%%%%%%%%%%%%%%%%%%%%%%%%%%%%%%%%%%%%%%%%%%%%%%%%%%%%%%%%%%%%%

\section{Relative commutator calculus}

To implement second localisation, we will have to be able to fight
powers of {\it two\/} elements in the denominator. The relative
commutator calculus turns out to be much more technically demanding,
than the relative conjugation calculus. Not only that the first step
of induction is {\it by far\/} the hardest one. Actually, even the
usually trivial first {\it substep\/} of the first step, the case
of two non-opposite roots, turns out to be a real challenge. As
always, it is extremely important for the sequel that the resulting
power estimates do not depend on the ideals $\ma$ and $\mb$.
\par
Throughout we continue to assume $\rk(\Phi)\ge 2$. In the cases
$\Phi=\B_2=\C_2$ and $\Phi=\G_2$ we additionally assume that $2\in R^*$.
These standing assumptions will not be repeated in the statements
of lemmas.
%% , relative commutator calculus in rank 2 will be
%% developed in \cite{RNZ2}.
\begin{Lem}\label{dfle}
If\/ $p,q,k,m$ are given, then there exist\/ $l$ and\/ $n$ such that
$$ \Big [E^1\Big(\Phi,\frac{t^l}{s^k}\ma\Big),
E^1\Big(\Phi,\frac{s^n}{t^m}\mb\Big)\Big]
\subseteq \big[E(\Phi,s^pt^qR,s^pt^q\ma),E(\Phi,s^pt^qR,s^pt^q\mb)\big]. $$
\noindent
These\/ $l$ and\/ $n$ depend on\/ $\Phi,p,q,k,m$ alone, and do
not depend on the choice of ideals\/ $\ma$ and\/ $\mb$.
\end{Lem}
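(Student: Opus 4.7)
The plan is to show, for a single pair of generators $x_\alpha(t^l a/s^k)$ and $x_\beta(s^n b/t^m)$ with $a \in \ma$ and $b \in \mb$, that their commutator lies in the right-hand side provided $l$ and $n$ are chosen large enough depending only on $\Phi, p, q, k, m$. The argument splits according to whether $\alpha + \beta \neq 0$ or $\alpha = -\beta$.

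The first and, as the authors flag, harder substep is $\alpha \neq -\beta$. Here the Chevalley commutator formula (R2) gives
$$\big[x_\alpha(t^l a/s^k),\, x_\beta(s^n b/t^m)\big] = \prod_{i\alpha + j\beta \in \Phi} x_{i\alpha+j\beta}\bigl(N_{\alpha\beta ij}\, t^{li-mj}\, s^{nj-ki}\, a^i b^j\bigr),$$
and since $1 \leq i, j \leq i_\Phi \leq 3$, choosing $l \geq 2q + i_\Phi m$ and $n \geq 2p + i_\Phi k$ forces every exponent to satisfy $li - mj \geq 2q$ and $nj - ki \geq 2p$, so each factor takes the shape $x_\gamma(s^{2p} t^{2q} r a' b')$ with $r \in R$, $a' = a^i \in \ma$, $b' = b^j \in \mb$. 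To realise such a factor inside $\bigl[E(\Phi, s^p t^q R, s^p t^q \ma),\, E(\Phi, s^p t^q R, s^p t^q \mb)\bigr]$, I invoke Lemma~\ref{lem1} to decompose $\gamma = \gamma_1 + \gamma_2$ with $N_{\gamma_1\gamma_2 11} = 1$ (generic case), or $\gamma = \gamma_1 + 2\gamma_2$ with $N_{\gamma_1\gamma_2 12} = 1$ (long root in $\C_l$); then I split the factor $s^{2p} t^{2q}$ as $s^p t^q \cdot s^p t^q$ evenly between the two positions and express $x_\gamma(s^{2p} t^{2q} r a' b')$ as a commutator $[x_{\gamma_1}(s^p t^q r a'),\, x_{\gamma_2}(s^p t^q b')]$ modulo a Chevalley tail. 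The two bracket entries sit in $E(\Phi, s^p t^q \ma)$ and $E(\Phi, s^p t^q \mb)$ respectively, hence in the target relative subgroups; the tail factors carry strictly higher powers of $s^p t^q$ times elements of $\ma\mb$, and are absorbed by a bounded iteration of the same decomposition. The $\C_l$ long-root case is the main technical burden, because the $\gamma_1 + 2\gamma_2$ shape makes the redistribution of $s^p t^q$ between the two positions more delicate; the standing assumption $2 \in R^*$ in the rank $2$ cases simplifies this step by permitting the alternative decomposition of $\gamma$ into two short roots with structure constant $2$.

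The second substep, $\alpha = -\beta$, is reduced to the first. Applying Lemma~\ref{lem1} to $x_\alpha(t^l a/s^k)$ itself, I rewrite it as a commutator plus tail $[x_{\gamma_1}(\cdot),\, x_{\gamma_2}(\cdot)] \cdot T$, with the powers of $t^l/s^k$ distributed between the two factors; since neither $\gamma_i$ equals $\alpha$, every resulting commutator against $x_{-\alpha}(s^n b/t^m)$ is of non-opposite type and is controlled by the previous substep. Assembling via the commutator identities (C1)--(C3) and absorbing the intermediate conjugations through the conjugation calculus of Lemma~\ref{lat}, at the price of enlarging $l$ and $n$ by a bounded amount, places the full commutator in the required subgroup. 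Because every power estimate is formulated purely in terms of root-system data, the final $l$ and $n$ depend only on $\Phi, p, q, k, m$ and are independent of the ideals $\ma, \mb$, as claimed.
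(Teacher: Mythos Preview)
Your overall plan matches the paper's: split into $\alpha\neq-\beta$ and $\alpha=-\beta$, use the Chevalley formula, and reduce the opposite-root case via Lemma~\ref{lem1}. But the two cases diverge from the paper in important ways.

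\textbf{Case $\alpha\neq-\beta$.} Your route is genuinely different. You expand via Chevalley and then, for each resulting factor $x_\gamma(s^{\cdot}t^{\cdot}a^ib^j)$, re-decompose $\gamma$ through Lemma~\ref{lem1} to manufacture a commutator with the $\ma$- and $\mb$-parameters on separate sides, absorbing tails by iteration. The paper does not do this: it first observes that in the single-factor situation ($|\alpha|=|\beta|$, excepting the $\G_2$ short-root anomaly) the commutator can be \emph{rewritten in place} as $[x_\alpha(s^pt^qa),x_\beta(s^{n-k-p}t^{l-m-q}b)]$ with no denominators; then for $\B_l,\C_l,\F_4,\G_2$ it constructs an auxiliary commutator $z$ sharing one factor with the original $y$, so that $yz^{-1}$ is a single root unipotent of the already-settled kind. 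Your approach can be made to work, but ``bounded iteration'' is not automatic: for a long root $\gamma$ in $\C_l$ the Lemma~\ref{lem1} shape $\gamma=\gamma_1+2\gamma_2$ forces the parameter to split as $(\cdot)(\cdot)^2$, which fails exactly when the original Chevalley factor has exponent pair $(i,j)=(1,1)$. The paper sidesteps this by the direct rewriting; your sketch does not.

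\textbf{Case $\alpha=-\beta$.} Here there is a genuine gap. In the decomposition $x_\alpha(t^la/s^k)=[x_{\gamma_1}(\cdot),x_{\gamma_2}(\cdot)]\cdot T$ only \emph{one} of the two arguments carries the parameter $a\in\ma$; the other is a bare ring element (compare the paper's display~\eqref{decip}, where $x_\gamma$ has no $a$ and $x_\delta$ has $a$). After Hall--Witt the term with inner commutator $[x_\gamma(\text{no }a),\,x_{-\alpha}(\cdot b)]$ is indeed non-opposite, but it is \emph{not} an instance of your Case~1, which needs an $\ma$-parameter on the first argument to land in $[E(\Phi,s^pt^qR,s^pt^q\ma),E(\Phi,s^pt^qR,s^pt^q\mb)]$. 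The paper treats this piece (its factor $v$) by expanding the inner commutator via Chevalley into a product lying in $E(\Phi,s^{p'}t^{q'}\mb)$ and then handling the outer commutator with $x_\delta(\cdot a)$ through the conjugation calculus of Lemma~\ref{lem2}, not through Case~1. Your sentence ``every resulting commutator against $x_{-\alpha}$ is of non-opposite type and is controlled by the previous substep'' skips precisely this step, and as written the argument does not close.
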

\begin{proof} Let $\a,\b\in\Phi$, $a\in\ma$ and $b\in\mb$.
We have to prove that
$$ \bigg[x_\alpha\Big(\frac{t^l}{s^k}a\Big),
x_\beta\Big(\frac{s^n}{t^m}b\Big)\bigg]\in
\big[E(\Phi,s^pt^qR,s^pt^q\ma),E(\Phi,s^pt^qR,s^pt^q\mb)\big]. $$
\noindent
The partition into cases is exactly the same as in the proof of
Lemma~\ref{lem2}, but the calculations themselves --- and the
resulting length bounds, should we attempt to record them ---
are now much fancier.
\par
\smallskip \noindent
{\bf Case 1.}  Let $\al\neq-\beta$. Then using %%%~(\ref{chevcom}),
the Chevalley commutator formula we get
\begin{multline*}
\biggl[x_\a\Bigl(\frac{t^l}{s^k}a\Bigr),
x_{\beta}\Bigl(\frac{s^{n}}{t^{m}}b\Bigr)\biggr]=
\prod_{i,j>0}x_{i\a+j\beta}\biggl(N_{\alpha\beta ij}
\Bigl(\frac{t^l}{s^k}a\Bigr)^i
\Bigl(\frac{s^{n}}{t^{m}}b\Bigr)^{\hskip-2pt j\ }\biggr)=\\
=\prod_{i\alpha+j\beta\in\Phi}
x_{i\alpha+j\beta}\Big(N_{\alpha\beta ij}
s^{jn-ik}t^{il-jm}a^ib^j\Big).
\end{multline*}
\noindent
Clearly, one can take sufficiently large $l$ and $n$.
It suffices to show, that taking large enough $l$ and $n$ we can
redistribute powers of $s$ and $t$ between the first and the second
parameters in each factor on the right hand side in such a way,
that the resulting product can be expressed as a {\it product\/}
of commutators without denominators.
\par
\smallskip\noindent
{\bf Warning.} This is one of the key new points in the whole argument,
where we cannot thoughtlessly imitate \cite{RN1} or \cite{SV10}. Namely,
expressing an element as a product of commutators without denomitators,
with parameters sitting where they should, is not quite the same as
just observing that taking large enough $l$ and $n$ we can kill all
the denominators in each factor on the right hand side of the Chevalley
commutator formula. This is precisely the point, where the cases
$\Phi=\C_2,\G_2$ require {\it substantial\/} extra care.
\par\smallskip
$\bullet$ First, assume that the right hand side of the Chevalley
commutator formula consists of one factor. In this case
$$ \biggl[x_\a\Bigl(\frac{t^l}{s^k}a\Bigr),
x_{\beta}\Bigl(\frac{s^{n}}{t^{m}}b\Bigr)\biggr]=
x_{\a+\b}(N_{\a\b}s^{n-k}t^{l-m}ab). $$
\noindent
Taking $n\ge 2p+k$ and $l\ge 2q+m$ we can rewrite this commutator as
a commutator without denominators as follows:
$$ \biggl[x_\a\Bigl(\frac{t^l}{s^k}a\Bigr),
x_{\beta}\Bigl(\frac{s^{n}}{t^{m}}b\Bigr)\biggr]=
\big[x_\a(s^pt^qa),x_{\beta}(s^{n-k-p}t^{l-m-q}b)\big]. $$
\noindent
Observe, that the right hand side belongs to
$\big[E(\Phi,s^pt^qR,s^pt^q\ma),E(\Phi,s^pt^qR,s^pt^q\mb)\big]$.
\par
The assumption of this item amounts to saying that $|\a|=|\b|$,
with the sole exception of two short roots in $\G_2$, whose sum
is a short root, where the right hand side of the Chevalley
commutator formula consists of {\it three\/} factors, rather
than one.
\par
Thus, in fact, we have established somewhat more, than claimed.
Namely, assume that if $\g=\a+\b$, $|\a|=|\b|$, and, moreover,
the mutual angle of $\a$ and $\b$ is not $2\pi/3$ if $\a,\b$
are short roots of $\Phi=\G_2$. Then for any $h\ge 2p$, any
$r\ge 2q$, any $a\in\ma$ and any $b\in\mb$ one has
$$ x_{\a+\b}(N_{\a\b}s^ht^rab)\in
\big[E(\Phi,s^pt^qR,s^pt^q\ma),E(\Phi,s^pt^qR,s^pt^q\mb)\big]. $$
\par
In particular, this proves Case 1 for {\it simply\/} laced systems.
\par\smallskip
$\bullet$ Actually, with the use of the above argument it is
easy to completely settle also the case of {\it doubly\/} laced
systems, except for $\Phi=\C_2$. Indeed, for doubly laced systems
it accounts for the case, where $|\a|=|\b|$. Now, let $\a$ and $\b$
have distinct lengths. If necessary, replacing $[x,y]$ by
$[y,x]=[x,y]^{-1}$, we can assume that $\a$ is long, and $\b$
is short. In this case
$$ y=\biggl[x_\a\Bigl(\frac{t^l}{s^k}a\Bigr),
x_{\beta}\Bigl(\frac{s^{n}}{t^{m}}b\Bigr)\biggr]=
x_{\a+\b}(N_{\a\b11}s^{n-k}t^{l-m}ab)
x_{\a+2\b}(N_{\a\b12}s^{2n-k}t^{l-2m}ab^2). $$
\par
Now, if $\Phi=\F_4$, every root embeds in a subsystem of type
$\A_2$. In other words, the root $\a+\b$ is a sum of two {\it short\/}
roots, whereas $\a+2\b$ is a sum of two {\it long\/} roots. Thus,\
taking $2n\ge 2p+k$ and $l\ge 2q+2m$, we see that each elementary
unipotent on the right hand side of the above formula is itself
a single commutator in
$\big[E(\Phi,s^pt^qR,s^pt^q\ma),E(\Phi,s^pt^qR,s^pt^q\mb)\big]$.
\par
The cases $\B_l$, $l\ge 3$ and $\C_l$, $l\ge 3$, are treated in
a similar way, and are only marginally trickier.
\par\smallskip
$\bullet$ First, let $\Phi=\B_l$, $l\ge 3$. In this case every
{\it long\/} root is a sum of two long roots. Clearly, the first
elementary factor in expression of the commutator
\begin{multline*}
z=\biggl[x_\a\Bigl({s^p}{t^{l-m-q}}a\Bigr),
x_{\beta}\Bigl({s^{n-k-p}}{t^{q}}b\Bigr)\biggr]=
x_{\a+\b}(N_{\a\b11}s^{n-k}t^{l-m}ab)\cdot\\
x_{\a+2\b}(N_{\a\b12}s^{2n-2k-p}t^{l-m+q}ab^2),
\end{multline*}
coincides with the first elementary factor of the above commutator.
If $l\ge 2q+m$ and $n\ge 2p+k$, one has
$z\in\big[E(\Phi,s^pt^qR,s^pt^q\ma),E(\Phi,s^pt^qR,s^pt^q\mb)\big]$.
On the other hand, if, moreover, $l\ge 2q+2m$, then the long
root unipotent
$$ yz^{-1}=x_{\a+2\b}(N_{\a\b12}(s^{2n-k}t^{l-2m}-s^{2n-2k-p}t^{l-m+q})ab^2) $$
\noindent
is also a single commutator in
$\big[E(\Phi,s^pt^qR,s^pt^q\ma),E(\Phi,s^pt^qR,s^pt^q\mb)\big]$,
by the first item.
\par\smallskip
$\bullet$ Now, let $\Phi=\C_l$, $l\ge 3$. In this case every
{\it short\/} root is a sum of two short roots. Set $p'=p$
if $p\equiv k\pmod 2$, and $p'=p+1$ otherwise. Then {\it second\/}
elementary factor in expression of the commutator
\begin{multline*}
z=\biggl[x_\a\Bigl({s^{p'}}{t^{l-2m-2q}}a\Bigr),
x_{\beta}\Bigl({s^{(2n-k-p')/2}}{t^{q}}b\Bigr)\biggr]=
x_{\a+\b}(N_{\a\b11}s^{(2n-k+p')/2}t^{l-2m-q}ab)\cdot\\
x_{\a+2\b}(N_{\a\b12}s^{2n-k}t^{l-2m}ab^2),
\end{multline*}
coincides with the second elementary factor of the commutator $y$.
If $l\ge 3q+2m$ and $n\ge(2p+k+1)/2$, one has
$z\in\big[E(\Phi,s^pt^qR,s^pt^q\ma),E(\Phi,s^pt^qR,s^pt^q\mb)\big]$.
On the other hand, if, moreover, $n\ge (5p+k+1)/2$, then the short
root unipotent
$$ yz^{-1}=x_{\a+\b}(N_{\a\b11}(s^{n-k}t^{l-m}-s^{(2n-k+p')/2}t^{l-2m-q})ab) $$
\noindent
is also a single commutator in
$\big[E(\Phi,s^pt^qR,s^pt^q\ma),E(\Phi,s^pt^qR,s^pt^q\mb)\big]$,
by the first item.
\par\smallskip
$\bullet$ Finally, let $\Phi=\C_2$ or $\G_2$. We will see that
under assumption $2\in R^*$ the proof is essentially the same,
as in the above cases. First, let $\Phi=\C_2$, and let $\a,\b$,
$\a\neq\pm\b$, be two short roots. Then by the first item one
has
$$ x_{\a+\b}(2s^ht^rab)
%% =\big[x_\a(s^pt^qa/2),x_{\beta}(s^{h-p}t^{r-q}b)\big]
\in \big[E(\Phi,s^pt^qR,s^pt^q\ma),E(\Phi,s^pt^qR,s^pt^q\mb)\big]. $$
\noindent
Since $2\in R^*$, it follows that $x_{\a+\b}(2s^ht^rab)$ is
a single commutator of requested shape, whenever $h\ge 2p$ and
$r\ge 2q$. Now, we can repeat exactly the same argument, as
in the case $\Phi=\B_l$, $l\ge 3$.
\par
Next, let $\Phi=\G_2$. First, observe that by the first item
$x_{\a}(s^ht^rab)$ is already a single commutator of the required
shape for any $h\ge 2p$ and any $r\ge 2q$. Now, let $\a,\b$ be
two short roots, whose sum is a short root. Then the Chevalley
commutator formula takes the following form
$$ [x_{\a}(\xi),x_{\b}(\zeta)]=
x_{\a+\b}(\pm 2\xi\zeta)
x_{2\a+\b}(\pm 3\xi^2\zeta)x_{\a+2\b}(\pm 3\xi\zeta^2), $$
\noindent
see, for example, \cite{steinberg67,carter1972} or \cite{vavplot}.
\par
Now, setting here $\xi=s^pt^{r-q}a$ and $\zeta=s^{h-p}t^qb$,
for some $a\in\ma$ and $b\in\mb$, we see that
\begin{multline*}
x_{\a+\b}(\pm 2s^ht^rab)
x_{2\a+\b}(\pm 3s^{h+p}t^{2r-q}a^2b)
x_{\a+2\b}(\pm 3s^{2h-p}t^{r+q}ab^2)\in\\
\big[E(\Phi,s^pt^qR,s^pt^q\ma),E(\Phi,s^pt^qR,s^pt^q\mb)\big],
\end{multline*}
\noindent
for any $h\ge 2p$ and $r\ge 2q$. Since each of the resulting
long root elements is already a single commutator of requested
shape, and $2\in R^*$, one sees that $x_{\a+\b}(s^ht^rab)$
is a product of at most three commutators of requested shape.
Now, we conclude the analysis of this case by exactly the
same argument, as in the case of $\Phi=\G_2$, and conclude
that for any two linearly independent roots, any $\a\in\ma$,
$b\in\mb$, and any $n\ge 2p+3k$, $l\ge 2q+3m$, one has
$$ \biggl[x_\a\Bigl(\frac{t^l}{s^k}a\Bigr),
x_{\beta}\Bigl(\frac{s^{n}}{t^{m}}b\Bigr)\biggr]\in
\big[E(\Phi,s^pt^qR,s^pt^q\ma),E(\Phi,s^pt^qR,s^pt^q\mb)\big], $$
\noindent
in fact, the commutator on the left hand side is the product
of not more than eight commutators of two elementary
unipotents, belonging to $E(\Phi,s^pt^qR,s^pt^q\ma)$ and
$E(\Phi,s^pt^qR,s^pt^q\mb)$, respectively.
\par\smallskip\noindent
{\bf Case 2.} Let $\alpha=-\beta$, and one of the following holds,
$\alpha$ is short or $\Phi\not=\C_l$. By Lemma~\ref{lem1}, there
are roots $\gamma$ and $\delta$ such that $\gamma+\delta=\alpha$
and $N_{\gamma\delta11}=1$. We can assume that
$k$ and $l$ are even and decompose
$x_\a\Bigl(\displaystyle{\frac{t^l}{s^k}a}\Bigr)$ as follows
\begin{equation}\label{decip}
x_\a\Bigl(\frac{t^l}{s^k}a\Bigr)=
\Bigl[x_\gamma\Bigl(\frac{t^{l/2}}{s^{k/2}}\Bigr),
x_\delta\Bigl(\frac{t^{l/2}}{s^{k/2}}a\Bigr)\Bigr]
\prod_{i\gamma+j\delta\in \Phi}x_{i\gamma+j\delta}
\Big(-N_{\gamma\delta ij}
\frac{t^{l(i+j)/2}}{s^{k(i+j)/2}}a^j\Big),
\end{equation}
\noindent
where the product on the right hand side is taken over all roots
$i\gamma+j\delta\not=\alpha$. Consider the commutator formula
\begin{equation}\label{myb}
\Big[[y,z]\prod_{i=1}^{t} u_i,x\Big]=
\prod_{i=1}^{t}{}^{[y,z]\prod_{j=1}^{i-1} u_j}\big[u_i,x\big]\big[[y,z],x\big]
\end{equation}
\noindent
where by convention $\prod_{j=1}^0 u_j=1$. Now let
$y=x_{\gamma}\Bigl(\displaystyle{\frac{t^{l/2}}{s^{k/2}}}\Bigr)$,
$z=x_{\delta}\Bigl(\displaystyle{\frac{t^{l/2}}{s^{k/2}}a}\Bigr)$
and $u_i$'s stand for the terms $x_{i\gamma+j\delta}(*)$
in Equation~\ref{decip}. Let
$x=x_{\beta}\Bigl(\displaystyle{\frac{s^{n}}{t^{m}}b}\Bigr)$
and plug these in to Equation~\ref{myb}. The terms $[u_i,x]$ are all
of the form considered in Case~1, and thus for suitable $l$ and $n$
they belong to
$\big[E(\Phi,s^pt^qR,s^pt^q\ma),E(\Phi,s^pt^qR,s^pt^q\mb)\big]$.
Thus, it immediately follows that
$\prod_{i=1}^{t}{}^{[y,z]\prod_{j=1}^{i-1} u_j}\big[u_i,x\big]$
belongs to this commutator group.
\par
We are left to show that for a suitable $q$
$$ \big[[y,z],x\big]=
\bigg[\Bigl[x_\gamma\Bigl(\frac{t^{l/2}}{s^{k/2}}\Bigr),
x_\delta\Bigl(\frac{t^{l/2}}{s^{k/2}}a\Bigr)\Bigr],
x_{\beta}\Bigl(\frac{s^{n}}{t^{m}}b\Bigr)\bigg] $$
\noindent
is also in $\big[E(\Phi,s^pt^qR,s^pt^q\ma),E(\Phi,s^pt^qR,s^pt^q\mb)\big]$.
Consider the conjugate
\begin{multline*}
{\phantom{\bigg[}}^{x_\gamma\Bigl(-\frac{t^{l/2}}{s^{k/2}}\Bigr)}
\bigg[\Bigl[x_\gamma\Bigl(\frac{t^{l/2}}{s^{k/2}}\Bigr),
x_\delta\Bigl(\frac{t^{l/2}}{s^{k/2}}a\Bigr)\Bigr],
x_{\beta}\Bigl(\frac{s^{n}}{t^{m}}b\Bigr)\bigg]
=\\
{\phantom{\bigg[}}^{x_\gamma\Bigl(-\frac{t^{l/2}}{s^{k/2}}\Bigr)}
\bigg[{\Bigl[x_\delta\Bigl(\frac{t^{l/2}}{s^{k/2}}a\Bigr),
x_\gamma\Bigl(\frac{t^{l/2}}{s^{k/2}}\Bigr)\Bigr]}^{-1},
x_{\beta}\Bigl(\frac{s^{n}}{t^{m}}b\Bigr)\bigg].
\end{multline*}
\noindent
By the Hall---Witt identity it can be rewritten as
%% {}^x\bigl[[y,x^{-1}]^{-1},z\bigr]=
%% {}^y\bigl[x,[z,y^{-1}]^{-1}\bigr]\,{}^z\bigl[y,[x,z^{-1}]^{-1}\bigr]
\begin{multline*}
uv=
{\phantom{\bigg[}}^{x_\delta\Bigl(\frac{t^{l/2}}{s^{k/2}}a\Bigr)}
\bigg[x_\gamma\Bigl(-\frac{t^{l/2}}{s^{k/2}}\Bigr),
\Big[x_{\beta}\Bigl(\frac{s^{n}}{t^{m}}b\Bigr),
x_\delta\Bigl(-\frac{t^{l/2}}{s^{k/2}}a\Bigr)\Big]^{-1}\bigg]\cdot\\
{\phantom{\bigg[}}^{x_{\beta}\Bigl(\frac{s^{n}}{t^{m}}b\Bigr)}
\bigg[x_\delta\Bigl(\frac{t^{l/2}}{s^{k/2}}a\Bigr),
\Big[x_\gamma\Bigl(-\frac{t^{l/2}}{s^{k/2}}\Bigr),
x_{\beta}\Bigl(-\frac{s^{n}}{t^{m}}b\Bigr)\Big]^{-1}\bigg].
\end{multline*}
\noindent
Let us consider the factors separately.
\par
Since $\gamma,\delta\not=-\beta$, by Case~1 one can find
suitable $l$ and $n$ such that the commutator
$\Big[x_{\beta}\Bigl(\frac{s^{n}}{t^{m}}b\Bigr),
x_\delta\Bigl(-\frac{t^{l/2}}{s^{k/2}}a\Bigr)\Big]$ belongs to
$\big[E(\Phi,s^pt^qR,s^pt^q\ma),E(\Phi,s^pt^qR,s^pt^q\mb)\big]$,
and it immediately follows that $u$ belongs to this group.
\par
Applying Chevalley commutator formula to the internal
commutator in $v$, we have
$$ v={\phantom{\bigg[}}^{x_{\beta}\Bigl(\frac{s^{n}}{t^{m}}b\Bigr)}
\bigg[x_\delta\Bigl(\frac{t^{l/2}}{s^{k/2}}a\Bigr),
\prod_{i\gamma+j\beta\in\Phi}
x_{i\gamma+j\beta}\Big(-N_{\gamma\beta ij}
{\Bigl(-\frac{t^{l/2}}{s^{k/2}}\Bigr)}^i
{\Bigl(-\frac{s^{n}}{t^{m}}b\Bigr)}^j\Big)\bigg]. $$
\noindent
Now, for suitable $l$ and $n$ all
$x_{i\gamma+j\beta}\Bigl(\displaystyle{-N_{\gamma\beta ij}
{\Bigl(-\frac{t^{l/2}}{s^{k/2}}\Bigr)}^i
{\Bigl(-\frac{s^{n}}{t^{m}}b\Bigr)}^j}\Bigr)$
belong to $E(\Phi,s^{p'}t^{q'}\mb)$ for any prescribed $p'$ and $q'$.
Now employing Lemma~\ref{lem2} twice, we can secure that for
suitable $l$ and $n$ the second factor $v$ also belongs to the
commutator group
$\big[E(\Phi,s^pt^qR,s^pt^q\ma),E(\Phi,s^pt^qR,s^pt^q\mb)\big]$,
and we are done.

\smallskip
\noindent {\bf Case 3.} Let $\Phi=\C_l$ and $\alpha=-\beta$ be a long
root. Let $\Phi=\C_l$ and $\al=-\beta$ be a long root. By Lemma~\ref{lem1}
there exist roots $\gamma$ and $\delta$ such that either
$\gamma+2\delta=\beta$ and $N_{\gamma\delta 12}=1$, or
$2\gamma+\delta=\beta$ and $N_{\gamma\delta21}=1$. Like in the proof of
Lemma~\ref{lem2}, we lose nothing by looking at the {\it second\/}
case. Increasing $k$ and $l$, in necessary, we can assume that
$k$ and $l$ are divisible by 3 and decompose
$x_\a\Bigl(\displaystyle{\frac{t^l}{s^k}a}\Bigr)$ as follows
\begin{equation}\label{decip2}
x_\a\Bigl(\frac{t^l}{s^k}a\Bigr)=
\Bigl[x_\gamma\Bigl(\frac{t^{l/3}}{s^{k/3}}\Bigr),
x_\delta\Bigl(\frac{t^{l/3}}{s^{k/3}}a\Bigr)\Bigr]
\prod_{i\gamma+j\delta\in\Phi}x_{i\gamma+j\delta}
\Big(-N_{\gamma\delta ij}
\frac{t^{l(i+j)/3}}{s^{k(i+j)/3}}a^j\Big),
\end{equation}
\noindent
where the product is taken over all $(i,j)\neq(1,2)$. Now,
repeating the same argument as in Case 2, one can find suitable
$l$ and $n$ such that
$$ \bigg[x_\alpha\Big(\frac{t^l}{s^k}a\Big),
x_\beta\Big(\frac{s^n}{t^m}b\Big)\bigg]\in
\big[E(\Phi,s^pt^qR,s^pt^q\ma),E(\Phi,s^pt^qR,s^pt^q\mb)\big], $$
\noindent
as claimed.
\end{proof}

\begin{Lem}\label{dflebis}
If\/ $p,q,k,m$ and\/ $L$ are given, there exist\/ $l$ and\/ $n$,
independent of\/ $L$, such that
$$ \Big[E^L\Big(\Phi,\frac{t^l}{s^k}\ma\Big),
E^1\Big(\Phi,\frac{s^n}{t^m}\mb\Big)\Big]
\subseteq \big[E(\Phi,s^pt^qR,s^pt^q\ma),
E(\Phi,s^pt^qR,s^pt^q\mb)\big]. $$
\end{Lem}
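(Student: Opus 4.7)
The natural approach is induction on $L$, with the base case $L = 1$ being exactly Lemma~\ref{dfle}. For the inductive step, fix $e = u_1 u_2 \cdots u_L \in E^L(\Phi, t^l\ma/s^k)$, where each $u_i = x_{\alpha_i}(t^l a_i/s^k)$ with $\alpha_i \in \Phi$ and $a_i \in \ma$, and fix a single generator $f = x_\beta(s^n b/t^m)$ with $b \in \mb$. Iterating the commutator identity (C2) yields the decomposition
\[
[e, f] \;=\; {}^{u_1 \cdots u_{L-1}}[u_L, f] \cdot {}^{u_1 \cdots u_{L-2}}[u_{L-1}, f] \cdots {}^{u_1}[u_2, f] \cdot [u_1, f],
\]
so that it suffices to show each conjugated factor ${}^{u_1 \cdots u_{i-1}}[u_i, f]$ belongs to the target commutator subgroup
\[
H := \big[E(\Phi, s^p t^q R, s^p t^q\ma),\; E(\Phi, s^p t^q R, s^p t^q\mb)\big].
\]

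The plan is to handle each such factor in two steps. First, by Lemma~\ref{dfle}, and allowing ourselves to enlarge auxiliary exponents $p'$ and $q'$ depending on $p, q, k, m$ and $L$, we obtain $l$ and $n$ such that each single commutator $[u_i, f]$ already lies in the intermediate group
\[
H' := \big[E(\Phi, s^{p'} t^{q'} R, s^{p'} t^{q'}\ma),\; E(\Phi, s^{p'} t^{q'} R, s^{p'} t^{q'}\mb)\big].
\]
Second, observe that in the localisation $R_{st}$ each $u_j = x_{\alpha_j}(t^l a_j/s^k)$ can be viewed as an element of $E^1(\Phi, R/s^k)$ by absorbing $t^l a_j$ into a single element of $R$. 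Consequently every conjugating prefix $u_1 \cdots u_{i-1}$ belongs to $E^{L-1}(\Phi, R/s^k)$. Invoking the two-parameter version of Lemma~\ref{lat-bis} --- obtained by running the proof of Lemma~\ref{lem3} with both $s$ and $t$ in play --- we may then choose $p'$ and $q'$ large enough that conjugation by any element of $E^{L-1}(\Phi, R/s^k)$ transports $H'$ into $H$.

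Combining the two steps, each factor ${}^{u_1 \cdots u_{i-1}}[u_i, f]$ belongs to $H$, and hence so does the product $[e, f]$. Since the generators $e$ and $f$ were arbitrary, the claimed inclusion follows. The main subtle point is not the induction itself, but the bookkeeping of auxiliary exponents: one has to enlarge $p', q'$ enough to absorb the $L$-fold conjugation through Lemma~\ref{lat-bis}, while ensuring that the resulting values of $l$ and $n$ furnished by Lemma~\ref{dfle} remain uniform in the ideals $\ma$ and $\mb$ --- this equi-continuity being precisely what makes the lemma usable in the patching arguments of \S~9. A secondary technical delicacy is the justification that the $u_j$ genuinely enter the hypothesis of Lemma~\ref{lat-bis} as elements of $E^{L-1}(\Phi, R/s^k)$, which is what allows the $t^l$ appearing in their numerators to be harmlessly merged into a ring element when exhibiting them as products of length $L-1$.
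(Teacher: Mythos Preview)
Your approach matches the paper's: both expand $[e,f]$ via (C2) into a product of conjugated single commutators ${}^{u_1\cdots u_{i-1}}[u_i,f]$ and then argue that each factor lands in the target $H$. Where the paper tersely writes that the two factors of $H$ ``are both normalized by $E(\Phi,s^Pt^QR)$, where $P\geq p$, $Q\ge q$'' and declares the lemma immediate from Lemma~\ref{dfle}, you spell out the mechanism: first place each $[u_i,f]$ in an intermediate $H'$ with enlarged exponents $p',q'$, then absorb the conjugating prefix $u_1\cdots u_{i-1}\in E^{L-1}(\Phi,R/s^k)$ via (a two-parameter form of) Lemma~\ref{lat-bis}. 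This is the right way to make the paper's normalization remark precise, since the $u_j$ carry denominators $s^k$ and do \emph{not} literally lie in any $E(\Phi,s^Pt^QR)$ with $P,Q\ge 0$.

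One point worth flagging: your construction makes $p',q'$ depend on $L$ (to absorb the $(L{-}1)$-fold conjugation through Lemma~\ref{lat-bis}), and hence so do $l,n$. The statement, however, asserts that $l,n$ are \emph{independent} of $L$ --- this is used downstream in Lemma~\ref{lkjh}, where one must pass from $E^L$ to the full group $E(\Phi,\tfrac{t^l}{s^k}R,\tfrac{t^l}{s^k}\ma)$. The paper's own two-line proof does not visibly achieve this independence either, so the clause ``independent of $L$'' appears to be an overstatement of what the argument actually delivers; you should be aware that your write-up, as it stands, establishes only the $L$-dependent version.
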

\begin{proof}
An easy induction, using identity (C2), shows that
$$ \bigg[\prod_{i=1}^K u_i,x\bigg]=
\prod_{i=1}^K {}^{\prod_{j=1}^{K-i}u_j}\big[u_{K-i+1},x\big], $$
\noindent
where by convention $\prod_{j=1}^0 u_j=1$. This, with the fact that
$E(\Phi,s^pt^qR,s^{p}t^q\ma)$ and $E(\Phi,s^pt^qR,s^{p}t^q\mb)$
are both normalized by $E(\Phi,s^Pt^QR)$, where $P\geq p$, $Q\ge q$,
show that this lemma immediately follows from the previous one.
\end{proof}

Recall that
$\displaystyle{E\Big(\Phi,\frac{t^l}{s^k}R,\frac{t^l}{s^k}\ma\Big)}$
is generated by all elements of the form
$\displaystyle{{}^u x_\alpha\Bigl(\frac{t^l}{s^k}a\Bigr)}$,
where $u\in\displaystyle{E^L\Bigl(\Phi,\frac{t^l}{s^k}R\Bigr)}$,
for some $L$, and $a\in\ma$.
\noindent
\begin{Lem}
\label{lkjh}
If\/ $p,q,k,m$ are given, there exist\/ $l$ and\/ $n$ such that
$$ \Big[E\Big(\Phi,\frac{t^l}{s^k}R,\frac{t^l}{s^k}\ma\Big),
E^1\Big(\Phi,\frac{s^n}{t^m}\mb\Big)\Big]
\subseteq \big[E(\Phi,s^pt^qR,s^pt^q\ma),
E(\Phi,s^pt^qR,s^pt^q\mb)\big].  $$
\end{Lem}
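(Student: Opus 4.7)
The subgroup $E(\Phi,t^l/s^kR,t^l/s^k\ma)$ is by definition generated by the conjugates ${}^u x_\alpha(t^la/s^k)$, where $u\in E^L(\Phi,t^l/s^kR)$ for some $L$, $\alpha\in\Phi$, and $a\in\ma$. The plan is to fix such a generator together with an element $x_\beta(s^nb/t^m)\in E^1(\Phi,s^n/t^m\mb)$ and show that their commutator lies in the target group, provided $l$ and $n$ are chosen sufficiently large in terms of $p,q,k,m,L$ but independently of $\ma,\mb$.

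The first move is to apply identity (C5) to transfer the conjugation by $u$ to the outside:
$$\bigl[{}^ux_\alpha(t^la/s^k),x_\beta(s^nb/t^m)\bigr]={}^u\bigl[x_\alpha(t^la/s^k),{}^{u^{-1}}x_\beta(s^nb/t^m)\bigr].$$
For the inner conjugate I would write ${}^{u^{-1}}x_\beta(s^nb/t^m)=x_\beta(s^nb/t^m)\cdot w$ with $w=[x_\beta(s^nb/t^m)^{-1},u^{-1}]$, and iterate the Chevalley commutator formula over the $L$ factors of $u$. For $l,n$ large enough in terms of $k,m,L$, $w$ is then a bounded product of elementary root unipotents whose parameters have arbitrarily high positive powers of both $s$ and $t$ in the numerator together with an honest factor from $\mb$. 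Expanding the inner commutator via (C1),
$$\bigl[x_\alpha(t^la/s^k),x_\beta(s^nb/t^m)\cdot w\bigr]=\bigl[x_\alpha(t^la/s^k),x_\beta(s^nb/t^m)\bigr]\cdot{}^{x_\beta(s^nb/t^m)}\bigl[x_\alpha(t^la/s^k),w\bigr].$$
The first factor lands in $[E(\Phi,s^{p'}t^{q'}R,s^{p'}t^{q'}\ma),E(\Phi,s^{p'}t^{q'}R,s^{p'}t^{q'}\mb)]$ for any prescribed $p',q'$ by Lemma \ref{dfle}. For the second factor, iterating (C1) against $w$ reduces it to a product of commutators of single elementary elements, each of which falls under Lemma \ref{dfle} or Lemma \ref{dflebis} applied with the natural $s\leftrightarrow t$ swap, since the parameters of $w$ carry $t$ in the numerator rather than in the denominator. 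The outer conjugation by $x_\beta(s^nb/t^m)\in E^1(\Phi,R/t^m)$ is then absorbed by the $s\leftrightarrow t$ symmetric version of Lemma \ref{lat-bis}.

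It remains to absorb the outer conjugation by $u$. Since $u\in E^L(\Phi,t^l/s^kR)\subseteq E^L(\Phi,R/s^k)$, this is precisely the content of Lemma \ref{lat-bis}: for the given $p,q$ and the length $L$, one picks $p',q'$ large enough so that conjugation by $u$ maps the target commutator group with parameters $p',q'$ into the one with parameters $p,q$. The main obstacle will be the orderly bookkeeping of the cascade of exponent choices --- one fixes $p,q$ first, then uses Lemma \ref{lat-bis} to produce $p',q'$, then invokes Lemmas \ref{dfle} and \ref{dflebis} to fix $l,n$ while accommodating the length $L'$ of $w$ --- together with the verification that the resulting bounds, inherited from the equi-continuity statements of Lemmas \ref{lem2}, \ref{lem3} and \ref{lat-bis}, really do depend only on $\Phi$ and the exponents $p,q,k,m,L$, never on the ideals $\ma,\mb$.
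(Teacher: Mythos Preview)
Your route differs from the paper's. The paper does not expand ${}^{u^{-1}}z$ via Chevalley at all; instead it writes $[{}^xy,z]=\big[y[y^{-1},x],z\big]={}^y\big[[y^{-1},x],z\big]\cdot[y,z]$ via (C2) and then applies the Hall--Witt identity (C3) to the first factor, arriving at
\[
[{}^xy,z]={}^{yx^{-1}}\Bigl({}^y\bigl[x^{-1},[z,y]\bigr]\cdot{}^{z^{-1}}\bigl[y^{-1},[x^{-1},z^{-1}]\bigr]\Bigr)\cdot[y,z].
\]
The inner pieces $[y,z]$, $[z,y]$ and $[x^{-1},z^{-1}]$ are handled directly by Lemma~\ref{dfle} together with the normality of $E(\Phi,s^pt^qR,s^pt^q\ma)$ and $E(\Phi,s^pt^qR,s^pt^q\mb)$ in $E(\Phi,s^pt^qR)$; one appeal to Lemma~\ref{lat} then absorbs the conjugation by $z^{-1}$. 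The whole point of invoking Hall--Witt is that the arbitrary-length element $x$ ends up only \emph{conjugating} or \emph{commuting with} something already known to sit in the target, so that no explicit $L$-fold Chevalley iteration is required.

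Your (C5)-based expansion of $w=[z^{-1},u^{-1}]$ can in principle be pushed through, but there is a genuine concern with your bookkeeping. You explicitly allow $l$ and $n$ to depend on $L$, yet the lemma is about the \emph{full} group $E\big(\Phi,\frac{t^l}{s^k}R,\frac{t^l}{s^k}\ma\big)$, which contains conjugates ${}^uy$ with $u$ of \emph{every} length. A single choice of $l,n$ (depending only on $p,q,k,m$ and $\Phi$) must serve all $L$ simultaneously --- the paper stresses precisely this, that its $l,n$ are ``independent of $L$'', and this is what makes the subsequent passage to the whole group via (C2) legitimate. In your scheme, each conjugation step in the iterated Chevalley expansion of $w$ can cost a factor $s^{i_\Phi k}$ in the numerator, so after $L$ steps the starting exponent $n$ would have to grow with $L$. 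The paper's Hall--Witt manoeuvre is exactly what sidesteps this cascade.
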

\begin{proof}
Obviously, it suffices to prove that for any given $p,q,k,m$
and any $L$, there exist $l$ and $n$ independent of $L$ such
that
\begin{equation}\label{hghg}
\bigg[{\vphantom{\Big(}}^{E^L\left(\Phi,\frac{t^l}{s^k}R\right)}
E^1\Big(\Phi,\frac{t^l}{s^k}\ma\Big),
E^1\Big(\Phi,\frac{s^n}{t^m}\mb\Big)\bigg]
\subseteq \big[E(\Phi,s^pt^qR,s^pt^q\ma),
E(\Phi,s^pt^qR,s^pt^q\mb)\big],
\end{equation}
\noindent
after that the lemma follows from (\ref{hghg}) and identity (C2).
\par
Let $x\in\displaystyle{E^L\Big(\Phi,\frac{t^l}{s^k}R\Big)}$,
$y\in\displaystyle{E^1\Big(\Phi,\frac{t^l}{s^k}\ma\Big)}$
and $z\in\displaystyle{E^1\Big(\Phi,\frac{s^n}{t^m}\mb\Big)}$.
Using (C2) and the Hall---Witt identity we can write
\begin{multline*}
[{}^xy,z]=\big[y[y^{-1},x],z\big]=
{}^y\big[[y^{-1},x],z\big]\cdot[y,z]
={}^{yx^{-1}}\Bigl({}^{x}\bigl[[y^{-1},x],z\bigr]\Bigr)\cdot[y,z]=\\
{}^{yx^{-1}}\Bigl({}^{y}\bigl[x^{-1},[z,y]\bigr]
\cdot{}^{z^{-1}}\big[y^{-1},[x^{-1},z^{-1}]\bigr]\Bigr)\cdot[y,z].
\end{multline*}
\par
Now Lemma~\ref{dfle}, along with the fact that $E(\Phi,s^pt^qR,s^pt^q\ma)$
and $E(\Phi,s^pt^qR,s^pt^q\mb)$ are both normal in $E(\Phi,s^pt^qR)$,
imply that for suitable $l$ and $n$ all three commutators
$[y,z]$,  ${}^{y}\bigl[x^{-1},[z,y]\big]$ and
$\big[y^{-1},[x^{-1},z^{-1}]\big]$
are in
$$ \big [E(\Phi,s^pt^qR,s^pt^q\ma),E(\Phi,s^pt^qR,s^pt^q\mb)\big]. $$
\par
Now, we can invoke Lemma~\ref{lat} to ensure that there
are suitable $l$ and $n$ such that the conjugate
${}^{z^{-1}}\big[y^{-1},[x^{-1},z^{-1}]\big]$, and therefore the
whole commutator $[{}^xy,z]$, is in
$\big[E(\Phi,s^pt^qR,s^pt^q\ma),E(\Phi,s^pt^qR,s^pt^q\mb)\big]$.
\end{proof}

%%%%%%%%%%%%%%%%%%%%%%%%%%%%%%%%%%%%%%%%%%%%%%%%%%%%%%%%%%%%%%%%%%%%%%%%%%%%%

\section{Mixed commutator formula: localisation proof}\label{hgfdsa}

Now we are all set to complete a localisation proof of Theorem~\ref{main}.
In fact, we will prove a much more powerful result, in the spirit
of Theorem 5.3 of \cite{RN1}. We start with the following lemma,
whose proof mimics the proof of \cite{RN1}, Lemma~5.2, modulo
replacing elementary factors by commutators, and correcting
some misprints.
\begin{Lem}\label{justlem}
Fix an element\/ $s \in R$, $s\neq 0$. Then for any\/ $k$ and\/ $p$
there exists an\/ $r$ such that for any\/ $a\in\ma$, any\/
$g\in G(\Phi,R,s^r\mb)$ and any maximal ideal\/ $\mm$ of\/ $R$,
there exists an element\/ $t\in R\backslash\mm$, and an
integer\/ $l$ such that
\begin{equation}\label{bnm}
\bigg[x_{\a}\Big(\frac{t^l}{s^k}a\Big),F_s(g)\bigg]
\in\Big[E\big(\Phi,F_s(s^pR),F_s(s^p\ma)\big),
E\big(\Phi,F_s(s^pR),F_s(s^p\mb)\big)\Big].
\end{equation}
\end{Lem}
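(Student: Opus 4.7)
\emph{Strategy.} The plan is to patch along the given maximal ideal $\mm$: localise further at $\mm$ so that $g$ sits in a local ring, apply Gauss decomposition via Lemma~\ref{abe-bis} to split $g$ into unipotent and torus parts, and then absorb the new denominators introduced by this auxiliary localisation into a single element $t\in R\setminus\mm$. Concretely, assume first $s\notin\mm$ (the case $s\in\mm$ is easier and is handled by taking $t=s$). Since $s^r\mb R_\mm$ lies in the Jacobson radical $\mm R_\mm$, Lemma~\ref{abe-bis} produces a decomposition $F_\mm(g)=e'h'$ with $e'\in E(\Phi,s^r\mb R_\mm)$ and $h'\in T(\Phi,R_\mm,s^r\mb R_\mm)$. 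Only finitely many denominators from $R\setminus\mm$ appear in $e'$ and $h'$, so a single $t\in R\setminus\mm$ clears them all, yielding $F_{st}(g)=\tilde e\,\tilde h$ in $G(\Phi,R_{st})$, where $\tilde e\in E^L(\Phi,(s^r/t^m)\mb)$ for some $L,m$ and $\tilde h$ is a torus element whose character values are congruent to $1$ modulo $(s^r/t^m)\mb R_t$.

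\emph{Splitting and estimation.} By identity (C1),
\[
\bigl[x_\alpha(t^l/s^k\cdot a),F_s(g)\bigr]=\bigl[x_\alpha(t^l/s^k\cdot a),\tilde e\bigr]\cdot{}^{\tilde e}\bigl[x_\alpha(t^l/s^k\cdot a),\tilde h\bigr].
\]
For the first factor, Lemma~\ref{lkjh} (applied with $q=0$) produces $l$ and a threshold $n_0$ depending only on $\Phi,p,k,m,L$ but not on $\ma,\mb,g,\mm$, forcing this commutator into the target mixed commutator subgroup whenever $r\ge n_0$. For the second factor, Lemma~\ref{lem-diagonal} gives $\tilde h\,x_\alpha(\xi)\,\tilde h^{-1}=x_\alpha(\epsilon\xi)$ with $\epsilon\in R_{st}^*$ and $\epsilon-1\in(s^r/t^m)\mb R_t$, whence $[x_\alpha(\xi),\tilde h]=x_\alpha((1-\epsilon)\xi)$. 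Substituting $\xi=t^l/s^k\cdot a$ and taking $l\ge m$, $r\ge 2p+k$, this becomes $x_\alpha(-s^{r-k}t^{l-m}ab')$ for some $b'\in\mb$; by Lemma~\ref{lem1} and the proof of Lemma~\ref{nlln} it decomposes as a single commutator $[x_\beta(s^p a),x_\gamma(s^{r-k-p}t^{l-m}b')]$ plus a Chevalley tail of higher $s$-depth, all lying in $[E(\Phi,s^pR,s^p\ma),E(\Phi,s^pR,s^p\mb)]$ (the tail terms landing there inductively, since they have parameters in $s^{2p}\ma\mb$ or deeper). Finally, the outer conjugation by $\tilde e$ is absorbed using Lemma~\ref{lat-bis}, at the cost of further enlarging $r$. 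Taking $r$ to be the maximum of these thresholds, each depending only on $\Phi,p,k$, completes the proof.

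\emph{Main obstacle.} The subtle point is the exceptional case $\Phi=\C_l$ with $\alpha$ long, where Lemma~\ref{lem-diagonal} gives a quadratic action of the $H$-component of $\tilde h$, and where $T$ strictly contains $H$ so that $\tilde h$ may also involve a weight element. Factoring $\tilde h=h_{\varpi_l}(\epsilon_0)\,h_0$ with $h_0\in H(\Phi,R_{st})$ as in Lemma~\ref{lem-weights} resolves this: the weight element contributes a linear character value $\epsilon_0$, while the $H$-part contributes $\epsilon^2$, and since both $\epsilon_0-1$ and $\epsilon^2-1=(\epsilon-1)(\epsilon+1)$ remain in $(s^r/t^m)\mb R_t$, the estimate of the previous paragraph goes through verbatim. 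A secondary technical concern is that the data $L,m$ produced by the Gauss decomposition depend on $g$, but the bounds provided by Lemmas~\ref{lkjh} and~\ref{lat-bis} are independent of $L$ and of the ideals $\ma,\mb$, which is precisely the equi-continuity built into the relative conjugation and commutator calculus of \S\S~7--8.
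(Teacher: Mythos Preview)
Your overall strategy matches the paper's: Gauss decomposition of $F_\mm(g)$ via Lemma~\ref{abe-bis}, passage to a single $t\in R\setminus\mm$, splitting by (C1), treating the unipotent factor with Lemma~\ref{lkjh} and the torus factor via Lemmas~\ref{lem-weights}/\ref{lem-diagonal}, including the special analysis for $\Phi=\C_l$ with $\alpha$ long. So the architecture is right.

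There is, however, a genuine gap. Your entire computation takes place in $G(\Phi,R_{st})$: the decomposition $F_{st}(g)=\tilde e\,\tilde h$ only exists there, and consequently the identity you write for $[x_\alpha(t^l/s^k\cdot a),F_s(g)]$ is really an identity for its \emph{image under $F_t$}. What you actually prove is that $F_t(x)$ lies in $F_t$ of the target mixed commutator subgroup. But the conclusion~(\ref{bnm}) demands $x$ itself lies in that subgroup inside $G(\Phi,R_s)$, and $F_t:G(\Phi,R_s)\to G(\Phi,R_{st})$ need not be injective. The paper closes this gap by first reducing to a Noetherian ring (via $R=\varinjlim R_i$), then invoking Lemma~\ref{noetherian} to find $n$ with $F_t$ injective on $G(\Phi,R_s,t^nR_s)$, taking $l>n$ so that $x\in G(\Phi,R_s,t^nR_s)$, exhibiting an explicit product $y$ of commutators in $G(\Phi,R_s,t^nR_s)$ with $F_t(x)=F_t(y)$, and concluding $x=y$. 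You omit this step entirely, and without it the argument does not yield the stated inclusion.

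Two smaller points. First, your remark that the case $s\in\mm$ ``is handled by taking $t=s$'' is wrong: the statement requires $t\in R\setminus\mm$, which excludes $t=s$ when $s\in\mm$. In fact no case distinction is needed; the paper's argument is insensitive to whether $s\in\mm$. Second, your concern about $L,m$ depending on $g$ is resolved not quite as you say: in Lemma~\ref{lkjh} (and its precursor Lemma~\ref{dfle}) the required power $n$ of $s$ depends only on $p,k$, while the dependence on $m$ is absorbed into $l$, which \emph{is} allowed to depend on $g$ and $\mm$. So $r$ can indeed be chosen uniformly, but the reason is this asymmetry in the bounds, not a blanket independence.
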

Note that here $q$ will depend on the choice of $x_\alpha$.
\begin{proof}
By 5.1 one has $G(\Phi,R)=\varinjlim G(\Phi,R_i)$, where the
limit is taken over all finitely generated subrings of $R$.
Thus, without loss of generality we may assume that $R$ is
Noetherian. To be specific, we can replace $R$ by the ring
generated by $a$, $s$ and the matrix entries of $g$ in a
faithful polynomial representation.
\par
Since $R_\mm$ is a local ring, by Lemma~\ref{abe-bis} we have the
decomposition
$$ G(\Phi,R_\mm,\mb_\mm)=E(\Phi,R_\mm,\mb_\mm)T(\Phi,R_\mm,\mb_\mm). $$
\noindent
Thus, one can decompose $F_\mm(g)$ as $F_\mm(g)=uh$ where
$u\in E(\Phi,R_\mm,\mb_\mm)\le G(\Phi,R_{\mm})$
and $h\in T(\Phi, R_\mm,\mb_\mm)$.
\par
Since $G(\Phi,R_M)=\varinjlim G(\Phi,R_t)$, over all $t\in
R\setminus M$, and the same holds for $E(\Phi,s^qR_M)$,
$T(\Phi,R_M,s^qR_M)$, etc., we can find an element $t\in
R\setminus M$ such that already $F_t(g)$ can be factored as
$F_t(g)=uh$, where $u\in E(\Phi,R_t,s^qR_t)$ and
$z\in T(\Phi,R_t,s^qR_t)$.
\par
On the other hand, since $R$ is assumed to be Noetherian,
$R_s$ is also Noetherian and by Lemma~\ref{noetherian} there exists
an $n$ such that the canonical homomorphism
$$ F_t:G(\Phi,R_s,t^nR_s)\map G(\Phi,R_{st}) $$
\noindent
is injective. Next, we take any $l>n$. Since
$x_{\al}\displaystyle{\Big(\frac{t^l}{s^k}a\Big)}\in
G(\Phi,R_s,t^n\ma_s)$,
and the principal congruence subgroup $G(\Phi,R_s,t^n\ma_s)$ is
normal in $G(\Phi,R_s)$, one has
$$ x=\Big[x_{\al}\Big(\frac{t^l}{s^k}a\Big),F_s(g)\Big]
\in G(\Phi,R_s,t^n\ma_s)\le G(\Phi,R_s,t^nR_s). $$
\noindent
Consider the image $F_t(x)\in G(\Phi,R_{st})$ of $x$ under
localisation with respect to $t$. Since $F_t$ is a homomorphism,
one has
$$ F_t(x)=\bigg[F_t\Big(x_{\a}\Big(\frac{t^l}{s^k}a\Big)\Big),F_{st}(g)\bigg]. $$
\noindent
Now $F_{st}(g)$ can be factored as
$F_{st}(g)=F_s(u)F_s(h)\in G(\Phi,R_{st})$. It follows that
\begin{eqnarray}
F_t(x)&=&\bigg[F_t\Big(x_{\al}\Big(
\frac{t^l}{s^k}a\Big)\Big),F_s(u)F_s(h)\bigg]= \nonumber\\
&&\bigg[F_t\Big(x_{\al}\Big(\frac{t^l}{s^k}a\Big)\Big),F_t(u)\bigg]\cdot
{\vphantom{\bigg[}}^{F_t(u)}
\bigg[F_s\Big(x_{\al}\Big(\frac{t^l}{s^k}a\Big)\Big),F_s(h)\bigg]. \nonumber
\end{eqnarray}
\par
Now, for all cases apart from the case, where $G(\Phi,R)=G_{\ad}(\C_l,R)$,
and $\a$ is a long root, by Lemmas~\ref{lem-weights} or~\ref{lem-diagonal}
one can choose a decomposition $F_t(g)=uh$, where $h$ commutes with
$x_{\a}(*)$. Therefore,
$$ F_t(x)=\bigg[F_t\Big(x_{\al}\Big(\frac{t^l}{s^k}a\Big)\Big),
F_t(u)\bigg]. $$
\noindent
Now, by Lemma~\ref{lkjh} one can choose such $l$ and $n$ that
\begin{equation}\label{goal}
F_t(x)\in \big[E(\Phi,F_{st}(s^pt^qR),F_{st}(s^pt^q\ma)),
E(\Phi,F_{st}(s^pt^qR),F_{st}(s^pt^q\mb))\big],
\end{equation}
\noindent
considered as a subgroup of $G(\Phi,R_{st})$.
In general, this is the first factor of the above expression
for $F_t(x)$.
\par
In the exceptional case we can choose $h=h_{\varpi_l}(\e)$,
for some $\e\equiv 1\pmod{s^r\mb}$. Clearly, also
$\e^{-1}\equiv 1\pmod{s^r\mb}$, and thus
$$ \bigg[F_s\Big(x_{\al}\Big(\frac{t^l}{s^k}a\Big)\Big),F_s(h)\bigg]=
x_{\al}\big({t^l}{s^{r-k}}F_{st}(ab)\big), $$
\noindent
for some $b\in\mb$. Now a reference to Lemmas~17 and~\ref{lem3}
shows that one can choose such $l$ and $n$ that the second factor
$$ {\vphantom{\bigg[}}^{F_t(u)}
\bigg[F_s\Big(x_{\al}\Big(\frac{t^l}{s^k}a\Big)\Big),F_s(h)\bigg]=
{\vphantom{\big[}}^{F_t(u)}x_{\al}\big({t^l}{s^{r-k}}F_{st}(ab)\big) $$
\noindent
sits in the same commutator subgroup, as the first factor.
Thus, in all cases we get inclusion (\ref{goal}). In other
words, $F_t(x)$ can be expressed as
$$ F_t(x)=\prod_{i=1}^L
\Big[{\vphantom{\big(}}^{x_{-\b_i}(F_{st}(s^pt^qc_i))}
x_{\b_i}\big(F_{st}(s^pt^qa_i)\big),
{\vphantom{\big(}}^{x_{-\g_i}(F_{st}(s^pt^qd_))}
x_{\g_i}(F_{st}\big(s^pt^qb_i)\big)\Big], $$
\noindent
for some $\b_i,\g_i\in\Phi$, some $a_i\in\ma$, $b_i\in\mb$ and
some $c_i,d_i\in R$.
\par
Form the following product of commutators in $G(\Phi,R_s)$,
$$ y=\prod_{i=1}^L
\Big[{\vphantom{\big(}}^{x_{-\b_i}(F_{s}(s^pt^qc_i))}
x_{\\b_ia}\big(F_{s}(s^pt^qa_i)\big),
{\vphantom{\big(}}^{x_{-\g_i}(F_{s}(s^pt^qd_i))}
x_{\g_i}(F_{s}\big(s^pt^qb_i)\big)\Big], $$
\noindent
by the very construction, $F_t(x)=F_y(y)$. On the other hand,
$x,y\in G(\Phi,R_s,t^nR_s)$ and the restriction of $F_t$
to $G(\Phi,R_s,t^nR_s)$ is injective by Lemma~\ref{noetherian},
it follows $x=y$ and thus we established (\ref{bnm}).
\end{proof}
Now we are in a position to finish the proof of Theorem 1 and,
in fact, of the following much stronger result. Morally, it
culminates all calculations of Sections 7, 8 and 9, and asserts
that for any elements $g_1,\ldots,g_K\in E(\Phi,R_s,\ma_s)$,
in finite number, and any $s$-adic neighborhoods $Y$ and $Z$
of $e$ in the {\it elementary\/} subgroups $E(\Phi,R,\ma)$ and
$E(\Phi,R,\mb)$, respectively, there exists a small $s$-adic
neighbourhood $X$ of $e$ in the principal congruence subgroup
$G(\Phi,R,\mb)$ such that $[g_i,F_s(X)]\subseteq F_s([Y,Z])$,
for all $i$. This is a very powerful result, which will be used
in this form in the proposed description of some classes of
intermediate subgroups. See \cite{VP2,SVY} for a clarification,
why one needs commutator formulae in this stronger form.
In turn, this result can be easily deduced from Lemma~\ref{justlem}
by a standard patching argument using partitions of $1$.
\begin{The}\label{mainlem}
Let\/ $\Phi$ be a reduced irreducible root system,\/ $\rk(\Phi)\ge 2$.
In the cases\/ $\Phi=\C_2,\G_2$ assume additionally that\/ $2\in R^*$.
Then for any\/ $s\in R$, $s\neq 0$, any\/ $p,k$ and\/ $L$, there
exists an\/ $r$ such that for any two ideals\/ $\ma$ and\/ $\mb$
of a commutative ring\/ $R$, one has
\begin{multline}\label{eq:final}
\bigg[E^L\Big(\Phi,\frac{1}{s^k}R,\frac{1}{s^k}\ma\Big),
F_s\big(G(\Phi,R,s^r\mb)\big)\bigg]\subseteq\\
\Big[E\big(\Phi,F_s(s^pR),F_s(s^p\ma)\big),
E\big(\Phi,F_s(s^pR),F_s(s^p\mb)\big)\Big].
\end{multline}
\end{The}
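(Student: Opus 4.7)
The plan is the standard patching construction alluded to in the remark preceding the theorem: Lemma~\ref{justlem} supplies the local statement at each maximal ideal, and a partition of unity, combined with the conjugation estimates of \S~7, globalises it. Fix $s$, $p$, $k$, $L$; by~5.1 we reduce at the outset to the case when $R$ is Noetherian.

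The crucial step is the base case of a single commutator $[x_\alpha(a/s^k), F_s(g)]$ with $a\in\ma$. I would fix an enlarged parameter $p' \ge p$ (chosen below) and let $r = r(p',k)$ be given by Lemma~\ref{justlem}. For each maximal ideal $\mm$ of $R$ that lemma yields $t_\mm \in R\setminus\mm$ and $l_\mm\in\Nat$ such that $[x_\alpha(t_\mm^{l_\mm}a/s^k), F_s(g)]$ lies in the level-$p'$ target. The $t_\mm^{l_\mm}$ generate the unit ideal, so Noetherianness keeps only finitely many of them, and after raising to a common exponent $l$ one writes $1 = \sum_{i=1}^{N} c_i t_i^{l}$ for suitable $c_i \in R$. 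Additivity of $x_\alpha$ expresses $x_\alpha(a/s^k) = \prod_i x_\alpha(c_i t_i^l a/s^k)$, and identity (C2) then yields $[x_\alpha(a/s^k), F_s(g)] = \prod_i {}^{u_1\cdots u_{i-1}}[u_i, F_s(g)]$ with $u_i = x_\alpha(c_i t_i^l a/s^k)$. Each individual factor lies in the level-$p'$ target by Lemma~\ref{justlem}, while the conjugating partial products, sitting in $E^{N-1}(\Phi,\frac{1}{s^k}R)$, are absorbed back into the level-$p$ target via Lemma~\ref{lat-bis} provided $p'$ was chosen large enough.

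To extend to $E^L$, I would expand $v = v_1\cdots v_L$ via (C2) as $\prod_i {}^{v_1\cdots v_{i-1}}[v_i, F_s(g)]$. Each generator has the form $v_i = {}^{w_i}x_{\alpha_i}(a_i/s^k)$; using ${}^w x = x \cdot [x^{-1}, w]$ together with the Hall--Witt identity exactly as in the proof of Lemma~\ref{lkjh}, the inner commutator $[v_i, F_s(g)]$ decomposes into commutators of the base-case type together with nested commutators $[x_\beta(r/s^k), [w_i^{\pm 1}, F_s(g)]]$; the latter are handled by Lemma~\ref{lkjh}, since $[w_i^{\pm 1}, F_s(g)]$ sits in a congruence subgroup at level $F_s(s^r\mb)$. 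The outer conjugations are absorbed once more by Lemma~\ref{lat-bis}.

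The main difficulty is uniformity of $r$ in the ideals $\ma$, $\mb$, in the partition size $N$, and in the conjugating lengths arising from the generators of $E^L$. Fortunately the estimates in Lemmas~\ref{lat-bis} and~\ref{lkjh} depend only on $\Phi$, $k$, $p$ and the relevant conjugating length --- never on the ideals or the specific element --- which is precisely the equi-continuity emphasised throughout \S\S~7--8. Consequently an arbitrarily large conjugating length can be absorbed by enlarging $p'$ once at the outset, and a single invocation of Lemma~\ref{justlem} then delivers the uniform $r = r(p',k)$ required by the theorem.
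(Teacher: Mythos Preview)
Your strategy is exactly the paper's: establish the base case
\[
\bigg[E^1\Big(\Phi,\frac{1}{s^k}\ma\Big),F_s\big(G(\Phi,R,s^r\mb)\big)\bigg]
\subseteq
\Big[E\big(\Phi,F_s(s^qR),F_s(s^q\ma)\big),E\big(\Phi,F_s(s^qR),F_s(s^q\mb)\big)\Big]
\]
by patching the local inclusions from Lemma~\ref{justlem} via a partition of unity, then propagate to $E^L$ by (C2) and absorb the outer conjugations with Lemma~\ref{lat-bis}. Your base case is carried out correctly and coincides with the paper's argument word for word.

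There is, however, one genuine slip in your passage to $E^L$. You write that the nested commutator $[x_\beta(r/s^k),[w_i^{\pm1},F_s(g)]]$ is ``handled by Lemma~\ref{lkjh}, since $[w_i^{\pm1},F_s(g)]$ sits in a congruence subgroup at level $F_s(s^r\mb)$''. But Lemma~\ref{lkjh} requires its second argument to lie in $E^1\big(\Phi,\frac{s^n}{t^m}\mb\big)$, an \emph{elementary} set; merely sitting in a congruence subgroup is not enough, and no lemma of \S\S~7--8 trades congruence for elementary directly. The missing observation is that the base case you have just proved, applied with the ideal $\ma$ replaced by $R$, already gives
\[
\big[w_i^{\pm1},F_s(g)\big]\in
\Big[E\big(\Phi,F_s(s^{q'}R)\big),E\big(\Phi,F_s(s^{q'}R),F_s(s^{q'}\mb)\big)\Big]
\le E\big(\Phi,F_s(s^{q'}R),F_s(s^{q'}\mb)\big),
\]
for any prescribed $q'$ (iterating with (C2) and Lemma~\ref{lem3} if $w_i$ is longer than one root unipotent). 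Once this inner commutator is known to be elementary of high $s$-level, expanding it as a product of generators and applying (C1) together with Lemma~\ref{lkjh} to each factor finishes the job. The paper itself is rather terse at this very spot (``applying to~(8) the commutator formula~(C2)''), so your more explicit Hall--Witt reduction is welcome; it just needs this one correction to the justification of the nested term.
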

\begin{proof} First we claim that for the same $k$ and $L$ and
{\it any\/} $q$ there exists an $r$ such that
\begin{multline}\label{popt}
\bigg[E^1\Big(\Phi,\frac{1}{s^k}\ma\Big),
F_s\big(G(\Phi,R,s^r\mb)\big)\bigg]\subseteq\\
\Big[E\big(\Phi,F_s(s^qR),F_s(s^q\ma)\big),
E\big(\Phi,F_s(s^qR),F_s(s^q\mb)\big)\Big].
\end{multline}
\noindent
Indeed, let
$\displaystyle{x_\alpha\Big(\frac{1}{s^k}a\Big)\in
E^1\Big(\Phi,\frac{1}{s^k}\ma\Big)}$,
and $g\in G(\Phi,R,s^r\mb)$. For any maximal
ideal $\mm\lhd R$, choose an $t_\mm\in R\backslash\mm$ and a
positive integer $l_\mm$ according to (\ref{bnm}). Since the
collection of all $t_\mm^{l_\mm}$ is not contained in any maximal
ideal, we may find a finite number of them,
$t_1^{l_1},\ldots,t_K^{l_K}$ and such $c_1,\ldots,c_K\in R$ that
$$ t_1^{l_1}c_1+\ldots+t_K^{l_K}c_K=1. $$
\noindent
It follows that
$$ x_\alpha\Big(\frac{1}{s^k}a\Big)=
x_\alpha\bigg(a\sum_{i=1}^K\frac{t_i^{l_i}}{s^k}c_i\bigg)=
\prod_{i=1}^Kx_\alpha\Big(\frac{t_i^{l_i}}{s^k}c_i a\Big). $$
\noindent
Since there are only finitely many factors, it follows from
(\ref{bnm}) that for {\it any\/} $h$ there exists an $r$
such that
\begin{equation}\label{kik}
\bigg[x_\alpha\Big(\frac{t_i^{l_i}}{s^k}c_i a\Big),F_s(g)\bigg]\in
\Big[E\big(\Phi,F_s(s^hR),F_s(s^h\ma)\big),
E\big(\Phi,F_s(s^hR),F_s(s^h\mb)\big)\Big].
\end{equation}
\noindent
A direct computation using (\ref{kik}), Formula (C2) and
Lemma~\ref{lat-bis}, shows that if $h$ was large enough, we get
\begin{multline*}
\bigg[x_\alpha\Big(\frac{1}{s^k}a\Big),F_s(g)\bigg]=
\bigg[\prod_{i=1}^Kx_\alpha\Big(\frac{t_i^{l_i}}{s^k}c_i a\Big),F_s(g)\bigg]\in\\
\Big[E\big(\Phi,F_s(s^qR),F_s(s^q\ma)\big),
E\big(\Phi,F_s(s^qR),F_s(s^q\mb)\big)\Big].
\end{multline*}
\noindent
This proves our claim.
\par
Now, applying to (8) the commutator formula (C2), we see that
if $q$ was large enough, we get
\begin{multline*}
\bigg[E^L\Big(\Phi,\frac{1}{s^k}R,\frac{1}{s^k}\ma\Big),
F_s\big(G(\Phi,R,s^r\mb)\big)\bigg]\subseteq\\
{\vphantom{\Big[}}^{E^{L-1}\Big(\Phi,\frac{1}{s^k}R,\frac{1}{s^k}\ma\Big)}
\Big[E\big(\Phi,F_s(s^pR),F_s(s^p\ma)\big),
E\big(\Phi,F_s(s^pR),F_s(s^p\mb)\big)\Big].
\end{multline*}
\noindent
To finish the proof it only remains to once more invoke
Lemma~\ref{lat-bis}.
\end{proof}
\par
Now we are in a position to prove a slightly weaker statement of Theorem~$\ref{main}$. Namely, 
$$ [E(\Phi,R,\ma),G(\Phi,R,\mb)]=[E(\Phi,R,\ma),E(\Phi,R,\mb)]. $$
To get the inclusion of the left hand side into the right hand side,
set $s=1$ in Theorem~\ref{mainlem}. Inclusion in the other direction is obvious.

%%%%%%%%%%%%%%%%%%%%%%%%%%%%%%%%%%%%%%%%%%%%%%%%%%%%%%%%%%%%%%%%%%%%%%%%

\section{Relative versus absolute, and variations}

Using the absolute standard commutator formula and calculations
of Sections~\ref{level2} and~\ref{level} we can give a proof of Theorem~\ref{main} --- but not
of the stronger Theorem~\ref{mainlem}.
\par
\begin{proof}[Proof of Theorem~$\ref{main}$]
By Lemma~\ref{lemma1} one has
$$ \big[E(\Phi,R,\ma),C(\Phi,R,\mb)\big]=
\Big[\big[E(\Phi,R),E(\Phi,R,\ma)\big],C(\Phi,R,\mb)\Big]. $$
\noindent
Since all subgroups here are normal in $G(\Phi,R)$, Lemma~\ref{lemma1}
implies
\begin{multline*}
\big [E(\Phi,R,\ma),C(\Phi,R,\mb)\big]\le \\
 \le\Big [E(\Phi,R,\ma),\big[E(\Phi,R),C(\Phi,R,\mb)\big]\Big]\cdot
\Big [E(\Phi,R),\big[E(\Phi,R,\ma),C(\Phi,R,\mb)\big]\Big].
\end{multline*}
\noindent
Applying the {\it absolute\/} standard commutator formula
\cite[Theorem~1]{HPV} = Lemma~\ref{thm1} above, to the first
factor on the right hand side, we immediately see that it
{\it coincides\/} with $[E(\Phi,R,\ma),E(\Phi,R,\mb)]$.
\par
On the other hand, applying to the second factor on the right hand
Lemma \ref{GUGU} followed by Lemma~\ref{thm1} and Lemma 17, we can
conclude that it is {\it contained\/} in
$$ \big[E(\Phi,R),G(\Phi,R,\ma\mb)\big]=E(\Phi,R,\ma\mb)\le
\big [E(\Phi,R,\ma),E(\Phi,R,\mb)\big]. $$
\noindent
Thus, the left hand side is contained in the right hand side,
the inverse inclusion being obvious.
\end{proof}
Lemma~\ref{lemma1} asserts that the commutator of two elementary subgroups,
one of which is absolute, is itself an elementary subgroup. One can ask,
whether one always has
$$ \big [E(\Phi,R,\ma),E(\Phi,R,\mb)\big]=E(\Phi,R,\ma\mb)? $$
\noindent
Easy examples show that in general this equality may fail quite
spectacularly. In fact, when $\ma=\mb$, one can only conclude that
$$ E(\Phi,R,\ma^2)\le \big [E(\Phi,R,\ma),E(\Phi,R,\ma)\big]\le E(\Phi,R,\ma). $$
\noindent
with right bound attained for some {\it proper\/} ideals, such as an
ideal $\ma$ generated by an idempotent.
\par
Nevertheless, the true reason, why the equality in Lemma~\ref{lemma1} holds, is
not the fact that one of the ideals $\ma$ or $\mb$ coincides with $R$,
but only the fact that $\ma$ and $\mb$ are comaximal.
\begin{The}
Let\/ $\Phi$ be a reduced irreducible root system,\/ $\rk(\Phi)\ge 2$.
When\/ $\Phi=\operatorname{B}_2$ or\/ $\Phi=\operatorname{G}_2$,
assume moreover that\/ $R$ has no residue fields\/ ${\Bbb F}_{\!2}$
of $2$ elements.
Further, let\/ $R$ be a commutative ring and\/ $\ma,\mb\unlhd R$ be
two comaximal ideals of\/ $R$, i.e., $\ma+\mb=R$. Then one has the following equality
$$ \big [E(\Phi,R,\ma),E(\Phi,R,\mb)\big]=E(\Phi,R,\ma\mb). $$
\end{The}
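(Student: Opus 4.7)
The plan is to use Lemma~\ref{nlln} for the inclusion $E(\Phi,R,\ma\mb) \subseteq H$, where I write $H := [E(\Phi,R,\ma), E(\Phi,R,\mb)]$. Comaximality makes the extra hypothesis on $R$ in the $\C_l$ case of that lemma automatic: squaring $1 = a_0 + b_0$ with $a_0 \in \ma$, $b_0 \in \mb$ yields $1 = a_0^2 + 2a_0b_0 + b_0^2$, and multiplying by an arbitrary element of $\ma\mb$ shows $\ma\mb = \ma\supb{2}\mb + 2\ma\mb + \ma\mb\supb{2}$, which is exactly the missing condition.

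For the reverse inclusion, note first that Theorem~\ref{main} together with Lemma~\ref{GUGU} already places $H$ inside $G(\Phi,R,\ma\mb)$. I intend to show directly that every basic commutator $[x_\alpha(\xi), x_\beta(\eta)]$ with $\xi \in \ma$, $\eta \in \mb$, and $\alpha, \beta \in \Phi$ lies in $E(\Phi,R,\ma\mb)$. Combined with Lemma~\ref{lemma2}, the commutator identities (C1)--(C5), and normality of $E(\Phi,R,\ma\mb)$ in $E(\Phi,R)$, this implies $H \subseteq E(\Phi,R,\ma\mb)$ after a routine bootstrap treating conjugates. When $\alpha \ne -\beta$, the Chevalley commutator formula writes $[x_\alpha(\xi), x_\beta(\eta)] = \prod_{i,j\geq 1} x_{i\alpha+j\beta}(N_{\alpha\beta ij}\xi^i\eta^j)$, and every argument $\xi^i\eta^j$ lies in $\ma^i\mb^j \subseteq \ma\mb$, so the product already sits in $E(\Phi,\ma\mb)$.

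The main obstacle will be the opposite-root case $\alpha = -\beta$. The idea is to exploit the comaximality decomposition $\xi = a_0\xi + b_0\xi$, where $b_0\xi \in \ma\mb$, together with the analogous $\eta = a_0\eta + b_0\eta$ where $a_0\eta \in \ma\mb$, to reduce modulo $E(\Phi,R,\ma\mb)$ to the claim $[x_\alpha(a_0\xi), x_{-\alpha}(b_0\eta)] \in E(\Phi,R,\ma\mb)$. I then invoke Lemma~\ref{lem1}: for $\alpha$ short (or any $\alpha$ when $\Phi \ne \C_l$) pick $\gamma, \delta \in \Phi$ with $\gamma + \delta = \alpha$ and $N_{\gamma\delta 11} = 1$, yielding
\[ x_\alpha(a_0\xi) = [x_\gamma(a_0), x_\delta(\xi)] \cdot T, \]
with a tail $T$ indexed by roots $\mu \ne \alpha$. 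Expanding the outer commutator via (C2) and then applying the Hall--Witt identity (C3) to the resulting double commutator $[[x_\gamma(a_0), x_\delta(\xi)], x_{-\alpha}(b_0\eta)]$ rewrites it in terms of iterated commutators whose innermost entries $[x_\delta(-\xi), x_{-\alpha}(b_0\eta)]$ and $[x_{-\alpha}(-b_0\eta), x_\gamma(-a_0)]$ are between non-opposite root subgroups (since $\gamma, \delta \ne \pm\alpha$) and, by the Chevalley step of the previous paragraph, already lie in $E(\Phi,\ma\mb)$, each carrying one factor from $\ma$ paired with one from $\mb$. The tail contributions $[T, x_{-\alpha}(b_0\eta)]$ are handled the same way factor by factor, and the outer conjugations are absorbed by normality of $E(\Phi,R,\ma\mb)$. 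The long-root case in $\C_l$ is treated identically using the second clause of Lemma~\ref{lem1}, with $\alpha = \gamma + 2\delta$ and $N_{\gamma\delta 12} = 1$, at the price of a marginally longer tail.
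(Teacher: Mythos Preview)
Your proposal has a real gap in the reduction step you dismiss as a ``routine bootstrap''. Knowing that $[x_\alpha(\xi),x_\beta(\eta)]\in E(\Phi,R,\ma\mb)$ for all roots does not formally yield $H\le E(\Phi,R,\ma\mb)$. By Lemma~\ref{lemma2} the generators of $E(\Phi,R,\mb)$ are $z_\beta(\eta,\omega)={}^{x_{-\beta}(\omega)}x_\beta(\eta)$ with $\omega\in R$ arbitrary, so after (C1)--(C2) you must control $[x_\alpha(\xi),z_\beta(\eta,\omega)]$. Using (C4) moves the conjugation across: you get ${}^{x_{-\beta}(\omega)}[\,{}^{x_{-\beta}(-\omega)}x_\alpha(\xi),\,x_\beta(\eta)\,]$. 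When $\alpha=\beta$ the inner term ${}^{x_{-\beta}(-\omega)}x_\alpha(\xi)=z_\alpha(\xi,-\omega)$ does \emph{not} factor as a product of root unipotents with parameters in $\ma$, and the reduction stalls. The same obstruction recurs if you try to induct on the length of a conjugating word. Your Hall--Witt trick for the opposite-root case relied on both arguments being single root unipotents so that the inner commutators land in non-opposite Chevalley situations; that feature is lost once one argument is an arbitrary element of $E(\Phi,R,\mb)$. The gap \emph{can} be closed, but only by inserting Lemma~\ref{GUGU} (to place the inner commutator in $G(\Phi,R,\ma\mb)$) followed by the absolute standard commutator formula (Lemma~\ref{thm1}) at every Hall--Witt step --- and at that point the root-by-root computation is doing no work.

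That is exactly what the paper does directly at the subgroup level, avoiding the element-wise calculation entirely. From $E(\Phi,R)=E(\Phi,R,\ma)\cdot E(\Phi,R,\mb)$ and $E(\Phi,R)$-perfectness (Lemma~\ref{lemma1}) it gets $E(\Phi,R,\ma)\le[E(\Phi,R,\ma),E(\Phi,R,\ma)]\cdot H$; then Lemma~\ref{GUGU} replaces $H$ by $G(\Phi,R,\ma\mb)$; commuting with $E(\Phi,R,\mb)$ and applying the three-subgroup lemma together with $[G(\Phi,R,\ma\mb),E(\Phi,R)]=E(\Phi,R,\ma\mb)$ finishes the inclusion $H\le E(\Phi,R,\ma\mb)$. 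Your handling of the reverse inclusion via Lemma~\ref{nlln} and the identity $\ma\mb=\ma\supb{2}\mb+2\ma\mb+\ma\mb\supb{2}$ in the comaximal case is fine and matches the paper's intent.
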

\begin{proof}
First of all, observe that by Lemmas~\ref{lemma1} and~\ref{nllnnn} one has
$$
E(\Phi,R,\ma)=\big[E(\Phi,R,\ma),E(\Phi,R)\big]=
\big [E(\Phi,R,\ma),E(\Phi,R,\ma)\cdot E(\Phi,R,\mb)\big]. $$
\noindent
Thus,
\begin{multline*}
E(\Phi,R,\ma)\le \big[E(\Phi,R,\ma),E(\Phi,R,\ma)\big]\cdot
\big[E(\Phi,R,\ma),E(\Phi,R,\mb)\big]\le\\
\le \big[E(\Phi,R,\ma),E(\Phi,R,\ma)\big]\cdot
E(\Phi,R,\ma\mb).
\end{multline*}
\noindent
Commuting this inclusion with $E(\Phi,R,\mb)$, we see that
\begin{multline*}
\big[E(\Phi,R,\ma),E(\Phi,R,\mb)\big]\le
\Big [\big [E(\Phi,R,\ma),E(\Phi,R,\ma)\big],E(\Phi,R,\mb)\Big]\cdot\\
\big[E(\Phi,R,\ma\mb),E(\Phi,R,\mb)\big].
\end{multline*}
\par
The absolute standard commutator formula, applied to the second
factor, shows that its is contained in
$$ \big[G(\Phi,R,\ma\mb),E(\Phi,R,\mb)\big]\le
\big[G(\Phi,R,\ma\mb),E(\Phi,R)\big]=E(\Phi,R,\ma\mb). $$
\par
On the other hand, applying to the first factor Lemma (C3),
and then again the absolute standard commutator formula, we see
that it is contained in
\begin{multline*}
\Big [\big[E(\Phi,R,\ma),E(\Phi,R,\mb)\big],E(\Phi,R,\ma)\Big]\le \\
\le\big [G(\Phi,R,\ma\mb),E(\Phi,R,\ma)\big]\le\\
\le\big[G(\Phi,R,\ma\mb),E(\Phi,R)\big]=E(\Phi,R,\ma\mb).
\end{multline*}
\par
Together with Lemma~\ref{nllnnn} this finishes the proof.
\end{proof}

%%%%%%%%%%%%%%%%%%%%%%%%%%%%%%%%%%%%%%%%%%%%%%%%%%%%%%%%%%%%%%%%%%%%%%%%

\section{Where next?}

In this section we state and very briefly discuss some further
relativisation problems, related to the results of the present paper.
We are convinced that these problems can be successfully addressed
with our methods. Throughout we assume that $\rk(\Phi)\ge 2$.
\par
Outside of some initial observations in Sections 3, 4, and 6, in
the present paper we consider only the usual relative subgroups
depending on one ideal of the ground ring, rather than relative
subgroups defined in terms of admissible pairs. In fact, calculations
necessary to unwind the relative commutator calculus are already
awkward enough with one parameter, especially in rank 2. After some
thought, we decided not to overcharge the first exposition of our
method in this setting with unwieldy technical details. Actually,
most of these details are immaterial for the method itself.
This suggests the following problems.
\begin{prob}
\label{prob--doub} Develop working versions of relative conjugation calculus
and relative commutator calculus, for relative subgroups corresponding
to admissible pairs.
\end{prob}
\begin{prob}
\label{prob-stand} Prove the relative standard commutator formula
$$\big  [E(\Phi,R,\ma,\mc),C(\Phi,R,\mb,\md)\big ]=
\big [E(\Phi,R,\ma,\mc),E(\Phi,R,\mb,\md)\big]. $$
\end{prob}
There is little doubt that what one needs to solve these problems
is a stubborn combination of the methods of the present paper with
those developed by Michael Stein in \cite{Stein71}. Solution of the
following problem is also in sight, and would require mostly
technical efforts.
\begin{prob}
\label{prob-bounds} Obtain explicit length estimates in the
relative conjugation calculus and relative commutator calculus.
\end{prob}
Let us mention some further problems, where we hope to apply
methods of the present paper. Firstly, we have in mind
description of subnormal subgroups of Chevalley groups.
\begin{prob}
\label{p1} Describe subnormal subgroups of a Chevalley
group\/ $G(\Phi,R)$.
\end{prob}
It is well known that this problem is essentially a special case
of the following more general problem.
\begin{prob}
\label{p2} Describe subgroups of a Chevalley group\/ $G(\Phi,R)$,
normalised by the relative elementary subgroup\/ $E(\Phi,R,\mq)$,
for an ideal\/ $\mq\unlhd R$.
\end{prob}
Conjectural answer may be stated as follows: there exists an integer
$m=m(\Phi)$, depending only on $\Phi$, with the following property.
For any subgroup $H\le G(\Phi,R)$ normalised by $E(\Phi,R,\mq)$ there
exist an ideal $\ma\unlhd R$ such that
$$ E(\Phi,R,\mq^m\ma)\le H\le C(\Phi,R,\ma). $$
\noindent
The ideal $\ma$ is unique up to equivalence relation $\lozenge_{\mq}$.
\par
The real challenge is to find the smallest possible value of $m$.
For instance, for the case of $\GL(n,R)$, $n\ge 3$, it has
taken the following values:
\par\smallskip
$\bullet$ $m=7$ for $n\ge 4$, John Wilson, 1972 \cite{wilson72},
\par\smallskip
$\bullet$ $m=24$ (under some stability conditions), Anthony Bak,
1982 \cite{bak82},
\par\smallskip
$\bullet$ $m=6$, Leonid Vaserstein, 1986 \cite{vaser86},
\par\smallskip
$\bullet$ $m=48$, Li Fuan and Liu Mulan, 1987 \cite{LL},
\par\smallskip
$\bullet$ $m=5$, the second
author 1990 \cite{vavilov90},
\par\smallskip
$\bullet$ $m=4$, Vaserstein 1990 \cite{vaser90}.
\par\smallskip\noindent
An exposition of these results with detailed proofs may be found
in \cite{ZZ}. Clearly, \cite{bak82} and \cite{LL} drop out of the
mainstream.
The reason is that \cite{bak82} was published some 15 years after
completion, and \cite{LL} relied upon \cite{bak82}. Nevertheless,
these papers are very pertinent in what concerns discussion of
equivalence relation $\lozenge_{\mq}$.
\par
For other classical groups the best known results are due to
Gerhard Habdank \cite{Ha1,Ha2} and the third author
\cite{ZZ}--\cite{ZZ2}, under assumption $2\in R^*$, and to
You Hong, in general, see the discussion in \cite{RNZ}.
%% it asserts that $m=7$,
\par
For exceptional groups there are no published results. Recently,
the second and the third authors have modified the third generation
proof of the main structure theorems \cite{VG,VGN}, and obtained
the following values: $m=7$ for Chevalley groups of types $E_6$
and $E_7$. This result will be published in a separate paper.
But to get results with the same bound for groups of type $\E_8$
one will have to use localisation.
\par
Other problems we intend to address with relative concern
description of various classes of intermediate subgroups,
see \cite{NV95,VSsamara,LV} for a survey. In \cite{SVY} we
specifically discuss how localisation comes into play.
Let us mention two of the most immediate such problems.
\begin{prob}
\label{prob-sub} Describe the following classes of subgroups
\par\smallskip
$\bullet$ subgroups in $\GL(27,R)$, containing $E(\E_6,R)$,
\par\smallskip
$\bullet$ subgroups in $\Sp(56,R)$, containing $E(\E_7,R)$.
\end{prob}
These problems are discussed by Alexander Luzgarev in
\cite{luzgarev2004}, where one can find conjectural answers.
Before that the second author and Victor Petrov
\cite{VP1,VP2,VP3,P1,petrov3}, and independently and
simultaneously You Hong \cite{Y1,Y2,Y3} decribed overgroups
of classical groups, in the corresponding $\GL(n,R)$.
The proofs of these results partly relied on localisation.
Immediately thereafter Alexander Luzgarev described subgroups
of $G(\E_6,R)$, containing $E(\F_4,R)$, in his splendid paper
\cite{luzgarev2008}, also using localisation, see also
\cite{Lu-thesis}.
\par
Also, we propose to apply the methods of the present paper
to describe overgroups of subsystem subgroups in exceptional
groups.
\begin{prob}
\label{prob-over} Describe subgroups in  $G(\Phi,R)$, containing
$E(\Delta,R)$, under as\-sump\-tion that $\Delta^\perp=\emptyset$
and all irreducible components of $\Delta$ except maybe one
have rank $\ge 2$.
\end{prob}
The following problem appeared as Problem 9 in \cite{yoga}.
It seems to be extremely challenging, and would certainly require
the full force of localisation-completion\footnote{After the
submission of the present paper, jointly with Alexei Stepanov
we succeeded in solving this problem in the special case of
$\SL(n,R)$. It did in fact require both the full force of the
relative commutator calculus, with two parameters, and new
birelative and trirelative versions of Bak's completion
theorem \cite{B4}, to avoid relativisation with several parameters.
We are positive that the same strategy works
for all Chevalley groups, but it may take quite a while to
supply actual details, primarily because many of the fundamental
results, classical for $\GL(n,R)$, are simply not there in this
larger generality.}.
Its solution would be a simultaneous generalisation
of the results in \cite{RN1,BRN}, as also of our Theorem 1.
\begin{prob}
\label{prob-multi}
Let\/ $R$ be a ring of finite Bass---Serre dimension\/
$\delta(R)=d<\infty$, and let\/ $(I_i,\Gamma_i)$, $1\le i\le m$, be
form ideals of\/ $(R,\Lambda)$. Prove that for any\/ $m>d$ one has
\begin{multline*}
\Big[\big [\ldots[G(\Phi,R,I_1),G(\Phi,R,I_2)],\ldots\big ],G(\Phi,R,I_m)\Big]=\\
\Big[\big [\ldots[E(\Phi,R,I_1),E(\Phi,R,I_2)],\ldots\big ],E(\Phi,R,I_m)\Big].
\end{multline*}
\end{prob}
Let us also reiterate very ambitious Problems 7 and 8 posed
in \cite{RNZ}. The first of these problems refers to the context
of odd unitary groups, as created by Victor Petrov
\cite{P1,petrov2,petrov3}.
\begin{prob}
\label{p7} Generalise results of the present paper to odd
unitary groups.
\end{prob}
One of the first steps towards a solution of this problem, and
other related problems for odd unitary groups was
recently done by Rabeya Basu \cite{Basu}.
\par
The next problem refers to the recent context of isotropic reductive
groups. Of course, it only makes sense over commutative rings, but on the
other hand, a lot of new complications occur, due to the fact that
relative roots do not form a root system, and the interrelations of
the elementary subgroup with the group itself are abstruse even over
fields (the Kneser---Tits problem). Still, we are convinced that after
the recent breakthrough by Victor Petrov and Anastasia Stavrova
\cite{PS08,stavrova} most necessary tools are already there.
See also their subsequent papers with Alexander Luzgarev and
Ekaterina Kulikova \cite{luzgarevstavrova,kulikovastavrova}.
\begin{prob}
\label{p8} Obtain results similar to those of the present paper for
[groups of points of] isotropic reductive groups.
\end{prob}
Of course, here one shall have to develop the whole conjugation and
commutator calculus almost from scratch.
\par
Results of the present paper were first announced in our joint
paper \cite{yoga} with Alexei Stepanov. We thank him for
numerous extremely useful discussions. He and an anonimous referee 
carefully read the original manuscript and suggested many
improvements.

%%%%%%%%%%%%%%%%%%%%%%%%%%%%%%%%%%%%%%%%%%%%%%%%%%%%%%%%%%%%%%%%%%%%%%%%%

\end{document}

%%%%%%%%%%%%%%%%%%%%%%%%%%%%%%%%%%%%%%%%%%%%%%%%%%%%%%%%%%%%%%%

As a compromise, we decided to prove all subsidiary results, and
develop relative {\it conjugation\/} calculus under the usual
assumption $\rk(\Phi)\ge 2$. However, as was the case already in
\cite{RNZ}, in the more ambitious relative {\it commutator\/}
calculus --- and thus imperatively, in our main results --- we
assume that $\rk(\Phi)\ge 3$. We plan to present detailed proofs
for the rank 2 case in a separate article \cite{RNZ2}.

At first, we were even tempted to exclude the analysis of these
cases altogether.
\par

Since we excluded the case $\G_2$, the product on the right hand side
consists of not more than two factors.

\begin{Lem}\label{lalem}
If $p,k,K$ are given, then there is an $q$ such that
$$ \bigg[E^1(\Phi,s^q\ma),E^K\Big(\Phi,\frac{R}{s^k},\frac{\mb}{s^k}\Big)\bigg]
\subseteq \big[E(\Phi,s^pR,s^p\ma),E(\Phi,s^pR,s^p\mb)\big]. $$
\end{Lem}
\begin{proof}
We first show that for a given $k,p$, there is an $q$ such that
\begin{equation}\label{hyoo}
\bigg[E^1(\Phi,s^q\ma),{}^{E^1\left(\Phi,\frac{R}{s^k}\right)}
E^1\Big(\Phi,\frac{\mb}{s^k}\Big)\bigg]
\subseteq \big[E(\Phi,s^pR,s^p\ma),E(\Phi,s^pR,s^p\mb)\big].
\end{equation}
\noindent
This follows easily by combining previous lemmas: Let $x\in E^1(\Phi,s^q\ma)$,
$y\in E^1(\Phi,R/s^k)$ and $z\in E^1(\Phi,\mb/s^k)$. Then
$$ [x,{}^y z]={}^y[{}^{y^{-1}}x,z]$$
\noindent
Now Lemma~\ref{lem2} guarantees that there is a suitable $q$ such that
${}^{y^{-1}}x \in E(\Phi,s^pR,s^p\ma)$. Immediately applying Lemma~\ref{lkjh},
gives us a new $q$ such that
$$ [{}^{y^{-1}}x,z]\in[E(\Phi,s^pR,s^p\ma),E(\Phi,s^pR,s^p\mb)]. $$
\noindent
Finally Lemma~\ref{lat} provides a $q$ such that
$$ {}^y[{}^{y^{-1}}x,z]\in[E(\Phi,s^pR,s^p\ma),E(\Phi,s^pR,s^p\mb)]. $$
\noindent
Combining these $q$'s gives us Equation~\ref{hyoo}. The lemma now follows
from Equation~\ref{hyoo}, (C2) and an easy induction.
\end{proof}

%%%%%%%%%%%%%%%%%%%%%%%%%%%%%%%%%%%%%%%%%%%%%%%%%%%%%%%%

\section{Relative versus absolute, two parameter}

Now we are in a position to give another proof of Theorem~1.
\begin{The}\label{mainbis}
Let\/ $\Phi$ be a reduced irreducible root system,\/ $\rk(\Phi)\ge 2$.
Further, let\/ $R$ be a commutative ring, and\/ $\ma,\mb\unlhd R$ be
two ideals of\/ $R$. Then
$$ [E(\Phi,R,\ma,\mc),C(\Phi,R,\mb,\md)]=[E(\Phi,R,\ma,\mc),E(\Phi,R,\mb,\md)]. $$
\end{The}
\begin{proof}[Second proof of Theorem $1$]
By Lemma~\ref{lemma1} one has
$$ [E(\Phi,R,\ma,\mc),G(\Phi,R,\mb,\md)]=
[[E(\Phi,R),E(\Phi,R,\ma,\mc)],G(\Phi,R,\mb,\md)]. $$
\noindent
Since all subgroups here are normal in $G(\Phi,R)$, Lemma~\ref{lemma1}
implies
\begin{multline*}
[E(\Phi,R,\ma,\mc),G(\Phi,R,\mb,\md)]\le \\
 \le[E(\Phi,R,\ma,\mc),[E(\Phi,R),G(\Phi,R,\mb,\md)]]\cdot
[E(\Phi,R),[E(\Phi,R,\ma,\mc),G(\Phi,R,\mb,\md)]].
\end{multline*}
\noindent
Applying to the first factor on the right hand side the
{\it absolute\/} standard commutator formula \cite[Theorem~1]{HPV}
(= Lemma~\ref{thm1} above), we immediately see that it {\it coincides\/}
with
$[E(\Phi,R,\ma,\mc),E(\Phi,R,\mb,\md)]$.
\par
On the other hand, applying to the second factor on the right hand
Lemma~\ref{EUGU} followed by Lemma~\ref{nlln}, we can conclude that it is
{\it contained\/} in
$$
[E(\Phi,R),G(\Phi,\ma\mb,...)]=G(\Phi,\ma\mb,...)\le
[E(\Phi,R,\ma,\mc),E(\Phi,R,\mb,\md)]. $$
\noindent
Thus, the left hand side is contained in the right hand side, the
inverse inclusion is obvious.
\end{proof}

\par
Lemma~\ref{lemma1} asserts that the commutator of two elementary subgroups,
one of which is absolute, is itself an elementary subgroup. One can ask,
whether one always has
$$ [E(\Phi,R,\ma,\mc),E(\Phi,R,\mb,\md)]=E(\Phi,R,\ma\mb,...). $$
\noindent
Easy examples show that in general this equality may fail quite
spectacularly. In fact, when $\ma=\mb$, one can only conclude that
$$ E(\Phi,R,\ma^2)\le [E(\Phi,R,\ma),E(\Phi,R,\ma)]\le E(\Phi,R,\ma). $$
\noindent
with right bound attained for some {\it proper\/} ideals, such as an
ideal $\ma$ generated by an idempotent.
\par
Nevertheless, the true reason, why the equality in Lemma~3 holds, is
not the fact that one of the ideals $\ma$ or $\mb$ coincides with $R$,
but only the fact that $\ma$ and $\mb$ are comaximal.
\begin{The}
Let\/ $\Phi$ be a reduced irreducible root system,\/ $\rk(\Phi)\ge 2$.
Further, let\/ $R$ be a commutative ring, and\/ $\ma,\mb\unlhd R$ be
two comaximal ideals of\/ $R$, $\ma+\mb=R$, one has the following equality
$$ [E(\Phi,R,\ma,\mc),E(\Phi,R,\mb,\md)]=E(\Phi,R,\ma\mb,...). $$
\end{The}
\begin{proof}
First of all, observe that by Lemmas 3 and \ref{nllnnn} one has
\begin{multline*}
E(\Phi,R,\ma,\mc)=[E(\Phi,R,\ma,\mc),E(\Phi,R)]=\\
[E(\Phi,R,\ma,\mc),E(\Phi,R,\ma,\mc)\cdot E(\Phi,R,\mb,\md)].
\end{multline*}
\noindent
Thus,
\begin{multline*}
E(\Phi,R,\ma,\mc)\le [E(\Phi,R,\ma,\mc),E(\Phi,R,\ma,\mc)]\cdot
[E(\Phi,R,\ma,\mc),E(\Phi,R,\mb,\md)]\le\\
\le [E(\Phi,R,\ma,\mc),E(\Phi,R,\ma,\mc)]\cdot
E(\Phi,R,\ma\mb,...).
\end{multline*}
\noindent
Commuting this inclusion with $E(\Phi,R,\mb,\md)$, we see that
\begin{multline*}
[E(\Phi,R,\ma,\mc),E(\Phi,R,\mb,\md)]\le
[[E(\Phi,R,\ma,\mc),E(\Phi,R,\ma,\mc)],E(\Phi,R,\mb,\md)]\cdot\\
[E(\Phi,R,\ma\mb,...),E(\Phi,R,\mb,\md)].
\end{multline*}
\par
The absolute standard commutator formula, applied to the second
factor, shows that its is contained in
\begin{multline*}
[G(\Phi,R,\ma\mb,...),E(\Phi,R,\mb,\md)]\le\\
[G(\Phi,R,\ma\mb,...),E(\Phi,R)]=E(\Phi,R,\ma\mb,...).
\end{multline*}
\par
On the other hand, applying to the first factor Lemma \ref{EUGU},
and then again the absolute standard commutator formula, we see
that it is contained in
\begin{multline*}
[[E(\Phi,R,\ma,\mc),E(\Phi,R,\mb,\md)],E(\Phi,R,\ma,\mc)]\le \\
[G(\Phi,R,\ma\mb,...),E(\Phi,R,\ma,\mc)]\le\\
\le[G(\Phi,R,\ma\mb,...),E(\Phi,R)]=E(\Phi,R,\ma\mb,...).
\end{multline*}
\par
Together with Lemma \ref{nllnnn} this finishes the proof.
\end{proof}

Define the set $E^L(\Phi,\frac{t^l}{s^k}R,\frac{t^l}{s^k}\ma)$
as the set of product of $L$ or fewer elements of
$$ {}^{E^1\left(\Phi,\frac{t^l}{s^k}R\right)}
E^1\Big(\Phi,\frac{t^l}{s^k}\ma\Big). $$
\noindent
Note that if $x\in E(\Phi,R_s,\ma_s)$, then by Lemma~\ref{vasdes}, there are
positive integers $k$ and
$L$ such that $x\in E^L(\Phi,R/s^k,\ma/s^k)$. This
presentation will be used in Theorem~\ref{mainlem}. First we need to establish
the following lemma.

Consider the natural ring homomorphism $F_\mm:R\rightarrow R_\mm$ which
induces a group homomorphism $F_\mm: G(\Phi,R) \rightarrow G(\Phi,R_\mm)$.

By~\ref{dirlim}, we may reduce the problem to the case $\ma_s$ where
$s\in R\backslash\mm$. Thus $F_s(g)$ is a product of $\varepsilon$ and $h$
where $\varepsilon \in E(\Phi,R_s,\mb_s)$. Therefore by Lemma~\ref{vasdes},
$\varepsilon$ is a product of elementary matrices such that
$$ \varepsilon\in E^K\bigg(\Phi,\frac{R}{s^k},\frac{\mb}{s^k}\bigg) $$
\noindent
for some positive integers $k$ and $K$.